\newtheorem{definition}{Definition}[section]
\newtheorem{remark}{Remark}[section]
\newtheorem{defi}[definition]{Definition}
\newtheorem{prop}[definition]{Proposition}
\newtheorem{lem}[definition]{Lemma}
\newtheorem{thm}[definition]{Theorem}
\newtheorem{cor}[definition]{Corollary}
\newtheorem{corand}[definition]{Corollary and Definition}
\theoremstyle{definition}
\newtheorem{rem}[definition]{Remark}
\newtheorem{ex}[remark]{Example}
\newmdenv[
  topline=false,
  bottomline=false,
  skipabove=\topsep,
  skipbelow=\topsep
]{siderules}
\def\blfootnote{\xdef\@thefnmark{}\@footnotetext}
\title{A duality for Guichard nets}
\author{Gudrun Szewieczek}
\begin{document}
\maketitle
\begin{center}
\begin{minipage}{11cm}\small
\textbf{Abstract.} In this paper we study G-surfaces, a rather unknown surface class originally defined by Calapso, and show that the coordinate surfaces of a Guichard net are G-surfaces. Based on this observation, we present distinguished Combescure transformations that provide a duality for Guichard nets. Another class of special Combescure transformations is then used to construct a B\"acklund-type transformation for Guichard nets. In this realm a permutability theorem for the dual systems is proven. 
\end{minipage}
\vspace*{0.5cm}\\\begin{minipage}{11cm}\small
\textbf{MSC 2010.} 53A05, 53A30, 37K25, 37K35.
\end{minipage}
\vspace*{0.5cm}\\\begin{minipage}{11cm}\small
\textbf{Keywords.} Guichard nets; G-surfaces; triply orthogonal systems; Combescure transformation; Ribaucour transformation; B\"acklund transformation.
\end{minipage}
\end{center}
\  \\ 
%
%
\blfootnote{G.\,SZEWIECZEK,  gudrun@geometrie.tuwien.ac.at}
\blfootnote{TU Wien, Wiedner Hauptstr.\,8-10/104, 1040 Wien, Austria.}
\section{Introduction}
\noindent Various integrable surface classes were classically characterized by the existence of particular Combescure transformed dual surfaces: for example, isothermic surfaces via the Christoffel transforms and Guichard surfaces by the Guichard duality (\cite{calapso_guichard, christoffel, eisenhart_trafo_book, guichard_defi}). Based on this principle, in \cite{O_surface} the theory of O-surfaces was established, which provides a unified way to describe many classically known integrable surface classes.

Recently \cite{pember_lie_applicable}, a generalization of Demoulin's duality for $\Omega$-surfaces \cite{demoulin_associated} has been found, revealing that the Lie applicable class of $\Omega$-surfaces also appears as O-surfaces. Moreover, this concept also applies to higher dimensional hypersurfaces as the example of 3-dimensional conformally flat hypersurfaces shows (cf.\,\cite{duality, my_thesis}). 
\\\\Although triply orthogonal systems possess a rich transformation theory (\cite{bianchi_1918, bianchi_1919, Darboux_ortho, Ganzha_1996}), the concept of characterizing Combescure transformations to distinguish special subclasses of systems has not been exploited yet. 
In this paper we adopt this approach and present a duality for the subclass of Guichard nets that allows, together with particular associated systems, a characterization of Guichard nets in terms of their Combescure transforms. 
%
\\\\Guichard nets, that is, triply orthogonal systems with an induced metric fulfilling a special trace-zero-condition, were commonly studied by classical geometers around 1900 (\cite{Darboux_ortho, Guichard_small, salkowski}). Renewed interest arises from the observation that Guichard nets provide characterizing curvature line coordinates for 3-dimensional conformally flat hypersurfaces \cite{uhj_guichard}. This has created recent research interest at the junction of these triply orthogonal systems and hypersurfaces (\cite{zbMATH06902436, MR3747518, doReiFilho2018, MR3056177, MR3862798}). In particular, special subclasses such as cyclic Guichard nets \cite{cyclic_guichard, cyclic_guichard_eth}, Bianchi-type Guichard nets \cite{bianchi_type_guichard} and isothermic Guichard nets \cite{iso_guichard, my_thesis} have been investigated.
\\\\The duality for Guichard nets presented in this paper relies on the crucial fact that the coordinate surfaces of a Guichard net are G-surfaces. This surface class was introduced by Calapso in \cite{calapso_G} and then apparently fell into oblivion:  a surface $f$ is a G-surface if there exists a Combescure related associated surface $\hat{f}$ such that the following relation 
\begin{equation*}
cH_1^2H_2^2\Big(\frac{\kappa_1}{\hat{\kappa}_1}-\frac{\kappa_2}{\hat{\kappa}_2}\Big)^2=H_2^2 + \varepsilon H_1^2 \ \ \ \text{for some } \varepsilon \in \{ 0 , \pm 1 \} \ \text{and } c \in \mathbb{R}\setminus \{ 0 \}
\end{equation*}
between the principal curvatures and the coefficients $H_1^2$ and $H_2^2$ of the induced metric of $f$ holds. Note, however, that this relation is not symmetric in $f$ and $\hat{f}$, hence the Combescure transform $\hat{f}$ is in general not a G-surface with associated surface $f$. 

In Section \ref{section_G_surfaces} we demonstrate that for a G-surface there additionally exist Combescure transforms that are again G-surfaces and allow for a dual construction. This characterization immediately reveals that G-surfaces can be interpreted in the framework of O-surfaces and contain isothermic, as well as Guichard surfaces as subclasses.
\\\\These observations will be used in Section \ref{section_guichard_nets} to prove how G-surfaces arise in the context of triply orthogonal systems. In particular, we show that the coordinate surfaces of a Guichard net are G-surfaces.  As a consequence, there exist  associated and dual surfaces for the coordinate surfaces of a Guichard net that, suitably chosen, give rise to new triply orthogonal systems. It is proven that the triply orthogonal systems formed by the dual surfaces are again Guichard nets, hence this dual construction provides a method to construct new Guichard nets from a given one.   
\\\\Another way of obtaining new Guichard nets was given in \cite{bianchi_1918, bianchi_1919}: it has been shown that for any Guichard net there exist Ribaucour transformations that preserve the Guichard condition. We reconsider this B\"acklund-type transformation in Section~\ref{section_trafo} by using the fact that any Ribaucour transformation can be decomposed into two Combescure transformations and an inversion in the unit sphere. In this way, the Ribaucour transformation between two Guichard nets is traced back to the determination of particular Combescure transforms of a Guichard net. Moreover, we prove permutability between this Ribaucour transformation and the duality for Guichard nets.
\\\\\textit{Acknowledgements.} This work originated in the author`s doctoral thesis \cite{my_thesis} and was partially supported by the FWF/JSPS Joint Project grant  \textit{I1671-
N26 Transformations and Singularities}.
Special gratitude is expressed to Udo Hertrich-Jeromin for helpful comments  
and various geometric insights into Guichard nets. Moreover, the author would like to thank Fran Burstall, Mason Pember and Wolfgang Schief for fruitful and enjoyable discussions on G- and O-surfaces.  
%
%
\section{Preliminaries}
\noindent In this work we only deal with surfaces $f=(f_1, f_2, f_3):M^2 \rightarrow \mathbb{R}^3$ parametrized by curvature line coordinates, that is, the coordinates are orthogonal and conjugate. Equivalently, the coordinate functions $f_1, f_2$ and $f_3$, as well as the function $|f|^2$, are solutions of the \emph{point equation of the surface} \cite[Chap.\,I.2, IV.\,61]{eisenhart_trafo_book}
\begin{equation}\label{point_equ_surf}
\partial_{xy} \theta = \partial_y \log H_1 \partial_x \theta + \partial_x \log H_2 \partial_y \theta, 
\end{equation}
where the induced metric of the surface is denoted by $I=H_1^2 dx^2+ H_2^2 dy^2$.

\subsection{Combescure transformation of a surface} Classically \cite{comb, eisenhart_trafo_book}, two surfaces $f$ and $\hat{f}$ are related by a \emph{Combescure transformation} if they have parallel tangent planes along corresponding conjugate coordinate lines. However, note that a Combescure transformation generically preserves curvature line coordinates. Hence, in this paper we consider without loss of generality parallel tangent directions along curvature line coordinates. 

Analytically, any Combescure transform $\hat{f}$ of $f:M^2 \rightarrow \mathbb{R}^3$ is (up to translation) uniquely determined by 
\begin{equation}\label{def_comb}
d \hat{f} = h \partial_x f dx +  l \partial_y f dy,
\end{equation}
where $h, l:M^2\rightarrow \mathbb{R}$ are two functions fulfilling the compatibility conditions
\begin{equation}
\begin{aligned}\label{comp_comb}
\partial_y h &= (l-h) \partial_y \ln H_1,
\\\partial_x l &= (h-l) \partial_x \ln H_2.
\end{aligned}
\end{equation}
Equivalently \cite[Chap.\,I.4]{eisenhart_trafo_book}, any function $\varphi: M^2 \rightarrow \mathbb{R}$ fulfilling
\begin{equation}\label{eisenhart_equ}
\partial_{xy}\varphi + \partial_y \ln H_1 \partial_x \varphi + \partial_x \ln H_2 \partial_y \varphi + \varphi \partial_{xy} \ln (H_1 H_2)=0
\end{equation}
gives rise to a pair of functions $h$ and $l$ described by
\begin{equation}\label{comb_from_phi}
\begin{aligned}
\partial_x h &= \phantom{-} \varphi \partial_x \ln (H_2\varphi),
\\\partial_y h &= -\varphi \partial_y \ln H_1,
\\l &= h -\varphi,
\end{aligned} 
\end{equation}
that fulfills equations (\ref{comp_comb}) and therefore defines by (\ref{def_comb}) a Combescure transformation of $f$. 

Since the functions $h$ and $l$ are uniquely determined up to the same additive constant, a function $\varphi$ satisfying condition (\ref{eisenhart_equ}) induces by (\ref{comb_from_phi}) a 1-parameter family of Combescure transforms of $f$.
\\\\Note that, in particular, the normals of $f$ and a Combescure transform $\hat{f}$ coincide up to orientation, that is, $n=\delta\hat{n} $, where $\delta \in \{ \pm 1 \}$ depending on whether the product $hl$ is positive or negative. Hence, denoting the principal curvatures of $f$ by $\kappa_1$ and $\kappa_2$, the principal curvatures $\hat{\kappa}_1$ and $\hat{\kappa}_2$ of the Combescure transform $\hat{f}$ are given by 
\begin{equation*}
\hat{\kappa}_1=\delta\frac{\kappa_1}{h} \ \ \ \text{and } \ \ \ \hat{\kappa}_2=\delta\frac{\kappa_2}{l}.
\end{equation*}
We also allow the functions $h$ or $l$ to vanish identically, such that $\hat{f}$ degenerates to a curve or a point. In these cases we consider the corresponding radii of principal curvature of $\hat{f}$ to be zero.
%
%
%
\subsection{Triply orthogonal systems and their Combescure transforms}\label{section_tos}
In what follows, we give a brief introduction to triply orthogonal systems and summarize some facts that will become important later. We then focus on the Combescure transformation for triply orthogonal systems and prove two results that show how these transforms can be constructed. 

For more details and a wide range of beautiful results in this area, we refer to the rich literature on triply orthogonal systems, for example \cite{bianchi_lezioni,Darboux_ortho, cyclic_guichard_eth, salkowski, weatherburn}.
\\\\\textit{Agreement.} Throughout the paper we use the following convention for the indices in the realm of triply orthogonal systems: if only one index $i$ or two indices $i$ and $j$ appear in an equation, then $i \neq j \in \{1,2,3 \}$. If there are three indices $(i, j, k)$ involved, then they take the values $(1,2,3)$ or a cyclic permutation of them. 

To avoid useless indices, we sometimes change the notations of variables: $x=x_1$, $x_2=y$ and $x_3=z$. For the partial derivatives we use the abbreviations $\partial_i=\partial_{x_i}$ and $\partial_{ij}=\partial_{x_i}\partial_{x_j}$.

\bigskip 

Let $U$ be an open and connected subset of $\mathbb{R}^3$, then $f:U\rightarrow \mathbb{R}^3$ is a \emph{triply orthogonal system} if $\text{det}(\partial_x f, \partial_y f, \partial_z f)\neq 0$ and
\begin{equation*}
(\partial_i f, \partial_j f)=0,
\end{equation*}
where $(.,.)$ denotes the usual inner product on $\mathbb{R}^3$. The unit normals $N_i$ of the coordinate surfaces $x_i=const$ are given by 
\begin{equation*}
N_i:=\frac{\partial_j f \times \partial_k f}{|\partial_j f \times \partial_k f|}
\end{equation*}
and the functions $H_1, H_2, H_3: U \rightarrow \mathbb{R}$ defined by
\begin{equation*}
\partial_i f = H_i N_i
\end{equation*}
are called the \emph{Lam\'{e} coefficients} of $f$. Since 
\begin{equation*}
\text{det} (\partial_x f, \partial_y f, \partial_z f)= H_1 H_2 H_3,
\end{equation*}
the Lam\'{e} coefficients never vanish on $U$. The induced metric of a triply orthogonal system is then given by
\begin{equation}\label{induced_metric}
I=(df,df)=H_1^2 dx^2+H_2^2 dy^2+H_3^2 dz^2.
\end{equation}

\bigskip

If $f$ is a triply orthogonal system, then the functions $(f_1,f_2,f_3)=f$ are solutions of the point equations of any surface family $x_k=const$, 
\begin{align}\label{point_TOS}
\partial_{ij} f = \partial_j \ln H_i \partial_i f + \partial_i \ln H_j \partial_j f,
\end{align}
and fulfill the Gauss equation
\begin{equation*}
\partial_{i}^2f= \partial_i \ln H_i \partial_i f - \frac{H_i}{H_j^2}\partial_j H_i \partial_j f - \frac{H_i}{H_k^2}\partial_k H_i \partial_k f.
\end{equation*}
\bigskip 

Moreover, the Lam\'{e} coefficients $H_1, H_2$ and $H_3$ satisfy the first and second system of \emph{Lam\'{e}'s equations}
\begin{align}
\partial_{ij} H_k &= \partial_j \ln H_i \partial_i H_k + \partial_i \ln H_j \partial_j H_k,\label{lame_first}
\\0&=\frac{1}{H_k^2}\partial_k H_i \partial_k H_j + \partial_j \Big\{ \frac{1}{H_j} \partial_j H_i\Big\} + \partial_i \Big\{ \frac{1}{H_i} \partial_i H_j \Big\}.\label{lame_second}
\end{align}
If we introduce Darboux's \emph{rotational coefficients}
\begin{equation*}
\beta_{ij}:=\frac{1}{H_i}\partial_i H_j,
\end{equation*}
then Lam\'{e}'s systems (\ref{lame_first}) and (\ref{lame_second}) reduce to first order equations
\begin{align*}
0&=\partial_k \beta_{ij}- \beta_{ik}\beta_{kj},
\\0&=\partial_i \beta_{ij} + \partial_j \beta_{ji} + \beta_{ki}\beta_{kj}
\end{align*}
and the unit normals $N_1, N_2$ and $N_3$ satisfy the system
\begin{equation}\label{direction_cos}
\begin{aligned}
\partial_j N_i &= \beta_{ij} N_j,
\\\partial_i N_i &= - \beta_{ji} N_j - \beta_{ki} N_k,
\\(N_i,N_j)&=\delta_{ij}.
\end{aligned}
\end{equation}
Conversely, an orthogonal metric of the form (\ref{induced_metric}) that satisfies Lam\'{e}'s equations gives rise to a triply orthogonal system, which is uniquely determined up to Euclidean motions: to recover a parametrization $f:U \rightarrow \mathbb{R}^3$ from a prescribed orthogonal metric $I$, we first compute the rotational coefficients $\beta_{ij}$ and construct a set of suitable normals $N_1, N_2$ and $N_3$ from system (\ref{direction_cos}). Then a parametrization $f:U \rightarrow \mathbb{R}^3$ of the system with induced metric $I$ is obtained by integrating
\begin{equation}\label{recover_f}
df= H_1 N_1 dx + H_2 N_2 dy + H_3 N_3 dz. 
\end{equation}  
Note that the integrability of the system (\ref{direction_cos}) and equation (\ref{recover_f}) is guaranteed by Lam\'{e}'s equations and the construction is unique up to Euclidean motions.

Thus, if the ambiguity of Euclidean motions does not effect the geometry we are interested in, we talk about the induced metric instead of the explicit parametrization.

\bigskip

Recall that, by Dupin's Theorem, any two coordinate surfaces of two different families intersect along a curve which is a curvature line for both surfaces. This immediately reveals information about the geometry of the three families of coordinate surfaces: the induced metrics $I_i$ for the family of coordinate surfaces $x_i=const$ are given by
\begin{equation*}
I_i=H_j^2 dx_j^2+H_k^2 dx_k^2
\end{equation*}
and therefore the two principal curvatures of these coordinate surfaces read
\begin{equation*}
\kappa_{ij}=-\frac{\beta_{ij}}{H_j} \ \ \ \text{and} \ \ \ \kappa_{ik}=-\frac{\beta_{ik}}{H_k}.
\end{equation*} 
Hence, due to Lam\'{e}'s equations (\ref{lame_first}), the metric coefficient $H_i|_{x_i=const}$ is a solution of the point equation (\ref{point_TOS}) of the coordinate surface $x_i=const$.
%
%
\\\\Remarkable subclasses of triply orthogonal systems emerge from induced metrics whose traces fulfill a certain condition (cf.\,\cite{bianchi_1918, Darboux_ortho, Guichard_small}). We use the following terminology: 
%
%
\begin{defi}[{\cite[\S 64]{Guichard_small}}]
A \emph{$\chi$-system} is a triply orthogonal system such that the trace with respect to the Minkowski-metric of the induced metric $I$ satisfies
\begin{equation}\label{trace_chi}
H_1^2+H_2^2-H_3^3=\chi.
\end{equation}
In particular, a $0$-system is called a \emph{Guichard net}.
\end{defi}
Another distinguished subclass in this realm consists of $\alpha^2|f|^2$-systems, where $\alpha \in \mathbb{R}$ (see \cite[Chap.\,X]{Darboux_ortho}). As well as Guichard nets, this class is invariant under inversions: let $(f, H_1,H_2, H_3)$ be an $\alpha^2|f|^2$-system, then
\begin{align*}
f &\rightarrow f':=\frac{f}{|f|^2}
\\H_i^2 &\rightarrow {H'_i}^{2}:=\frac{H_i^2}{|f|^4} 
\end{align*}
and, therefore, $f'$ is an $\alpha^2|f'|^2$-system.
\\\\Better insights into the geometry of a triply orthogonal system is often obtained by studying the coordinate surfaces. Thus, many geometric ideas known from surface theory were classically carried over to triply orthogonal systems by requiring a geometric property for any coordinate surface of the system: for example, systems consisting of isothermic surfaces or of constant Gaussian curvature surfaces. Also the rich transformation theory for triply orthogonal systems is based on this concept. 

The Combescure transformation between two triply orthogonal systems is named after its first appearance in \cite{comb}:

\begin{defi} \label{def_combescure}
Two triply orthogonal systems $f, \hat{f}: U \rightarrow \mathbb{R}^3$ are related by a \emph{Combescure transformation} if any two corresponding coordinate surfaces of $f$ and $\hat{f}$ are Combescure transforms of each other.
\end{defi}
\noindent Since the rotational coefficients $\beta_{ij}$ determine the normal directions of the coordinate surfaces of a triply orthogonal system, two systems are related by a Combescure transformation if and only if they share the same rotational coefficients (cf.\,\cite[\S 5]{salkowski}).

The following lemma gives a criterion for three functions to define a Combescure transformation of a given parametrized triply orthogonal system:
%
%
\begin{prop}\label{combescure}
Let $f$ be a triply orthogonal system with Lam\'{e} coefficients $(H_i)_i$, then 
\begin{equation}\label{equ_comb}
d\hat{f} = h_1 \partial_x f dx + h_2 \partial_y f dy + h_3 \partial_z f dz 
\end{equation}
defines a Combescure transform $\hat{f}$ of $f$ if and only if
\begin{equation}
\partial_i h_j = (h_i-h_j)\partial_i \ln H_j.\label{comb_cond}
\end{equation}
\end{prop}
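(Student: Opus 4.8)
The plan is to split the claim into two parts: first, that the prescription (\ref{equ_comb}) actually integrates to a well-defined map $\hat{f}$ exactly when (\ref{comb_cond}) holds; and second, that any $\hat{f}$ obtained this way is automatically a Combescure transform in the sense of Definition \ref{def_combescure}. The converse implication will then be immediate, since a Combescure transform given by (\ref{equ_comb}) in particular exists.

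For the first part I would observe that the $\mathbb{R}^3$-valued one-form $\omega:=h_1\partial_x f\,dx+h_2\partial_y f\,dy+h_3\partial_z f\,dz$ admits a (translation-unique) primitive $\hat f$ if and only if it is closed, i.e. $\partial_j(h_i\partial_i f)=\partial_i(h_j\partial_j f)$ for all $i\neq j$. Expanding with the Leibniz rule and using the point equation (\ref{point_TOS}), $\partial_{ij}f=\partial_j\ln H_i\,\partial_i f+\partial_i\ln H_j\,\partial_j f$, to eliminate the second derivatives, the closedness condition takes the form
\begin{equation*}
\big(\partial_j h_i-(h_j-h_i)\partial_j\ln H_i\big)\,\partial_i f=\big(\partial_i h_j-(h_i-h_j)\partial_i\ln H_j\big)\,\partial_j f.
\end{equation*}
Since $\{\partial_x f,\partial_y f,\partial_z f\}$ is pointwise a basis of $\mathbb{R}^3$ (the determinant being nonzero), this is equivalent to the vanishing of both bracketed coefficients, and each of these vanishings is precisely an instance of (\ref{comb_cond}) — the two are exchanged by swapping $i$ and $j$. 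Hence $\omega$ is closed if and only if (\ref{comb_cond}) holds for all ordered pairs $i\neq j$.

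For the second part I would use $\partial_i\hat f=h_i\partial_i f=h_iH_iN_i$: orthogonality of the coordinate net of $\hat f$ follows at once from $(N_i,N_j)=\delta_{ij}$, and $\det(\partial_x\hat f,\partial_y\hat f,\partial_z\hat f)=h_1h_2h_3\,H_1H_2H_3$, which is nonvanishing unless some $h_i$ vanishes identically — in that degenerate situation I would appeal to the conventions fixed in the preliminaries. Restricting (\ref{equ_comb}) to a coordinate surface $x_k=\mathrm{const}$ yields exactly the defining equation (\ref{def_comb}) of a surface Combescure transform with the pair $h_i,h_j$, and the surface compatibility conditions (\ref{comp_comb}) for the induced metric $H_i^2dx_i^2+H_j^2dx_j^2$ are precisely two of the equations (\ref{comb_cond}); thus each coordinate surface of $\hat f$ is a Combescure transform of the corresponding one of $f$, which is the content of Definition \ref{def_combescure}. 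I do not anticipate a genuine obstacle here: the heart of the argument is the closedness computation together with the pointwise linear independence of $\partial_x f,\partial_y f,\partial_z f$, and the only points requiring attention are the index bookkeeping in matching the two coefficient equations to (\ref{comb_cond}) and the handling of degenerate cases where some $h_i\equiv 0$.
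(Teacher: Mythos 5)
Your argument is correct, but it takes a different route from the paper's. The paper's proof is a one-liner that leans on the characterization stated just before Proposition \ref{combescure}: two systems are Combescure related iff they share the same rotational coefficients. It sets $\hat{H}_i=h_iH_i$, computes $\beta_{ij}-\hat{\beta}_{ij}=\frac{H_j}{h_iH_i}\bigl\{(h_i-h_j)\partial_i\ln H_j-\partial_i h_j\bigr\}$, and reads off (\ref{comb_cond}); the existence of $\hat f$ is left implicit. You instead verify Definition \ref{def_combescure} from scratch: closedness of the $\mathbb{R}^3$-valued form via the point equation (\ref{point_TOS}) and linear independence of $\partial_xf,\partial_yf,\partial_zf$, followed by restriction to coordinate surfaces to match (\ref{def_comb}) and (\ref{comp_comb}). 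The underlying Leibniz computation is the same in both cases (your two bracketed coefficients are exactly the paper's $\beta_{ij}-\hat\beta_{ij}$ up to the nonzero factor $H_j/(h_iH_i)$), but your version is more self-contained — it does not import the rotational-coefficient criterion cited from Salkowski and it makes explicit that (\ref{comb_cond}) is already forced by the mere integrability of (\ref{equ_comb}), which is why the ``only if'' direction is free. What the paper's phrasing buys in exchange is brevity and the immediate byproduct $\hat\beta_{ij}=\beta_{ij}$, which is the form in which the result is reused later. Two minor points, neither fatal and both consistent with the paper's own conventions: passing from closedness to exactness tacitly assumes $U$ simply connected (or that one works locally), and the degenerate case $h_i\equiv 0$ takes $\hat f$ outside the strict definition of a triply orthogonal system, as you note.
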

\begin{proof}
Let us denote the rotational coefficients of $\hat{f}$ by $\hat{\beta}_{ij}:=\frac{1}{h_iH_i}\partial_i (h_j H_j)$, then
\begin{align*}
\beta_{ij}-\hat{\beta}_{ij}=\frac{H_j}{h_iH_i} \Big\{ (h_i-h_j)\partial_i \ln H_j - \partial_i h_j \Big\}.
\end{align*}
Hence, $\beta_{ij}=\hat{\beta}_{ij}$ if and only if the equations (\ref{comb_cond}) hold, which proves the claim.
\end{proof}
Thus, for two given metrics $I=\sum_{i=1}^3 H_i^2 dx_i^2$ and $\hat{I}=\sum_{i=1}^3 (h_i H_i)^2 dx_i^2$, where the functions $h_1, h_2$ and $h_3$ satisfy equations (\ref{comb_cond}), we can find two parametrizations $f$ and $\hat{f}$ which have these metrics as induced metrics and are related by a Combescure transformation. Therefore, we also say that two such induced metrics are related by a \emph{Combescure transformation}.
%
%
\\\\To conclude this section, we prove a rather technical result, which will become important later. Suppose we start with a triply orthogonal system and Combescure transform each coordinate surface. Then the following proposition gives a criterion in which cases the Combescure transformed surfaces constitute again a triply orthogonal system:
\begin{prop}\label{tos_comb_technical}
Let $(f, H_1, H_2, H_3)$ be a triply orthogonal system and let $\varphi_1, \varphi_2$ and $\varphi_3$ denote three functions that define by (\ref{comb_from_phi}) Combescure transformations for the corresponding family of coordinate surfaces $x_i =const$. Then, there exists a Combescure transformed triply orthogonal system $\hat{f}$ of $f$ such that the Combescure transformations between the coordinate surfaces are induced by the corresponding functions $\varphi_1$, $\varphi_2$ and $\varphi_3$ if and only if
\begin{equation}\label{tos_comb_phi}
\varphi_1+ \varphi_2+\varphi_3=0 \ \ \text{and} \ \ \partial_j \varphi_j = \varphi_i \partial_j \ln H_k + \varphi_k \partial_j \ln H_i.
\end{equation}
\end{prop}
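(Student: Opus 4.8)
The goal is to characterize when three separately-chosen Combescure deformations of the coordinate surfaces of $(f,H_1,H_2,H_3)$ assemble into a single Combescure transform of the whole triply orthogonal system. By Proposition~\ref{combescure}, such a global transform $\hat f$ corresponds to a triple of functions $h_1,h_2,h_3$ on $U$ satisfying $\partial_i h_j = (h_i-h_j)\partial_i\ln H_j$; the coordinate-surface Combescure data are, by the surface-level formulas (\ref{comb_from_phi}), governed on each leaf $x_i=const$ by the functions $\varphi_i$ via relations of the type $l = h-\varphi$. So the plan is to translate the leafwise Combescure data into conditions on a would-be global triple $(h_1,h_2,h_3)$, match them up, and read off exactly the compatibility constraints (\ref{tos_comb_phi}).

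First I would fix notation: on the coordinate surface $x_i=const$, with induced metric $H_j^2dx_j^2+H_k^2dx_k^2$, the function $\varphi_i$ produces (up to an additive constant) a pair of functions, say $h^{(i)}_j$ and $h^{(i)}_k$, with $h^{(i)}_k = h^{(i)}_j-\varphi_i$ and satisfying the surface compatibility equations (\ref{comp_comb}) in the variables $x_j,x_k$. For the three leafwise transforms to glue into a global Combescure transform we need a single triple $(h_1,h_2,h_3)$ on $U$ whose restriction to $x_i=const$ agrees (up to the free constant) with $(h^{(i)}_j,h^{(i)}_k)$ for each $i$. Unwinding this, the difference $h_k-h_j$ restricted to the leaf $x_i=const$ must equal $\varphi_i$ up to a constant; absorbing the three constants appropriately, the natural Ansatz is $h_i-h_j = -\varphi_k$, i.e. the differences of the $h$'s along each leaf are the prescribed $\varphi$'s. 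This forces the identity $(h_i-h_j)+(h_j-h_k)+(h_k-h_i)=0$, which on matching becomes exactly $\varphi_1+\varphi_2+\varphi_3=0$; this is the first condition and it is essentially forced by the cocycle nature of the differences.

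Next, with $h_i-h_j=-\varphi_k$ in hand, I would feed this into the global Combescure equations (\ref{comb_cond}): $\partial_i h_j = (h_i-h_j)\partial_i\ln H_j = -\varphi_k\,\partial_i\ln H_j$. Taking suitable combinations — e.g. $\partial_j(h_i-h_k)=\partial_j h_i - \partial_j h_k = (h_j-h_i)\partial_j\ln H_i - (h_j-h_k)\partial_j\ln H_k = -\varphi_k\partial_j\ln H_i - (-(\varphi_i))\partial_j\ln H_k$, hmm, let me instead directly compute $\partial_j\varphi_j$. Since $\varphi_j = h_k-h_i$ (reading off the convention consistently from the cyclic labeling), one gets $\partial_j\varphi_j = \partial_j h_k - \partial_j h_i = (h_j-h_k)\partial_j\ln H_k - (h_j-h_i)\partial_j\ln H_i$. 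Now $h_j-h_k = \varphi_i$ and $h_j-h_i = -\varphi_k$, so $\partial_j\varphi_j = \varphi_i\,\partial_j\ln H_k + \varphi_k\,\partial_j\ln H_i$, which is precisely the second relation in (\ref{tos_comb_phi}). Thus these two conditions are necessary.

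For sufficiency, I would run the argument in reverse: assuming (\ref{tos_comb_phi}), define $h_1,h_2,h_3$ by integrating the overdetermined system $\partial_i h_j = -\varphi_k\,\partial_i\ln H_j$ together with the diagonal equations coming from (\ref{comb_from_phi}) (the $\partial_i h_i$-type formula), and check that the mixed partials agree — this is where I expect the real work to lie. The integrability computation will need the surface point equations (\ref{point_TOS}), Lamé's first system (\ref{lame_first}), the hypothesis that each $\varphi_i$ satisfies its leafwise Eisenhart-type equation (\ref{eisenhart_equ}), and crucially the two constraints (\ref{tos_comb_phi}) and their derivatives to cancel the remaining terms. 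Once $(h_1,h_2,h_3)$ is constructed and shown to satisfy (\ref{comb_cond}), Proposition~\ref{combescure} furnishes the global Combescure transform $\hat f$, and by construction its restriction to each leaf $x_i=const$ is the Combescure transform induced by $\varphi_i$, completing the proof. The main obstacle is the sufficiency direction: verifying that the prescribed differential data for $(h_1,h_2,h_3)$ is consistent, i.e. that all mixed second derivatives match, which is a somewhat delicate bookkeeping exercise balancing the Eisenhart equations for the $\varphi_i$ against Lamé's equations and the trace-type constraint $\sum\varphi_i=0$.
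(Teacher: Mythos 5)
Your plan follows the paper's proof essentially verbatim: both arguments reduce the question to gluing the leafwise pairs produced by each $\varphi_i$ into a single global triple $(h_1,h_2,h_3)$ satisfying (\ref{comb_cond}), extract $\varphi_1+\varphi_2+\varphi_3=0$ from the cocycle identity for the differences $h_k-h_i=\varphi_j$ and the second condition of (\ref{tos_comb_phi}) from matching the $\partial_j$-derivatives, and for sufficiency verify integrability of the resulting overdetermined first-order system for each $h_j$ using the leafwise Eisenhart equations together with Lam\'e's first system --- exactly the (likewise unspelled-out) computation the paper invokes. The only blemish is the sign wobble between your Ansatz $h_i-h_j=-\varphi_k$ and the convention $h_i-h_j=\varphi_k$ forced by $l=h-\varphi$ in (\ref{comb_from_phi}), which you silently correct before your final computation.
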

\begin{proof}
By Proposition \ref{combescure}, any Combescure transform $\hat{f}$ of $f$ can be described by three functions $h_1, h_2$ and $h_3$ satisfying (\ref{comb_cond}). On the other hand, on the level of coordinate surfaces, any function $\varphi_j$ gives rise to functions $h_i^j$ and $h_k^j$ fulfilling (cf.\,conditions (\ref{comb_from_phi}))
\begin{equation*}
\begin{aligned}
\partial_k h_i^j&=\phantom{-}\varphi_j \partial_k \ln H_i
\\\partial_i h_i^j&=-\varphi_j \partial_i \ln (H_k \varphi_j)
\\\partial_k h_k^j&=\phantom{-}\varphi_j \partial_k \ln (H_i \varphi_j)
\\\partial_i h_k^j&=-\varphi_j \partial_i \ln H_k.
\end{aligned}
\end{equation*}
Therefore, we obtain a Combescure transformed triply orthogonal system $\hat{f}$ of $f$ with the required property if and only if $h_j^i=h_j^k$.

Thus, if we assume that the conditions (\ref{tos_comb_phi}) are satisfied, then 
\begin{equation}\label{comb_equ_phi}
\partial_j h_j^i=\varphi_i \partial_j \ln(H_k \varphi_i)=-\varphi_k \partial_j\ln(H_i\varphi_k)=\partial_jh_j^k
\end{equation} 
hold. Hence, it remains to show that the system
\begin{equation}\label{system_comb_technical}
\begin{aligned}
\partial_j h_j &=\phantom{-}\varphi_i \partial_j \ln(H_k \varphi_i)
\\\partial_k h_j &= - \varphi_i\partial_k \ln H_j
\\\partial_i h _j &= \phantom{-}\varphi_k \partial_i \ln H_j
\end{aligned}
\end{equation}
is indeed integrable. Since $\varphi_i$ and $\varphi_k$ induce Combescure transformations, we obtain by equations (\ref{comb_from_phi}) the integrability conditions 
\begin{equation*}
\partial_j(\partial_k h_j)=\partial_k(\partial_j h_j) \ \ \text{and} \ \ \partial_i(\partial_j h_j)=\partial_j(\partial_i h_j).
\end{equation*}
The remaining integrability condition $\partial_i(\partial_k h_j)=\partial_k(\partial_i h_j)$ then follows from the assumptions (\ref{tos_comb_phi}) and the first system of Lam\'e's equations of $f$.

Conversely, suppose that $h_j^i=h_j^k$, then equation (\ref{comb_equ_phi}) is satisfied
and we conclude that the conditions (\ref{comb_from_phi}) hold.
\end{proof}
%
%
%
\section{G-surfaces and G-systems}\label{section_G_surfaces}
\subsection{G-surfaces}
We present, compared to the definition given by Calapso in \cite[\S VI]{calapso_G}), a slightly generalized definition of G-surfaces, where we allow for  three different subclasses depending on the choice of $\varepsilon \in \{0, \pm 1\}$:
\begin{defi}
A surface $f: M^2 \rightarrow \mathbb{R}^3$ is called a \emph{G-surface} if there exist curvature line coordinates with induced metric $I=H_1^2 dx^2+H_2^2dy^2$ and a Combescure transform $\hat{f}$ of $f$ such that
\begin{equation}\label{cond_G}
cH_1^2H_2^2\Big(\frac{\kappa_1}{\hat{\kappa}_1}-\frac{\kappa_2}{\hat{\kappa}_2}\Big)^2=H_2^2 + \varepsilon H_1^2
\end{equation}
for some $\varepsilon \in \{ 0 , \pm 1 \}$, $c \in \mathbb{R}\setminus \{ 0 \}$ and $\kappa_1, \kappa_2, \hat{\kappa}_1$ and $\hat{\kappa}_2$ the principal curvatures of $f$ and $\hat{f}$, respectively.

The surface $\hat{f}$ will be called an \emph{associated surface of $f$}.
\end{defi} 
Obviously, the \emph{G-condition} (\ref{cond_G}) depends on the choice of particular curvature line coordinates. However, from the definition we immediately deduce that a surface given in (arbitrary) curvature line coordinates $(x,y)$ is a G-surface if and only if there exist functions $\chi_1=\chi_1(x)$ and $\chi_2=\chi_2(y)$ such that
\begin{equation}\label{G_repara}
c\Big(\frac{\kappa_1}{\hat{\kappa}_1}-\frac{\kappa_2}{\hat{\kappa}_2}\Big)^2=\frac{1}{\chi_1^2 H_1^2} + \frac{\varepsilon}{\chi_2^2 H_2^2}.
\end{equation}
\ \\\\As already observed in \cite[\S VI (89)]{calapso_G}), a G-surface admits a special solution of its point equation:
\begin{prop}\label{G_point_invariant}
A surface $f:M^2 \rightarrow \mathbb{R}^3$ is a G-surface if and only if there exist curvature line coordinates such that the function $\sqrt{H_2^2 + \varepsilon H_1^2}$ is a scalar solution of its point equation (\ref{point_equ_surf}).
\end{prop}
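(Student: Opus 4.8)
The plan is to work directly with the defining data of a Combescure transform, as encoded in (\ref{def_comb})--(\ref{comb_from_phi}), and translate the G-condition (\ref{cond_G}) into a statement about a scalar solution of the point equation (\ref{point_equ_surf}). Recall that for a Combescure transform $\hat f$ induced by functions $h, l$ we have $\hat\kappa_1 = \delta \kappa_1/h$ and $\hat\kappa_2 = \delta\kappa_2/l$, hence $\kappa_1/\hat\kappa_1 = \delta h$ and $\kappa_2/\hat\kappa_2 = \delta l$. Therefore the G-condition (\ref{cond_G}) reads
\begin{equation*}
c H_1^2 H_2^2 (h-l)^2 = H_2^2 + \varepsilon H_1^2.
\end{equation*}
Writing $\varphi = h - l$ as in (\ref{comb_from_phi}), this is exactly $\sqrt{c}\, H_1 H_2 |\varphi| = \sqrt{H_2^2 + \varepsilon H_1^2}$, so the candidate scalar solution of the point equation will be $\theta := \pm\sqrt{c}\, H_1 H_2 \varphi$ (equivalently $\sqrt{H_2^2+\varepsilon H_1^2}$ up to sign and the nonzero constant $\sqrt c$).

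First I would record the precise relationship between the Eisenhart equation (\ref{eisenhart_equ}) for a function $\varphi$ and the point equation (\ref{point_equ_surf}) for the function $\theta = H_1 H_2 \varphi$. A direct substitution (differentiating the product and using (\ref{point_equ_surf}) for $H_1$ and $H_2$ separately, which hold because $H_i|_{x_i=const}$ solves the point equation of the corresponding surface — stated in the excerpt) should show that $\varphi$ satisfies (\ref{eisenhart_equ}) if and only if $\theta = H_1 H_2\varphi$ satisfies (\ref{point_equ_surf}). This is the classical Eisenhart/Weingarten correspondence between solutions of the point equation and of its adjoint; I would verify it as a short computation rather than cite it, to keep the argument self-contained. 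Given this, the two directions of the proposition become nearly immediate.

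For the forward direction: if $f$ is a G-surface with associated surface $\hat f$, pick curvature line coordinates realizing (\ref{cond_G}). The Combescure transform $\hat f$ is governed by some $\varphi$ satisfying (\ref{eisenhart_equ}), and as computed above $c H_1^2 H_2^2 \varphi^2 = H_2^2 + \varepsilon H_1^2$. Then $\theta = H_1 H_2 \varphi$ solves the point equation by the correspondence, and $\theta = \pm c^{-1/2}\sqrt{H_2^2+\varepsilon H_1^2}$; since the point equation is linear, $\sqrt{H_2^2+\varepsilon H_1^2}$ itself is a solution. Conversely, if in some curvature line coordinates $\psi := \sqrt{H_2^2+\varepsilon H_1^2}$ solves (\ref{point_equ_surf}), set $\varphi := \psi/(H_1 H_2)$; by the correspondence $\varphi$ solves (\ref{eisenhart_equ}), hence by (\ref{comb_from_phi}) it induces a Combescure transform $\hat f$ of $f$, and unwinding the principal-curvature formulas gives $H_1^2 H_2^2(\kappa_1/\hat\kappa_1 - \kappa_2/\hat\kappa_2)^2 = H_1^2 H_2^2 \varphi^2 = \psi^2 = H_2^2 + \varepsilon H_1^2$, i.e.\ (\ref{cond_G}) holds with $c = 1$. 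One must note that $\varphi$ is not identically zero (since $\psi > 0$ where $H_i \ne 0$, at least when $\varepsilon \in \{0,1\}$; for $\varepsilon = -1$ one works on the open set where $H_2^2 - H_1^2 \ne 0$), so the transform is genuine and the division by $\hat\kappa_i$ is legitimate, using the excerpt's convention when $h$ or $l$ vanishes.

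The main obstacle is the computational lemma identifying (\ref{eisenhart_equ}) for $\varphi$ with (\ref{point_equ_surf}) for $H_1 H_2 \varphi$: one has to expand $\partial_{xy}(H_1H_2\varphi)$, substitute the point equations satisfied by $H_1$ and $H_2$ (as coefficient functions of the respective coordinate surfaces of the ambient triply orthogonal picture, or directly as consequences of (\ref{point_equ_surf})), and check that all the extra terms reorganize precisely into the curvature term $\varphi\,\partial_{xy}\ln(H_1H_2)$ appearing in (\ref{eisenhart_equ}). A secondary subtlety is the dependence of the G-condition on the choice of curvature line coordinates: the cleaner reparametrization-invariant form (\ref{G_repara}) shows the statement is meaningful, and I would phrase the proof so that the existence of \emph{some} good coordinates is all that is used, matching the "there exist curvature line coordinates" in both the definition and the proposition.
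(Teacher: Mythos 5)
Your overall architecture coincides with the paper's: rewrite the G-condition as $(h-l)^2=(H_2^2+\varepsilon H_1^2)/(cH_1^2H_2^2)$, identify $\varphi=h-l$, and use the fact that Combescure transforms are governed by solutions of (\ref{eisenhart_equ}). The gap is in your central lemma. The asserted general correspondence ``$\varphi$ satisfies (\ref{eisenhart_equ}) if and only if $\theta=H_1H_2\varphi$ satisfies (\ref{point_equ_surf})'' is false. Substituting $\varphi=\theta/(H_1H_2)$ into (\ref{eisenhart_equ}) and multiplying by $H_1H_2$ yields
\begin{equation*}
\partial_{xy}\theta-\partial_y\ln H_2\,\partial_x\theta-\partial_x\ln H_1\,\partial_y\theta+\bigl(\partial_x\ln H_1\,\partial_y\ln H_2-\partial_x\ln H_2\,\partial_y\ln H_1\bigr)\theta=0,
\end{equation*}
which differs from (\ref{point_equ_surf}) in the first-order terms (the roles of $H_1$ and $H_2$ are interchanged) and by a zeroth-order term; the two operators are not even gauge-equivalent unless $\partial_{xy}\ln(H_1/H_2)=0$. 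Concretely, for a surface of revolution with $H_1=1$, $H_2=r(x)$, the function $\varphi=y/r$ solves (\ref{eisenhart_equ}) while $H_1H_2\varphi=y$ does not solve (\ref{point_equ_surf}). Your proposed justification also misreads the paper: the statement that $H_i|_{x_i=const}$ solves the point equation of the coordinate surface $x_i=const$ concerns the \emph{transversal} Lam\'e coefficient of a triply orthogonal system, whose point equation is built from $H_j$ and $H_k$; it does not say that the metric coefficients $H_1,H_2$ of an arbitrary surface $f$ solve the point equation of $f$, and in general they do not.

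What saves the argument---and what the paper's ``straightforward computation'' actually is---is that the discrepancy between the two equations, namely
\begin{equation*}
(\partial_y\ln H_1-\partial_y\ln H_2)\partial_x\theta-(\partial_x\ln H_1-\partial_x\ln H_2)\partial_y\theta+\bigl(\partial_x\ln H_1\,\partial_y\ln H_2-\partial_x\ln H_2\,\partial_y\ln H_1\bigr)\theta,
\end{equation*}
vanishes identically for the specific function $\theta=\sqrt{H_2^2+\varepsilon H_1^2}$: multiply by $\theta$, use $\theta\,\partial_x\theta=H_2\partial_xH_2+\varepsilon H_1\partial_xH_1$ and $\theta\,\partial_y\theta=H_2\partial_yH_2+\varepsilon H_1\partial_yH_1$, and observe that the coefficients of $H_2^2$ and of $\varepsilon H_1^2$ each cancel. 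So the equivalence must be stated and proved only for $\varphi=\sqrt{H_2^2+\varepsilon H_1^2}/(\sqrt{c}H_1H_2)$, not as a general dictionary between (\ref{eisenhart_equ}) and (\ref{point_equ_surf}). With that correction, the rest of your argument (both directions, the linearity remark, and the non-degeneracy caveats) goes through and agrees with the paper's proof.
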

\begin{proof}
By definition a surface is a G-surface if and only if there exist curvature line coordinates and a Combescure transformation determined by functions $h$ and $l$ such that
\begin{equation*}
(h-l)^2 = \frac{H_2^2+\varepsilon H_1^2}{cH_1^2H_2^2}. 
\end{equation*}
Hence, this is the case if and only if the function $\varphi:= \frac{\sqrt{H_2^2+\varepsilon H_1^2}}{\sqrt{c}H_1H_2}$ fulfills equation (\ref{eisenhart_equ}). A straightforward computation shows that equation (\ref{eisenhart_equ}) holds if and only if  $\sqrt{H_2^2 + \varepsilon H_1^2}$ is a solution of the point equation of $f$.
\end{proof}
\noindent As a consequence of the construction in this proof, the function $\varphi$ gives rise to a 1-parameter family of Combescure transformations of $f$ such that the G-condition (\ref{cond_G}) is satisfied. Hence, any G-surface admits (up to orientation) a 1-parameter family of associated surfaces: let $\hat{f}_0$ be an arbitrary associated surface of $f$, then this 1-parameter family $(\hat{f}_c)_{c \in \mathbb{R}}$ is given by
\begin{equation*}
\hat{f}_c:= \hat{f}_0+cf.
\end{equation*}
In particular, any three associated surfaces $\hat{f}_1$, $\hat{f}_2$ and $\hat{f}_3$ satisfy the relation
\begin{equation*}
(\hat{f}_1-\hat{f}_3)=\lambda (\hat{f}_2 - \hat{f}_3)
\end{equation*}
for a constant $\lambda \in \mathbb{R}\setminus\{ 0 \}$.

\bigskip

\noindent As many integrable surface classes, we will prove that also G-surfaces can be characterized by the existence of Combescure transforms fulfilling a special relation between their principal curvatures:
\begin{thm}\label{thm_G_dual}
A surface $f$ is a G-surface if and only if there exists a non-trivial pair $(\hat{f}, f^\star)$ of Combescure transforms of $f$ with principal curvatures satisfying
\begin{equation}\label{G_cond_curv}
\frac{1}{\kappa_1\kappa_2^\star}+\frac{1}{\kappa_2 \kappa_1^\star}=-\frac{2}{\hat{\kappa}_1\hat{\kappa}_2}.
\end{equation}
A Combescure related pair $(\hat{f}, f^\star)$ of $f$ will be called \emph{non-trivial} if 
\begin{equation}\label{non_trivial}
\kappa_1\kappa_1^\star \neq - \hat{\kappa}_1^2 \ \ \text{or} \ \ \kappa_2\kappa_2^\star \neq -\hat{\kappa}_2^2.
\end{equation}
\end{thm}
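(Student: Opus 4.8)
The plan is to translate everything into the language of the functions $h,l$ of Definition \eqref{def_comb} and \eqref{comp_comb}, and the scalar solutions $\varphi$ of Eisenhart's equation \eqref{eisenhart_equ}. First I would record, using the formulas $\hat\kappa_1=\delta\kappa_1/h$ and $\hat\kappa_2=\delta\kappa_2/l$, that $\kappa_i/\hat\kappa_i$ is (up to the sign $\delta$) just the transition function along the $i$-th curvature line; so if $\hat f$ is determined by $(h,l)$ and $f^\star$ by $(h^\star,l^\star)$, then
\[
\frac{1}{\kappa_1\kappa_2^\star}+\frac{1}{\kappa_2\kappa_1^\star}
=\frac{1}{\kappa_1\kappa_2}\big(h l^\star + l h^\star\big),\qquad
\frac{2}{\hat\kappa_1\hat\kappa_2}=\frac{2}{\kappa_1\kappa_2}\,h l
\]
when $\hat f$ itself is the Combescure transform with data $(h,l)$. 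Hence the curvature identity \eqref{G_cond_curv} is equivalent, after clearing the common factor $1/(\kappa_1\kappa_2)$ and using that $\hat f$ must be simultaneously the ``associated'' surface, to a purely metric relation: there exist Combescure data $(h,l)$, $(h^\star,l^\star)$ with
\[
h l^\star + l h^\star = -2\,h l.
\]
Setting $\varphi:=h-l$, $\varphi^\star:=h^\star-l^\star$ and using the freedom to add a common constant to $h,l$ (which does not change $\varphi$), the displayed relation will be seen to force $\varphi\,\varphi^\star$ to be proportional to $H_2^2+\varepsilon H_1^2$ divided by $H_1^2H_2^2$ for a suitable $\varepsilon$ and constant $c$; the non-triviality hypothesis \eqref{non_trivial} is exactly what rules out the degenerate case $\varphi\varphi^\star\equiv 0$ and guarantees $c\neq 0$.

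For the ``only if'' direction I would start from a G-surface: by Proposition \ref{G_point_invariant} the function $\sqrt{H_2^2+\varepsilon H_1^2}$ solves the point equation \eqref{point_equ_surf}, equivalently $\varphi=\sqrt{H_2^2+\varepsilon H_1^2}/(\sqrt c\,H_1H_2)$ solves Eisenhart's equation \eqref{eisenhart_equ}. This $\varphi$ produces via \eqref{comb_from_phi} the associated transform $\hat f$ with $h-l=\varphi$. Now I need a \emph{second} Eisenhart solution $\varphi^\star$ such that the product $\varphi\varphi^\star$ reproduces, up to a nonzero constant, $(H_2^2+\varepsilon H_1^2)/(H_1^2H_2^2)$; the natural candidate is $\varphi^\star:=\varphi$ itself, i.e.\ taking $f^\star$ in the same 1-parameter family as $\hat f$. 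One then checks that the pair $(\hat f,f^\star)$ with this choice indeed satisfies \eqref{G_cond_curv}: this is the straightforward computation $hl^\star+lh^\star=-2hl$ with $h^\star,l^\star$ obtained from $h,l$ by the reflection $(h,l)\mapsto(-l,-h)$, which is permitted precisely because $h-l$ and $-l-(-h)$ coincide, so $-l,-h$ is again Combescure data for the same $\varphi$. Non-triviality of this pair is immediate since $\kappa_1\kappa_1^\star=-\kappa_1^2 l/h\neq -\hat\kappa_1^2=-\kappa_1^2/h^2$ unless $hl\equiv 1$, a case one disposes of by rescaling $\varphi$ (i.e.\ by choosing a different $c$).

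For the converse, given a non-trivial Combescure pair satisfying \eqref{G_cond_curv}, rewrite it as $hl^\star+lh^\star+2hl=0$ as above. Writing $h^\star=h-\varphi^\star+(h^\star-h)$ is awkward, so instead I would argue directly: since $(h,l)$ and $(h^\star,l^\star)$ are Combescure data, both $\varphi:=h-l$ and $\varphi^\star:=h^\star-l^\star$ solve Eisenhart's equation \eqref{eisenhart_equ}. The relation $hl^\star+lh^\star+2hl=0$, differentiated in $x$ and in $y$ and reduced with \eqref{comb_from_phi} and \eqref{comp_comb}, should collapse to an algebraic identity expressing $\varphi\varphi^\star$ as a linear combination $a/H_1^2+b/H_2^2$ with $a,b$ constants; comparing with \eqref{G_repara} and absorbing constants into the $x$- and $y$-reparametrisations yields $\varepsilon\in\{0,\pm1\}$ and $c\neq 0$, whence $f$ is a G-surface with associated surface the transform built from $\varphi$. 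The main obstacle I anticipate is this last step: extracting from the single scalar relation $hl^\star+lh^\star=-2hl$ — together with the two Combescure systems — the correct \emph{separated} form $\varphi\varphi^\star=a/H_1^2+b/H_2^2$, and checking that the constant $c=1/a$ (or $1/b$) is genuinely nonzero, which is where the non-triviality clause \eqref{non_trivial} must be invoked to exclude $\varphi\varphi^\star\equiv 0$. Everything else is bookkeeping with \eqref{comb_from_phi} and the point equation.
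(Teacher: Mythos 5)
Your opening translation of (\ref{G_cond_curv}) into the Combescure data is incorrect, and the error propagates through both directions. With $d\hat f=h\,\partial_xf\,dx+l\,\partial_yf\,dy$ and $df^\star=h^\star\partial_xf\,dx+l^\star\partial_yf\,dy$ one has $\kappa_2^\star=\delta^\star\kappa_2/l^\star$, hence $\tfrac{1}{\kappa_1\kappa_2^\star}=\tfrac{l^\star}{\delta^\star\kappa_1\kappa_2}$ and the relation (\ref{G_cond_curv}) reads $h^\star+l^\star=-2\delta^\star hl$, \emph{not} $hl^\star+lh^\star=-2hl$. (Mixed products of the form $h'l''+l'h''=-2$ do appear in the paper's argument, but only after re-expressing $f$ and $f^\star$ as Combescure transforms of $\hat f$, i.e.\ with $h'=1/h$, $h''=h^\star/h$.) This already sinks your forward construction: the pair $(h^\star,l^\star)=(-l,-h)$ is in general not Combescure data at all, because a solution $\varphi$ of (\ref{eisenhart_equ}) determines $h$ via (\ref{comb_from_phi}) up to an additive constant only — it is false that any pair with difference $\varphi$ satisfies (\ref{comp_comb}); for $(-l,-h)$ the compatibility conditions would force $\partial_x(h+l)=\partial_y(h+l)=0$. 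And even formally, $h^\star+l^\star=-(h+l)$ does not equal $-2hl$. The correct dual is quadratic in the data, $h^\star=-h^2+1/H_1^2$, $l^\star=-l^2+\varepsilon/H_2^2$ (equation (\ref{def_fstar})), and the point of the forward direction is that the integrability of exactly this one-form is equivalent to the G-condition (\ref{cond_G}); then $h^\star+l^\star=-h^2-l^2+(h-l)^2=-2hl$.

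For the converse your target identity is also the wrong one: the quantity that separates into $a/H_1^2+b/H_2^2$ is $(h-l)^2=\varphi^2$, not $\varphi\varphi^\star=(h-l)(h^\star-l^\star)$ (the latter is not of that form even for the genuine dual). The working route is: from (\ref{comp_comb}) and the relation $h'l''+l'h''=-2$ derive the ODEs $\partial_y(h'h'')=-2(1+h'h'')\partial_y\ln\hat H_1$ and its $x$-analogue, integrate to $h'h''=-1+\psi_1(x)/\hat H_1^2$, $l'l''=-1+\psi_2(y)/\hat H_2^2$, and substitute back to get $\bigl(\kappa_1/\hat\kappa_1-\kappa_2/\hat\kappa_2\bigr)^2=\psi_1/H_1^2+\psi_2/H_2^2$, which is (\ref{G_repara}); non-triviality (\ref{non_trivial}) is used precisely to guarantee that not both $\psi_1,\psi_2$ vanish, so that $c\neq0$. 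So while your overall strategy (reduce to the $h,l$ data and hunt for a separated identity) is in the right spirit, the specific algebra you propose to carry it out does not close, and the step you flag as the ``main obstacle'' cannot be completed in the form you state it.
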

\begin{rem}\label{rem_trivial} Note that any arbitrary surface $f$ admits a pair of trivial Combescure transforms such that the relation (\ref{G_cond_curv}) holds: namely, for any $\lambda \in \mathbb{R}\setminus \{ 0 \}$, the choice
\begin{equation*}
\hat{f}:=\lambda f \ \ \ \text{and } \ \ \ f^\star:=-\lambda^2 f
\end{equation*}
provides two such Combescure transforms of $f$. 
\end{rem}
%
%
\noindent\textit{Proof of Theorem \ref{thm_G_dual}.}
Suppose that $f$ is a G-surface with induced metric 
\begin{equation*}
I=H_1^2dx^2+H_2^2dy^2
\end{equation*}
and let $\hat{f}$ be an associated surface defined by
\begin{equation*}
\partial_x \hat{f} = h \partial_x f \ \ \text{and} \ \  \partial_y \hat{f} = l \partial_y f
\end{equation*}
with principal curvatures $\hat{\kappa}_1$ and $\hat{\kappa}_2$.

We will prove that the sought-after Combescure transform $f^\star$ of $f$ is, for a suitable choice of $\tilde{\delta} \in \{ \pm 1 \}$, obtained by integrating
\begin{equation}\label{def_fstar}
df^\star=\tilde{\delta}\big( -h^2+\frac{1}{H_1^2} \big)\partial_x f \ dx + \tilde{\delta}\{ -l^2+\frac{\varepsilon}{H_2^2} \}\partial_y f \ dy.
\end{equation}
Indeed, a straightforward computation using the G-condition (\ref{cond_G}) shows that (\ref{def_fstar}) is integrable. Since the principal curvatures of $f^\star$ are given by
\begin{equation*}
\kappa_1^\star = \tilde{\delta}\delta\frac{H_1^2\kappa_1}{-h^2H_1^2+1} \ \ \text{and} \ \ \kappa_2^\star = \tilde{\delta}\delta\frac{H_2^2\kappa_2}{-l^2H_2^2+\varepsilon},
\end{equation*}
where $\delta \in \{ \pm 1 \}$ depends on the orientation of the normal of $f^\star$, we obtain that
\begin{equation*}
\tilde{\delta}\delta\Big( \frac{1}{\kappa_1\kappa_2^\star}+\frac{1}{\kappa_2\kappa_1^\star}\Big )= -\frac{2hl}{\kappa_1\kappa_2}=-\frac{2}{\hat{\kappa}_1\hat{\kappa}_2}.
\end{equation*}
Hence, for the choice $\tilde{\delta}=\delta$, we have constructed the sought-after pair $(\hat{f}, f^\star)$, which is non-trivial.
%
%
\\\\Conversely, suppose that $\hat{f}$ and $f^\star$ are non-trivial Combescure transforms of $f$ which satisfy condition ($\ref{G_cond_curv}$). Then, there exist functions $h', l'$ and $h'', l''$ such that 
\begin{align*}
H_1&=h'\hat{H}_1, \ \ \ \ H_2=l'\hat{H}_2,
\\H_1^\star&=h''\hat{H}_1, \ \ \ H_2^\star=l''\hat{H}_2
\end{align*}
and the principal curvatures of $f$ and $f^\star$ read $\kappa_1=\hat{\kappa}_1/h'$ and $\kappa_1^\star=\hat{\kappa}_1/h''$, respectively.  From equation (\ref{G_cond_curv}), we then deduce that 
\begin{equation}\label{cond_hs}
h'l''+l'h''=-2
\end{equation}
holds.

Moreover, from the compatibility conditions (\ref{comp_comb}) and  (\ref{cond_hs}) we obtain
\begin{align*}
\partial_y(h'h'')&=-2\{1+h'h''\}\partial_y \ln \hat{H}_1,
\\\partial_x(l'l'')&=-2 \{1+l'l'' \}\partial_x \ln \hat{H}_2
\end{align*}
and therefore
\begin{align*}
h'h''&=-1+\frac{\psi_1(x)}{\hat{H}_1^2},
\\l'l''&=-1+\frac{\psi_2(y)}{\hat{H}_2^2}
\end{align*}
for suitable functions $\psi_1(x)$ and $\psi_2(y)$. Since, by (\ref{non_trivial}), we have
\begin{equation*}
h'h'' \neq 1 \ \ \text{or} \ \ l' l'' \neq 1,
\end{equation*}
at least one of the functions $\psi_1(x)$ and $\psi_2(y)$ does not vanish.
\\\\Furthermore, substituting in equation (\ref{cond_hs}), gives
\begin{equation*}
\Big( \frac{\kappa_1}{\hat{\kappa}_1}-\frac{\kappa_2}{\hat{\kappa}_2}\Big)^2=\frac{\psi_2(y)}{H_2^2}+\frac{\psi_1(x)}{H_1^2},
\end{equation*}
which leads, for a suitable choice of $\varepsilon \in \{0, \pm 1 \}$ and $c \in \mathbb{R}\setminus \{ 0 \}$ to the G-condition (\ref{G_repara}). Thus, $f$ is a G-surface. \qed
\\\\Therefore, due to symmetry, it follows that $\hat{f}$ is also an associated surface of $f^\star$ and we obtain another G-surface:
\begin{corand}
Let $f$ be a G-surface, then the Combescure transform $f^\star$ defined by (\ref{G_cond_curv}) is a G-surface, which will be called the \emph{dual surface of $f$ with respect to the associated surface $\hat{f}$}.
\end{corand}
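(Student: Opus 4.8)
The strategy is to observe that the characterizing relation (\ref{G_cond_curv}), together with the non-triviality clause (\ref{non_trivial}), is symmetric under interchanging $f$ with its dual $f^\star$, so that Theorem~\ref{thm_G_dual}, now applied to $f^\star$, gives the claim at once: the pair $(\hat{f}, f)$ will play for $f^\star$ exactly the role that $(\hat{f}, f^\star)$ plays for $f$.

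Two bookkeeping steps are needed. First, I would check that $\hat{f}$ and $f$ are genuine Combescure transforms of $f^\star$. By construction $f^\star$ arises from $f$ through a Combescure transformation whose scaling functions are the (generically nowhere vanishing) coefficients in (\ref{def_fstar}); inverting these functions exhibits $f$, and then also $\hat{f}$, in the form (\ref{def_comb}) with $f^\star$ in place of $f$ --- equivalently, all three surfaces share curvature line coordinates and a Gauss map that agree up to orientation. Second, the left-hand side of (\ref{G_cond_curv}) is unchanged when the pairs $(\kappa_1,\kappa_2)$ and $(\kappa_1^\star,\kappa_2^\star)$ are swapped while $\hat{\kappa}_1,\hat{\kappa}_2$ stay fixed (the products $\kappa_1\kappa_2^\star$ and $\kappa_2\kappa_1^\star$ are merely reordered), and the non-triviality condition (\ref{non_trivial}) is manifestly symmetric in $f$ and $f^\star$. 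Hence $(\hat{f}, f)$ is a non-trivial Combescure related pair of $f^\star$ satisfying the curvature relation of Theorem~\ref{thm_G_dual}, so that $f^\star$ is a G-surface, for a suitable new $\varepsilon$ and $c$. Re-reading the converse part of the proof of Theorem~\ref{thm_G_dual} --- in which the principal curvatures of the associated surface are precisely those occurring in the resulting form of (\ref{G_repara}) --- moreover identifies $\hat{f}$ as an associated surface of $f^\star$, and $f$ as the dual of $f^\star$ with respect to $\hat{f}$, which justifies the terminology.

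I do not expect a real obstacle: the whole content is that (\ref{G_cond_curv}) and (\ref{non_trivial}), though not symmetric in $f$ and $\hat{f}$, are symmetric in $f$ and $f^\star$. The one point worth a remark is the degenerate situation in which a coefficient in (\ref{def_fstar}) vanishes identically and $f^\star$ collapses to a curve or a point; there the statement is vacuous and may be excluded.
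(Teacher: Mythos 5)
Your proposal is correct and is exactly the paper's argument: the paper derives the corollary in one line ("due to symmetry") from the fact that the relation (\ref{G_cond_curv}) and the non-triviality condition (\ref{non_trivial}) are invariant under interchanging $(\kappa_1,\kappa_2)$ with $(\kappa_1^\star,\kappa_2^\star)$, so that Theorem~\ref{thm_G_dual} applies to $f^\star$ with the non-trivial pair $(\hat{f},f)$. Your additional bookkeeping (Combescure-relatedness of $f$ and $\hat f$ to $f^\star$, and the degenerate case where $f^\star$ collapses) is sound and consistent with the paper's conventions.
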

\begin{center}
\begin{tikzcd}
&[-2.5em] &[-1em] \bm{f} \arrow[dl,swap,dash,"\mathcal{C}"] \arrow[d,swap,dash,"\mathcal{C}"] \arrow[rd,dash,"\mathcal{C}"] &[-1em]&[-2.5em]\\
\cdots & \bm{\hat{f}_{c-1}} \arrow[d,dash,swap,"\mathcal{C}"]& \bm{\hat{f}_{c}}\arrow[d,swap,dash,"\mathcal{C}"] & \bm{\hat{f}_{c+1}}\arrow[d,dash,swap,"\mathcal{C}"] & \cdots\\
\cdots & \bm{f^\star_{c-1}} & \bm{f^\star_{c}} & \bm{f^\star_{c+1}} & \cdots
\end{tikzcd}
\\[6pt]\small\emph{Figure 1.} A Guichard net $f$ with a 1-parameter family of Combescure related associated systems $(\hat{f}_c)_{c \in  \mathbb{R}}$ and the corresponding dual systems $(f^\star_c)_{c \in  \mathbb{R}}$.
\end{center}
\ \\Note that a dual surface depends on the choice of the associated surface: let $\hat{f}_0$ be an associated surface with corresponding dual surface $f^{\star}_0$, then the dual surface $f^{\star}_c$ with respect to the associated surface $\hat{f}_c$ is given by
\begin{equation*}
f_c^\star:=f_0^\star-2c\hat{f}_0-c^2f=f_0^\star-2c\hat{f}_{\frac{c}{2}}.
\end{equation*}
Furthermore, let us fix some dual surface $f_{\bar{c}}^\star$ of $f$. Then the 1-parameter family of associated surfaces of $f_{\bar{c}}^\star$ becomes
\begin{equation*}
\hat{f}_{\bar{c}d}:=\hat{f}_{\bar{c}}+df_{\bar{c}}^\star, \ \ d \in \mathbb{R},
\end{equation*}
while the corresponding dual surfaces of $f_{\bar{c}}^\star$ are given by
\begin{equation*}
(f_{\bar{c}}^{\star})^\star_d:=f-2d\hat{f}_{\bar{c}}-d^2f_{\bar{c}}^\star.
\end{equation*}
Hence, we recover the original surface $f$ in the dual family of any dual surface $f_{\bar{c}}^\star$ of $f$ at $d=0$,
\begin{equation*}
(f_{\bar{c}}^\star)_0^\star = f,
\end{equation*} 
and the described construction indeed provides a duality.
\\\\We further remark that, generically, the two associated families $(\hat{f}_c)_{c \in \mathbb{R}}$ and $(\hat{f}_{\bar{c}d})_{d\in\mathbb{R}}$ do not coincide, while for $d\neq 0$ the dual surfaces $(f_{\bar{c}}^{\star})^\star_{d\in\mathbb{R}}$  amount to scalings of the family $(f_c^\star)_{c \in \mathbb{R}}$ (cf.\,Example \ref{example_guichard} in Section \ref{section_guichard_nets}).
\ \\\\ \ 
\begin{center}
\includegraphics[scale=0.35]{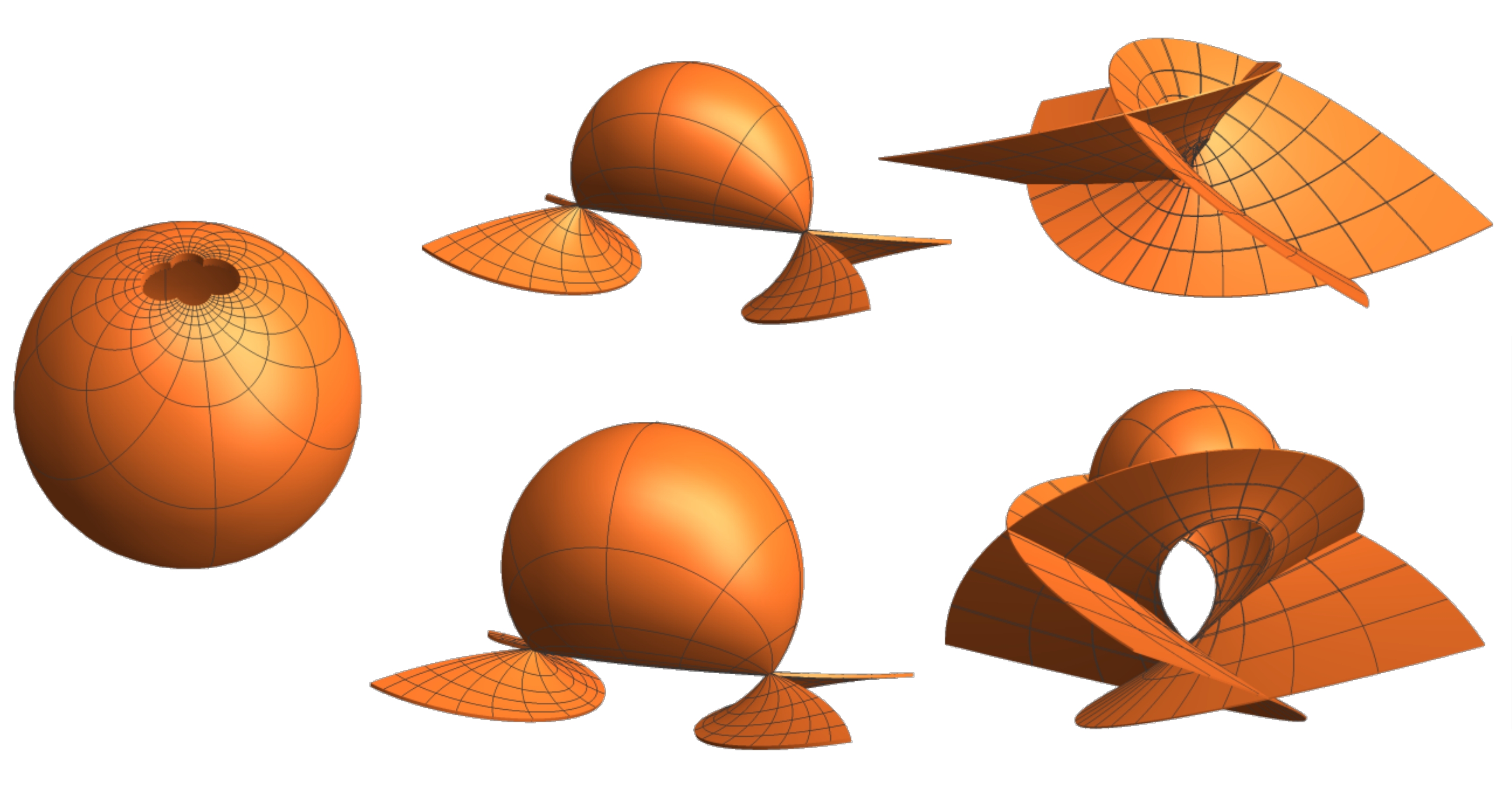}
\\[6pt]\small\emph{Figure 2.} A sphere with two Dupin cyclides as associated surfaces (\textit{middle}) and the two corresponding dual surfaces (\textit{right}).
\end{center}
\bigskip

Apart from providing new G-surfaces from a given one, the characterization discussed in Theorem~\ref{thm_G_dual} shows that G-surfaces are contained in the integrable class of  O-surfaces \cite{O_surface}: 
if we consider a non-trivial triplet of Combescure transformations and equip the dual space with the indefinite metric induced by the matrix
\begin{equation}\label{matrix_O}
\begin{pmatrix}
0&1&0\\
1&0&0\\
0&0&2
\end{pmatrix},
\end{equation}
then the orthogonality condition that defines the corresponding subclass of O-surfaces, is equivalent to the condition (\ref{G_cond_curv}).

However, note that the requirement of non-trivial Combesure transforms is indeed necessary: by Remark \ref{rem_trivial}, any surface satisfies the criterion of an O-surface with respect to the indefinite metric induced by (\ref{matrix_O}), if trivial Combescure transformations are not excluded.

\subsection{Examples} Using the just derived  characterizations we demonstrate how various well-known surface classes arise in the context of G-surfaces. 
\bigskip 

\noindent\textit{Isothermic surfaces.} Since any isothermic surface admits conformal curvature line coordinates, any scaling of $f$ provides an associated surface as seen from the definition using $\varepsilon=-1$. The converse also holds: 
\begin{prop}\label{prop_G_isothermic}
A G-surface $f$ is isothermic if and only if the family of associated surfaces $\hat{f}_c$ is given by scalings of $f$, that is,  $\hat{f}_c=cf$ for all $c\in \mathbb{R}$.
\end{prop}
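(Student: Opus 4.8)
The plan is to prove both directions using the characterization in Proposition~\ref{G_point_invariant} together with the explicit description of the associated family $\hat{f}_c=\hat{f}_0+cf$. Recall that the relevant case here is $\varepsilon=-1$, so the G-condition asserts that $\sqrt{H_2^2-H_1^2}$ solves the point equation~(\ref{point_equ_surf}) in suitable curvature line coordinates, while isothermicity means that in some curvature line coordinates $H_1=H_2$; more invariantly, $f$ is isothermic precisely when there exist $\chi_1=\chi_1(x)$, $\chi_2=\chi_2(y)$ with $H_1^2/\chi_1^2=H_2^2/\chi_2^2$, i.e.\ the constant function $1$ solves a reparametrized point equation. The point of contact between the two notions is that $\hat f_c=cf$ for all $c$ is equivalent to $\hat f_0=0$, which in the parametrization of~(\ref{def_comb}) means $h\equiv l\equiv 0$ up to an additive constant — but since $h$ and $l$ are only determined up to a \emph{common} additive constant, $\hat f_0=0$ really says $h=l=$ const, and the constant can be normalized to $0$. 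By~(\ref{comb_from_phi}) we have $l=h-\varphi$, so $h=l$ forces $\varphi\equiv 0$. On the other hand, in the proof of Proposition~\ref{G_point_invariant} the associated family is generated by $\varphi=\sqrt{H_2^2-H_1^2}/(\sqrt c\,H_1H_2)$; the subtlety is that $\varphi=0$ would contradict $c\ne0$ unless $H_1=H_2$.

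Concretely, for the ``if'' direction I would assume $\hat f_c=cf$ for all $c\in\mathbb R$. Then the associated surface $\hat f_0$ (the ``$c=0$'' member) is the zero surface, so the defining functions $h,l$ of the Combescure transformation producing $\hat f_0$ are equal to a common constant, hence $\varphi=h-l=0$. But then the G-condition~(\ref{cond_G}), rewritten as $(h-l)^2=(H_2^2-H_1^2)/(cH_1^2H_2^2)$ as in the proof of Proposition~\ref{G_point_invariant}, forces $H_2^2-H_1^2=0$, i.e.\ $H_1=H_2$ in these curvature line coordinates, so $f$ is isothermic. For the ``only if'' direction I would start from the remark already recorded in the paper (just after Proposition~\ref{prop_G_isothermic} is stated, and in the discussion of isothermic surfaces): an isothermic surface admits conformal curvature line coordinates, $H_1=H_2=:H$, and then $\varepsilon=-1$ gives $H_2^2-H_1^2=0$, so $\sqrt{H_2^2-H_1^2}\equiv 0$ trivially solves the point equation and $f$ is a G-surface with $\varphi\equiv0$. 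By the construction following Proposition~\ref{G_point_invariant}, the associated family generated by $\varphi$ is $\hat f_c=\hat f_0+cf$ with $\hat f_0$ corresponding to $h=l=0$, hence $\hat f_0=0$ (up to translation, which we normalize away) and $\hat f_c=cf$. The only thing to check is that \emph{every} associated surface of $f$ arises this way, which is exactly the statement (already proven in the excerpt) that the associated surfaces form the single $1$-parameter family $(\hat f_0+cf)_{c}$; combined with $\hat f_0=0$ this gives $\hat f_c=cf$ for all $c$.

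The one genuine subtlety — and the step I expect to need the most care — is the interplay between the \emph{choice} of curvature line coordinates and the two notions involved. The G-condition is not coordinate-invariant (the paper stresses this), and likewise ``$H_1=H_2$'' is not; what is invariant is the reparametrized condition~(\ref{G_repara}). So in the ``if'' direction I must be careful to argue in whichever curvature line coordinates realize $f$ as a G-surface with $\varepsilon=-1$, deduce $H_1=H_2$ there, and then conclude isothermicity (which is a coordinate-free property, certified by the existence of conformal curvature line coordinates — here those very coordinates work). Conversely, in the ``only if'' direction I should pass to conformal curvature line coordinates before invoking Proposition~\ref{G_point_invariant}. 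A second point worth a sentence: one must confirm that $\varphi\equiv 0$ is genuinely excluded unless $H_1=H_2$, i.e.\ that a G-surface with $\varepsilon=-1$ and $\hat f_0=0$ is forced to satisfy $H_1=H_2$ — this is immediate from $(h-l)^2=(H_2^2-H_1^2)/(cH_1^2H_2^2)$ with $c\ne 0$, but it is the hinge of the whole argument and deserves to be displayed.
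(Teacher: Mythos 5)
Your proof is correct and follows the same route the paper sketches: isothermicity gives conformal curvature line coordinates with $H_1=H_2$, so with $\varepsilon=-1$ the G-condition degenerates and is satisfied exactly by $h=l=\mathrm{const}$, i.e.\ by the scalings $\hat f_c=cf$; conversely a scaling among the associated surfaces forces $(h-l)^2=0$ and hence $H_2^2+\varepsilon H_1^2=0$, which by positivity of $H_1^2,H_2^2$ gives $\varepsilon=-1$ and $H_1=H_2$. The only refinement worth noting is that in the \emph{if} direction you need not single out $\hat f_0=0$: any one associated surface of the form $cf$ already yields $h=l$, which is the hinge you correctly identify.
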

\noindent As a consequence, in the context of G-surfaces, the dual surfaces of an isothermic surface $f$ with induced metric $I=e^{2\psi}\{ dx^2+dy^2 \}$ are determined by
\begin{equation*}
df_c^\star=\{ -c^2+e^{-2\psi} \} \partial_x f dx + \{ -c^2-e^{-2\psi}\} \partial_y f dy.
\end{equation*}
Hence, if for $c=0$ the associated surface degenerates to a point, the corresponding dual surface becomes the classical Christoffel dual \cite{christoffel} of an isothermic surface and therefore it is again isothermic. However, the other dual surfaces $f_c^\star$ are in general not isothermic again.
\noindent\\\\\textit{Guichard surfaces.} Recall \cite{calapso_guichard} that a surface is a Guichard surface if and only if it fulfills Calapso's equation
\begin{equation*}
c(\kappa_1-\kappa_2)^2H_1^2H_2^2=H_2^2+\varepsilon H_1^2
\end{equation*}
for suitable curvature line coordinates, $\varepsilon \in \{\pm 1, 0\}$ and some $c \in \mathbb{R}\setminus \{ 0 \}$. 
\\\\Hence, since the spherical representative of a surface is a Combescure transform  that is determined by the functions $h=\kappa_1$ and $l=\kappa_2$, any Guichard surface is a G-surface by Calapso's equation. 

Moreover, the spherical representative as an associated surface distinguishes this subclass:
\begin{prop}\label{prop_G_Guichard}
A G-surface $f$ is a Guichard surface if and only if its spherical representative is an associated surface of $f$.
\end{prop}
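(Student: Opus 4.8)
The plan is to prove the two implications separately, in both cases exploiting the characterization of Theorem~\ref{thm_G_dual} together with the formula $\hat{\kappa}_i = \delta\kappa_i/h_i$ relating the principal curvatures of a Combescure transform to those of the original surface. The key observation is that the spherical representative (Gauss map) $n$ of $f$ is precisely the Combescure transform determined by $h = \kappa_1$ and $l = \kappa_2$; equivalently, in the language of the associated family, $n$ is an associated surface of $f$ exactly when the G-condition~(\ref{cond_G}) holds with this particular choice of $(h,l)$, i.e.\ when $c\kappa_1^2\kappa_2^2 H_1^2 H_2^2 (1/\kappa_1 - 1/\kappa_2)^2 = H_2^2 + \varepsilon H_1^2$, which simplifies to $c(\kappa_1-\kappa_2)^2 H_1^2 H_2^2 = H_2^2 + \varepsilon H_1^2$ — Calapso's equation. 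So the content of the proposition is essentially that ``$n$ is an associated surface'' and ``Calapso's equation holds in suitable curvature line coordinates'' are equivalent.

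For the forward direction, suppose $f$ is a Guichard surface. Then Calapso's equation holds for suitable curvature line coordinates, $\varepsilon\in\{0,\pm1\}$ and $c\neq 0$, and the computation sketched above shows that the G-condition~(\ref{cond_G}) is satisfied by the Combescure transform $(h,l) = (\kappa_1,\kappa_2)$, which is exactly the spherical representative. (One should note that the spherical representative is a genuine Combescure transform — its tangent planes are parallel to those of $f$ along curvature lines — and that it is non-degenerate as long as neither principal curvature vanishes identically; if one of them does, the associated surface degenerates and the corresponding radius of principal curvature is taken to be zero as stipulated in the preliminaries, which is consistent with the $\varepsilon = 0$ case.) Hence $n$ is an associated surface of $f$.

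For the converse, suppose the spherical representative $n$ is an associated surface of $f$, i.e.\ the G-condition~(\ref{cond_G}) holds in some curvature line coordinates with the Combescure transform $\hat f = n$ determined (up to translation) by $h=\kappa_1$, $l=\kappa_2$. Then $\kappa_i/\hat{\kappa}_i = \kappa_i/(\delta\kappa_i/h_i) = h_i/\delta = \kappa_i/\delta$, so $(\kappa_1/\hat\kappa_1 - \kappa_2/\hat\kappa_2)^2 = (\kappa_1-\kappa_2)^2$, and~(\ref{cond_G}) becomes $c H_1^2 H_2^2 (\kappa_1-\kappa_2)^2 = H_2^2 + \varepsilon H_1^2$, which is precisely Calapso's equation. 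By the quoted characterization \cite{calapso_guichard}, $f$ is a Guichard surface.

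The main subtlety — and the step I would treat with the most care — is the bookkeeping of which curvature line coordinates are being used and the handling of orientation via $\delta\in\{\pm1\}$: the G-condition is not reparametrization-invariant (only the reparametrized form~(\ref{G_repara}) is), so one must make sure the coordinates realizing Calapso's equation are the same ones realizing~(\ref{cond_G}); since both conditions are stable under coordinate changes of the form $x\mapsto\tilde x(x)$, $y\mapsto\tilde y(y)$ (up to absorbing the functions $\chi_i$ into a redefinition of $c$ and $\varepsilon$), this causes no real difficulty, but it deserves an explicit remark. A further minor point is to confirm that the spherical representative is genuinely a \emph{non-trivial} associated surface in the sense relevant here (it is not of the form $cf$ unless $f$ is totally umbilic), so that the Guichard structure is not the degenerate one coming from Remark~\ref{rem_trivial}.
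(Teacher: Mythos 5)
Your proof is correct and follows essentially the same route as the paper, which presents the proposition as an immediate consequence of the observation that the spherical representative is the Combescure transform determined by $(h,l)=(\kappa_1,\kappa_2)$, so that the G-condition (\ref{cond_G}) specializes precisely to Calapso's equation and vice versa. Your extra remarks on coordinate bookkeeping and non-triviality are sound but not strictly needed: both conditions are stated for the same ``suitable'' curvature line coordinates, and the definition of an associated surface via (\ref{cond_G}) imposes no non-triviality requirement.
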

\noindent We remark that, in the case of a Guichard surface, condition (\ref{G_cond_curv}) becomes the relation originally used to define Guichard surfaces \cite{guichard_defi}, namely
\begin{equation*}
\frac{1}{\kappa_1\kappa_2^\star}+\frac{1}{\kappa_2\kappa_1^\star}=-2.
\end{equation*}
Therefore, the dual surface $f^\star$ with respect to the spherical representative, constructed in the context of G-surfaces, coincides with the classical dual surface given by Guichard.
\\\\This subclass also provides examples of G-surfaces that satisfy the G-condition for $\varepsilon=0$: in this case, we deduce from the G-condition and the compatibility conditions (\ref{comp_comb}) that the associated surfaces are determined by functions $h$ and $l$ of the form
\begin{align}
h&=\pm\frac{1}{H_1^2}+\psi_1
\\l&=\psi_1,
\end{align}
where $\psi_1=\psi_1(x)$. 

Thus, for a Guichard surface, where one of the associated surfaces is given by the spherical representative, one of the principal curvatures of $f$ is constant along the corresponding curvature direction. Therefore, $f$ is a channel surface (cf.\,\cite{calapso_guichard}).
%
%
%
%
\\\\\textit{$\Omega$-surfaces.} By Demoulin's equation \cite{demoulin_associated}, a surface is an $\Omega$-surface if and only if there exists a Combescure transformed dual surface $f^\star$ fulfilling
\begin{equation}\label{demoulin}
\Big(\frac{1}{\kappa_1}-\frac{1}{\kappa_2}\Big)\Big(\frac{1}{\kappa_1^\star}-\frac{1}{\kappa_2^\star}\Big)=\frac{1}{\kappa_1^2H_1^2}+\frac{\varepsilon}{\kappa_2^2H_2^2},
\end{equation} 
for a suitable choice of curvature line coordinates inducing the first fundamental form $I=H_1^2dx^2+H_2^2dy^2$.
\\\\A reformulation of the condition (\ref{G_cond_curv}) between the principal curvatures of a G-surface and its associated and dual surfaces, will shed light on the relation between the dual surfaces of a G-surface and of an $\Omega$-surface:
\begin{lem}
Let $f$ be a G-surface, then the principal curvatures of an associated surface $\hat{f}$ and the corresponding dual surface $f^\star$ satisfy
\begin{equation*}
\Big( \frac{1}{\kappa_1}-\frac{1}{\kappa_2} \Big)\Big(\frac{1}{\kappa_1^\star}-\frac{1}{\kappa_2^\star} \Big)= \frac{1}{\kappa_1^2H_1^2}+\frac{\varepsilon}{\kappa_2^2H_2^2}-\Big(\frac{1}{\hat{\kappa}_1}-\frac{1}{\hat{\kappa}_2} \Big)^2.
\end{equation*}
\end{lem}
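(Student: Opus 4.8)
The plan is to prove this as a direct algebraic consequence of the explicit curvature formulas produced in the proof of Theorem~\ref{thm_G_dual}, with the final cancellation supplied by the G-condition. Recall from that proof that, writing the associated surface $\hat f$ by $\partial_x\hat f = h\,\partial_x f$ and $\partial_y\hat f = l\,\partial_y f$, one has $1/\hat\kappa_1 = \delta h/\kappa_1$ and $1/\hat\kappa_2 = \delta l/\kappa_2$ for a sign $\delta\in\{\pm1\}$, while the dual surface $f^\star$ built from~(\ref{def_fstar}) — with the sign choice made there, and under the harmless normalization $c=1$ in which that construction is stated — satisfies $1/\kappa_1^\star = (1/H_1^2 - h^2)/\kappa_1$ and $1/\kappa_2^\star = (\varepsilon/H_2^2 - l^2)/\kappa_2$.

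First I would set $a := 1/\kappa_1$, $b := 1/\kappa_2$, $X := 1/H_1^2 - h^2$, $Y := \varepsilon/H_2^2 - l^2$, so that $1/\kappa_1^\star = aX$ and $1/\kappa_2^\star = bY$, and expand the left-hand side of the asserted identity as $(a-b)(aX-bY) = a^2X + b^2Y - ab(X+Y)$. Then I would rewrite the right-hand side: since $\delta^2 = 1$ one has $(1/\hat\kappa_1 - 1/\hat\kappa_2)^2 = (ha - lb)^2 = h^2a^2 - 2hl\,ab + l^2b^2$, and $1/(\kappa_1^2 H_1^2) = a^2/H_1^2$, $\varepsilon/(\kappa_2^2 H_2^2) = \varepsilon b^2/H_2^2$; collecting terms, the right-hand side equals $a^2X + b^2Y + 2hl\,ab$.

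Comparing the two expressions, the identity reduces to $-ab(X+Y) = 2hl\,ab$, that is, to $(h-l)^2 = 1/H_1^2 + \varepsilon/H_2^2$. This is precisely the G-condition in the form extracted in the proof of Proposition~\ref{G_point_invariant} (equivalently, it is the very compatibility condition that makes~(\ref{def_fstar}) integrable), so the identity follows. I do not anticipate a genuine obstacle here: the computation is essentially the one-line expansion just indicated, and the only points demanding care are the bookkeeping of the orientation signs $\delta$ and $\tilde\delta$ — which combine to $1$ under the choices of Theorem~\ref{thm_G_dual} — and keeping the normalization of $\hat f$ fixed so that the constant $c$ does not reappear on the right-hand side.
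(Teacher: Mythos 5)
Your proposal is correct and follows essentially the same route as the paper: the paper's proof likewise quotes the explicit formulas $\kappa_1^\star=\kappa_1H_1^2/(1-h^2H_1^2)$, $\kappa_2^\star=\kappa_2H_2^2/(\varepsilon-l^2H_2^2)$ from the construction in Theorem~\ref{thm_G_dual} and then invokes "a straightforward computation using the G-condition," which is precisely the expansion you carry out, with the cancellation $(h-l)^2=1/H_1^2+\varepsilon/H_2^2$ as the key step.
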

\begin{proof}
Suppose $f$ is a G-surface. Then the principal curvatures of a dual surface with respect to the associated surface
\begin{equation*}
d\hat{f}=h\partial_x f dx + l \partial_y f dy
\end{equation*} 
are given by
\begin{align*}
\kappa_1^\star&=\frac{\kappa_1H_1^2}{-h^2H_1^2+1},
\\\kappa_2^\star&=\frac{\kappa_2H_2^2}{-l^2H_2^2+\varepsilon}.
\end{align*}
Now a straightforward computation using the G-condition (\ref{cond_G}) proves the claim.
\end{proof}
Therefore, a dual surface of a G-surface fulfills Demoulin's equation (\ref{demoulin}) if and only if 
\begin{equation*}
l\kappa_1-h\kappa_2=0.
\end{equation*}
Thus, we have proven:
\begin{prop}
The dual surface of a $G$-surface provides an $\Omega$-dual surface if and only if the corresponding associated surface is totally umbilic or degenerates to a point.
\end{prop}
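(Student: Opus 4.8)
The plan is to read the statement straight off the Lemma just proved. That Lemma shows that, for an associated surface $\hat f$ of the G-surface $f$ and the corresponding dual surface $f^\star$, the left-hand side of Demoulin's equation equals the right-hand side of (\ref{demoulin}) minus the correction term $\big(\tfrac{1}{\hat\kappa_1}-\tfrac{1}{\hat\kappa_2}\big)^2$. Hence $f^\star$ is an $\Omega$-dual surface of $f$ exactly when this correction term vanishes, that is, when $1/\hat\kappa_1=1/\hat\kappa_2$ everywhere on $M^2$; equivalently, using $\hat\kappa_1=\delta\kappa_1/h$ and $\hat\kappa_2=\delta\kappa_2/l$, when $l\kappa_1-h\kappa_2=0$ (the reformulation already noted just before the statement). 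So the whole question reduces to deciding when the two radii of principal curvature of the associated surface coincide.

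First I would treat the non-degenerate case: if $\hat f$ is an immersed surface, then $1/\hat\kappa_1\equiv 1/\hat\kappa_2$ is, by definition, precisely the condition that $\hat f$ be totally umbilic. Then I would dispose of the degenerate configurations using the convention fixed in the preliminaries that, whenever $h$ or $l$ vanishes identically, the corresponding radius of principal curvature of $\hat f$ is taken to be $0$. If $\hat f$ collapses to a curve, exactly one of $h$ and $l$ is identically zero, so exactly one radius equals $0$ while the other is generically non-zero, and the condition $1/\hat\kappa_1=1/\hat\kappa_2$ fails; if $\hat f$ collapses to a point, then $h\equiv l\equiv 0$, both radii equal $0$, and the condition holds trivially (equivalently $l\kappa_1-h\kappa_2\equiv 0$). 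Combining the cases yields exactly the stated alternative: $\hat f$ is totally umbilic, or $\hat f$ degenerates to a point.

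The analytic work has already been done in the Lemma, so no further computation is needed; the only genuinely non-routine step is the bookkeeping at the degenerate associated surfaces and the use of the radius-$0$ convention there, together with the standing assumption that $\kappa_1$ and $\kappa_2$ are nowhere zero, which is required in any case for Demoulin's equation to be meaningful.
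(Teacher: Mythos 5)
Your proposal is correct and follows exactly the paper's route: the paper likewise reads the proposition off the preceding Lemma, noting that the dual surface satisfies Demoulin's equation if and only if the correction term $\bigl(\tfrac{1}{\hat\kappa_1}-\tfrac{1}{\hat\kappa_2}\bigr)^2$ vanishes, i.e.\ $l\kappa_1-h\kappa_2=0$, and then identifies this with the associated surface being totally umbilic or a point. Your explicit bookkeeping of the degenerate cases (curve versus point, via the radius-zero convention) is in fact slightly more careful than the paper, which leaves that step implicit.
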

In particular, this proposition reflects that isothermic and Guichard surfaces belong to the classes of $G$- as well as that of $\Omega$-surfaces. In these cases, the corresponding associated surface coincides with the associated Gauss map for $\Omega-$surfaces, which was recently determined in \cite{pember_lie_applicable}.
%
%
\subsection{G-systems}
Although it is quite popular to distinguish surface classes by the existence of particular Combescure transforms, there are only few results in this direction in the realm of triply orthogonal systems (for example, the Combescure transforms of the confocal quadrics \cite{salkowski}). 

However, it is a prominent question to analyse systems consisting of particular coordinate surfaces: for example, systems with isothermic coordinate surfaces \cite[Chap.\,III--V]{Darboux_ortho} or Bianchi's systems consisting of coordinate surfaces with constant Gaussian curvature (cf.\,\cite{bianchi_type_guichard, salkowski}). Clearly, these systems are all built from G-surfaces (cf.\,Propositions \ref{prop_G_isothermic} and \ref{prop_G_Guichard}).
\\\\Here, we are interested in both aspects and investigate triply orthogonal systems consisting of G-surfaces that admit Combescure transforms constituted by the associated and dual surfaces. In particular, it will turn out that Guichard nets belong to this class of triply orthogonal systems.
\begin{defi}
A triply orthogonal system $f$ that admits non-trivial Combescure transforms $\hat{f}$ and $f^\star$ such that the principal curvatures of the coordinate surfaces satisfy
\begin{equation}\label{cond_G_system}
\frac{1}{\kappa_{ij}\kappa_{ik}^\star}+\frac{1}{\kappa_{ik}\kappa_{ij}^\star}=-\frac{2}{\hat{\kappa}_{ij}\hat{\kappa}_{ik}}
\end{equation}
will be called a \emph{G-system}. The Combescure transforms $\hat{f}$ and $f^\star$ are said to be an \emph{associated system} and a \emph{dual system}, respectively.
\end{defi}
\noindent If we describe the Combescure transform $\hat{f}$ of a G-system $f$ by
\begin{equation}\label{cond_asso_f}
d \hat{f} = h_1 \partial_x f dx + h_2 \partial_y f dy + h_3 \partial_z f dz, 
\end{equation}
we deduce, using similar arguments as in the proof of Theorem~\ref{thm_G_dual} that there exist functions $\psi_i=\psi_i(x_i)$ such that
\begin{equation*}
(h_j - h_k)^2 = \frac{1}{\psi_k H_k^2}+\frac{1}{\psi_j H_j^2}.
\end{equation*}
Hence, the coordinate surfaces of a G-system are G-surfaces and $\hat{f}$ consists of the corresponding associated surfaces.

The corresponding dual system $f^\star$ is then given by  
\begin{equation}\label{star_from_assoc}
df^\star = h_1^\star \ \partial_x f dx + h_2^\star \ \partial_y f dy + h_3^\star \  \partial_z f dz, \ \text{where} \ h_i^\star=-h_i^2+\frac{1}{\psi_i H_i^2}
\end{equation} 
and therefore the coordinate surfaces of $f^\star$ are dual surfaces of the coordinate surfaces of $f$. In particular, the existence of a dual system $f^\star$ just follows from the existence of the associated system $\hat{f}$.

In summary, we have proven
\begin{prop}
The coordinate surfaces of a G-system are G-surfaces and the Combescure transforms $\hat{f}$ and $f^\star$ satisfying (\ref{cond_G_system}) consist of the associated and dual surfaces of the coordinate surfaces of $f$, respectively.
\end{prop}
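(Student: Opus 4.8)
The plan is to unpack the definition of a G-system and carry over the argument from the proof of Theorem~\ref{thm_G_dual}, now applied simultaneously to each of the three families of coordinate surfaces $x_i=const$. First I would write the associated system $\hat f$ via equation~(\ref{cond_asso_f}), so that along the coordinate surface $x_i=const$ the restriction $\hat f|_{x_i=const}$ is the Combescure transform of $f|_{x_i=const}$ determined by the pair of functions $(h_j,h_k)$. Likewise the candidate dual system is given by~(\ref{star_from_assoc}). The condition~(\ref{cond_G_system}) is, for each fixed $i$, exactly the relation~(\ref{G_cond_curv}) for the coordinate surface $x_i=const$ with associated surface $\hat f|_{x_i=const}$; non-triviality of the triplet $(f,\hat f,f^\star)$ as triply orthogonal systems entails non-triviality of each restricted triplet, so Theorem~\ref{thm_G_dual} applies to every coordinate surface and yields that each coordinate surface of $f$ is a G-surface.

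The core computational step is to show that~(\ref{cond_G_system}) forces, for each $i$, the existence of a function $\psi_i=\psi_i(x_i)$ with $(h_j-h_k)^2 = \tfrac{1}{\psi_j H_j^2}+\tfrac{1}{\psi_k H_k^2}$, and moreover that the three functions $\psi_1,\psi_2,\psi_3$ can be chosen \emph{consistently}, i.e.\ a single $\psi_j$ works in the two G-conditions in which the index $j$ participates. This is where I expect the main obstacle to lie. For a single surface the argument in the proof of Theorem~\ref{thm_G_dual} produces two functions $\psi_1(x),\psi_2(y)$ from the compatibility conditions~(\ref{comp_comb}); here I have three families and the functions produced from the $(i,j)$-, $(j,k)$- and $(k,i)$-analyses must agree on overlaps. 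I would extract them by the same route: introduce $h',l',h'',l''$ (now with three pairs of such quantities, one per coordinate surface) writing $H_j = h'_{(i)}\hat H_j$ etc., deduce from~(\ref{cond_G_system}) the analogue of~(\ref{cond_hs}), and integrate using the triply orthogonal compatibility condition~(\ref{comb_cond}) for $\hat f$ in place of~(\ref{comp_comb}). The key point is that $\partial_i(h_j h_j^{\star\text{-factor}})$ only involves $\partial_i\ln\hat H_j$, and integrating along $x_i$ leaves a function of the remaining variables; cross-checking the three resulting first integrals using the first system of Lam\'e's equations~(\ref{lame_first}) for $\hat f$ forces them to depend on $x_i$ alone and to be mutually compatible, giving well-defined $\psi_i(x_i)$.

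Once the $\psi_i$ are in hand, $h_i^\star := -h_i^2 + \tfrac{1}{\psi_i H_i^2}$ as in~(\ref{star_from_assoc}) defines, family by family, exactly the dual surface $f^\star|_{x_i=const}$ from the proof of Theorem~\ref{thm_G_dual} (after matching $\psi_i$ with the coordinate-independent constants $c$ and the $\varepsilon$'s; a reparametrization $x_i\mapsto\chi_i(x_i)$ absorbs $\psi_i$ into the $H_i$ as in~(\ref{G_repara})). It then remains to check that these $h_i^\star$ satisfy the Combescure compatibility conditions~(\ref{comb_cond}) for $f$, so that $df^\star=\sum_i h_i^\star\,\partial_i f\,dx_i$ is genuinely integrable and defines a triply orthogonal system Combescure related to $f$; this is the analogue of the integrability check in the proof of Theorem~\ref{thm_G_dual}, done now for all three indices, and it follows from the $\psi_i$-relations together with Lam\'e's equations for $f$. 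Finally, non-triviality of $(\hat f, f^\star)$ as a triply orthogonal pair follows from non-triviality on at least one coordinate surface, completing the proof that $f^\star$ is a bona fide dual system consisting of the dual surfaces of the coordinate surfaces of $f$.
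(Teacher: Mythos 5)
Your proposal follows essentially the same route as the paper: the paper likewise reads condition (\ref{cond_G_system}) family-by-family as the surface relation (\ref{G_cond_curv}), invokes the converse direction of Theorem~\ref{thm_G_dual} to produce the first integrals $\psi_i=\psi_i(x_i)$ with $(h_j-h_k)^2=\tfrac{1}{\psi_jH_j^2}+\tfrac{1}{\psi_kH_k^2}$, and then reads off (\ref{star_from_assoc}). The one point you rightly flag as the main obstacle --- that each $\psi_j$ obtained from the two surface families in which the index $j$ participates must agree and hence depend on $x_j$ alone --- is exactly the detail the paper leaves implicit behind ``similar arguments,'' and your mechanism (two independent first integrals of the same quantity $\hat H_j^2+H_jH_j^\star$ along $x_i$ and $x_k$) resolves it correctly.
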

%
%
%
%
%
%
\section{Guichard nets}\label{section_guichard_nets}
\noindent Recall from Section \ref{section_tos} that a Guichard net $(f,H_1,H_2,H_3)$ is a 0-system; hence, by setting $(\varepsilon_1, \varepsilon_2, \varepsilon_3)=(1,1,-1)$, these systems satisfy the \emph{Guichard condition}
\begin{equation}\label{Guichard_cond_eps}
\varepsilon_1 H_1^2 + \varepsilon_2 H_2^2 + \varepsilon_3 H_3^2=0
\end{equation}
and, by differentiating, the following relation
\begin{equation}\label{diff_guichard_cond}
\varepsilon_i \partial_i H_i + \varepsilon_j H_j \beta_{ij}  + \varepsilon_k H_k\beta_{ik} =0
\end{equation}
between the Lam\'e coefficients and the rotational coefficients.
\begin{thm}\label{Guichard_G_surfaces}
A Guichard net is a G-system and, in particular, the coordinate surfaces of a Guichard net are G-surfaces.
\end{thm}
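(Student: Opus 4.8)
The plan is to exhibit, for a given Guichard net $(f,H_1,H_2,H_3)$, an explicit associated system $\hat f$ via Proposition~\ref{combescure}, and then invoke the G-system machinery (in particular equation (\ref{star_from_assoc})) to produce the dual system automatically. The natural candidate for $\hat f$ comes from the differentiated Guichard relation (\ref{diff_guichard_cond}): since $\varepsilon_i\partial_i H_i = -\varepsilon_j H_j\beta_{ij}-\varepsilon_k H_k\beta_{ik}$ and $\kappa_{ij}=-\beta_{ij}/H_j$, I expect the associated system to be the one whose Lamé coefficients are (up to sign) $\hat H_i = \varepsilon_i\partial_i H_i$, i.e. the Combescure transform determined by functions $h_i$ with $h_i H_i = \varepsilon_i\partial_i H_i$, so that $h_i = \varepsilon_i\beta_{ii}$ in the notation $\beta_{ii}=\partial_i\ln H_i$. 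Concretely I would set
\begin{equation*}
d\hat f = h_1\,\partial_x f\,dx + h_2\,\partial_y f\,dy + h_3\,\partial_z f\,dz, \qquad h_i := \varepsilon_i\,\partial_i\ln H_i,
\end{equation*}
and first verify that the $h_i$ satisfy the Combescure compatibility condition (\ref{comb_cond}), $\partial_i h_j = (h_i-h_j)\partial_i\ln H_j$. This should follow by differentiating the definition of $h_j$ and using the first system of Lamé's equations (\ref{lame_first}) together with the Guichard condition; it is a short computation.

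The heart of the matter is then to check the G-system curvature identity (\ref{cond_G_system}),
\begin{equation*}
\frac{1}{\kappa_{ij}\kappa_{ik}^\star}+\frac{1}{\kappa_{ik}\kappa_{ij}^\star}=-\frac{2}{\hat\kappa_{ij}\hat\kappa_{ik}},
\end{equation*}
for the dual system $f^\star$ built from $\hat f$ by (\ref{star_from_assoc}). Equivalently, following the proof of Theorem~\ref{thm_G_dual} and the remark after the definition of G-systems, it suffices to show that there exist functions $\psi_i=\psi_i(x_i)$ with $(h_j-h_k)^2 = \frac{1}{\psi_k H_k^2}+\frac{1}{\psi_j H_j^2}$; then the coordinate surfaces are G-surfaces and the dual system exists automatically. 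So the real computational task is: plug in $h_i=\varepsilon_i\partial_i\ln H_i$, expand $(h_j-h_k)^2$, and show the result equals $\frac{1}{\psi_k H_k^2}+\frac{1}{\psi_j H_j^2}$ for suitable single-variable $\psi_i$ — presumably $\psi_i$ will come out to be constants, or will be constant after the Guichard condition is used. I would first treat one coordinate surface, say $x_1=\text{const}$ with metric $I_1 = H_2^2 dx_2^2 + H_3^2 dx_3^2$, and verify the G-condition (\ref{cond_G}) for it using Proposition~\ref{G_point_invariant}: show that $\sqrt{H_3^2 \pm H_2^2}$ (the sign dictated by the $\varepsilon$'s, so actually $\sqrt{-\varepsilon_2\varepsilon_3}\cdot\sqrt{\text{appropriate combination}}$, which by the Guichard condition equals $H_1$ up to a constant) is a solution of the point equation (\ref{point_TOS}) of that surface. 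But $H_1|_{x_1=\text{const}}$ is indeed a solution of the point equation of the coordinate surface $x_1=\text{const}$ — this is exactly the fact recorded at the end of Section~\ref{section_tos}, ``the metric coefficient $H_i|_{x_i=\text{const}}$ is a solution of the point equation of the coordinate surface $x_i=\text{const}$.'' So Proposition~\ref{G_point_invariant} applies essentially for free, with $\varepsilon$ chosen according to the signature $(1,1,-1)$ and the role of the distinguished coordinate.

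The main obstacle I anticipate is bookkeeping rather than conceptual: one must carry out the three cases $i=1,2,3$ (i.e. the three families of coordinate surfaces) with the asymmetric signs $(\varepsilon_1,\varepsilon_2,\varepsilon_3)=(1,1,-1)$, and check that the chosen associated system $\hat f$ — a single global Combescure transform of $f$ — simultaneously realizes the associated surface for all three coordinate surface families in a way compatible with Proposition~\ref{tos_comb_technical}. In other words, after identifying $\hat f$ surface-family-by-surface-family, I must confirm that the three functions $\varphi_i$ governing the respective coordinate-surface Combescure transforms satisfy the compatibility conditions (\ref{tos_comb_phi}), $\varphi_1+\varphi_2+\varphi_3=0$ and $\partial_j\varphi_j = \varphi_i\partial_j\ln H_k + \varphi_k\partial_j\ln H_i$; the first of these is where the trace-zero Guichard condition (\ref{Guichard_cond_eps}) enters decisively. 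Once these are verified — again using (\ref{diff_guichard_cond}) and Lamé's equations — Proposition~\ref{tos_comb_technical} guarantees that $\hat f$ is a genuine triply orthogonal system, hence an associated system in the sense of the definition, and (\ref{star_from_assoc}) then delivers the dual system $f^\star$, completing the proof that a Guichard net is a G-system and that its coordinate surfaces are G-surfaces.
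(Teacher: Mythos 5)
Your overall route is the paper's: show each coordinate surface is a G-surface via Proposition~\ref{G_point_invariant} (using that $H_i|_{x_i=\text{const}}$ solves the point equation and that the Guichard condition turns $\sqrt{H_j^2+\varepsilon H_k^2}$ into $\pm H_i$), then feed the resulting functions $\varphi_i$ into Proposition~\ref{tos_comb_technical} to assemble the associated surfaces into a single Combescure-transformed system, and finally read off the dual system from (\ref{star_from_assoc}). That skeleton is correct, and your guess that the $\psi_i$ come out constant is also right: the Guichard condition gives $(h_j-h_k)^2=H_i^2/(H_j^2H_k^2)=\varepsilon_j/H_j^2+\varepsilon_k/H_k^2$.

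However, your explicit candidate for the associated system, $h_i:=\varepsilon_i\,\partial_i\ln H_i$, is wrong, and this is a genuine gap rather than bookkeeping. An associated system of a Guichard net must satisfy $h_j-h_i=\varepsilon_kH_k/(H_iH_j)$ (this is exactly what the $\varphi_i=\pm H_i/(H_jH_k)$ produce, and what Theorem~\ref{thm_char_guichard} records), whereas your formula gives $h_j-h_i=\varepsilon_j\partial_j\ln H_j-\varepsilon_i\partial_i\ln H_i$; these disagree already for the $6$-sphere coordinates of Example~\ref{example_guichard}, where the correct difference $h_1-h_3$ is the polynomial $(x^2+y^2+2z^2)/\sqrt{2}$ while yours is a rational function vanishing along $x=-2z$. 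Your formula also cannot satisfy the Combescure compatibility (\ref{comb_cond}): that would require control of $\partial_{ij}\ln H_j$, which Lam\'e's first system (\ref{lame_first}) does not provide (it only governs $\partial_{ij}H_k$ for $k\neq i,j$). Finally, a single closed-form expression for the $h_i$ cannot be right in principle, since the associated systems form a $1$-parameter family $h_i\mapsto h_i+c$; the correct $h_i$ are obtained only by \emph{integrating} the system (\ref{system_associated}) induced by the $\varphi_i$. If you delete the candidate and instead let Proposition~\ref{tos_comb_technical} construct $\hat f$ from the $\varphi_i$ — which is what the second half of your plan already does — the argument closes and coincides with the paper's proof.
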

\begin{proof}
Let $(f, H_1, H_2, H_3)$ be a Guichard net. From Lam\'{e}'s first system (\ref{lame_first}) and the Guichard condition we learn that the functions
\begin{equation*}
\pm\sqrt{H_1^2+H_2^2}, \ \ \pm\sqrt{H_3^2-H_1^2} \ \ \ \text{and } \ \ \pm\sqrt{H_3^2-H_2^2}
\end{equation*}
are solutions of the point equation of the corresponding coordinate surface family. Thus, by Proposition \ref{G_point_invariant}, the coordinate surfaces of a Guichard net are G-surfaces and the functions
\begin{equation*}
\varphi_3:=  \frac{H_3}{H_1H_2}, \ \ \varphi_2:= -\frac{H_2}{H_1H_3}, \ \ \varphi_1:=- \frac{H_1}{H_2H_3}
\end{equation*}
induce the corresponding Combescure transformations that transform any coordinate surface to the associated surfaces.

Moreover, Proposition \ref{tos_comb_technical} guarantees that, if suitably chosen, the associated surfaces constitute again a triply orthogonal system: by the Guichard condition and (\ref{diff_guichard_cond}), we obtain
\begin{equation*}
\begin{aligned}
\varphi_1+\varphi_2+\varphi_3=0, \hspace*{2cm} & 
\\[4pt]\partial_j\varphi_j - \varphi_i \partial_j \ln H_k - \varphi_k \partial_j \ln H_i=&
\\\frac{1}{H_iH_jH_k} \{\varepsilon_jH_j \partial_j H_j + \varepsilon_kH_k\partial_j H_k + \varepsilon_i H_i \partial_j H_i\}=& \ 0
\end{aligned}
\end{equation*}
and therefore the functions $\varphi_1$, $\varphi_2$ and $\varphi_3$ induce a 1-parameter family of Combescure transformed systems. By construction, these are the associated systems of the Guichard net $f$.

Thus, equation (\ref{star_from_assoc}) describes the corresponding dual systems of $f$ and a Guichard net is indeed a G-system.
\end{proof}
To gain more insights into the geometry of Guichard nets, we explicitly construct its associated systems:

\begin{prop}\label{prop_associated_tos}
Let $f$ be a Guichard net, then the associated systems $\hat{f}$ of $f$ are 1-systems given by  
\begin{equation}\label{equ_hat}
d\hat{f}=h_1 \partial_x f dx + h_2 \partial_y f dy + h_3 \partial_z f dz,
\end{equation}
where the functions $h_1, h_2$ and $h_3$ are determined by 
\begin{equation}
\begin{aligned}\label{system_associated}
\partial_x h_3&=-\frac{H_2}{H_3}\kappa_{13},
\\\partial_y h_3&=\phantom{-}\frac{H_1}{H_3}\kappa_{23},
\\\partial_z h_3&= -\frac{H_1H_2}{H_3^2} (\kappa_{31}-\kappa_{32}),
\\h_1&=h_3+\frac{H_2}{H_1H_3},
\\[6pt] h_2&=h_3-\frac{H_1}{H_2H_3}.
\end{aligned}
\end{equation}
\end{prop}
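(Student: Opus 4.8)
The statement unwinds Theorem~\ref{Guichard_G_surfaces}. There the associated systems $\hat f$ of a Guichard net $(f,H_1,H_2,H_3)$ were obtained as the Combescure transforms
\begin{equation*}
d\hat f = h_1\,\partial_x f\,dx + h_2\,\partial_y f\,dy + h_3\,\partial_z f\,dz
\end{equation*}
built, via Proposition~\ref{tos_comb_technical}, from the functions $\varphi_1=-H_1/(H_2H_3)$, $\varphi_2=-H_2/(H_1H_3)$, $\varphi_3=H_3/(H_1H_2)$, where $\varphi_i$ induces the Combescure transformation of the coordinate surface family $x_i=\mathrm{const}$. The plan is simply to make the coefficient functions $h_1,h_2,h_3$ explicit and then to read off the trace of the resulting metric.

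First I would recall how the $h_i$ arise in the proof of Proposition~\ref{tos_comb_technical}: on each coordinate surface the two coefficient functions attached to the relevant $\varphi$ are governed by the surface formulas (\ref{comb_from_phi}) and (\ref{comp_comb}) and differ by that $\varphi$. Reading this off on the two coordinate surfaces on which $h_3$ occurs gives, with the conventions fixed in Theorem~\ref{Guichard_G_surfaces}, the algebraic relations $h_1-h_3=-\varphi_2$ and $h_3-h_2=-\varphi_1$, which are exactly the last two lines of (\ref{system_associated}) and which contain no integration constant. For the derivatives of $h_3$ the same formulas yield $\partial_x h_3=-\varphi_2\,\partial_x\ln H_3$, $\partial_y h_3=\varphi_1\,\partial_y\ln H_3$ and $\partial_z h_3=\varphi_2\,\partial_z\ln(H_1\varphi_2)$. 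Substituting the explicit $\varphi_1,\varphi_2$ and using $\partial_iH_j=H_i\beta_{ij}$ together with $\kappa_{ij}=-\beta_{ij}/H_j$ turns the first two of these into $\partial_x h_3=-(H_2/H_3)\kappa_{13}$ and $\partial_y h_3=(H_1/H_3)\kappa_{23}$.

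The third derivative is the only step where the Guichard condition is genuinely used. Expanding $\partial_z h_3=\varphi_2\,\partial_z\ln(H_1\varphi_2)$ produces a term in $\partial_z H_3$; I would replace $\partial_zH_3=\partial_3H_3$ by $H_1\beta_{31}+H_2\beta_{32}$, which is relation (\ref{diff_guichard_cond}) for $i=3$ (equivalently, the $x_3$-derivative of $H_1^2+H_2^2=H_3^2$), and then use the algebraic Guichard condition $H_1^2+H_2^2=H_3^2$ to collapse the two $\beta_{32}$-contributions into a single term; rewriting what remains via $\kappa_{31}=-\beta_{31}/H_1$ and $\kappa_{32}=-\beta_{32}/H_2$ gives exactly $\partial_z h_3=-(H_1H_2/H_3^2)(\kappa_{31}-\kappa_{32})$. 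The overdetermined system so obtained for $h_3$ is integrable by Theorem~\ref{Guichard_G_surfaces}, and the single integration constant is the parameter of the $1$-parameter family of associated systems.

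Finally, to see that $\hat f$ is a $1$-system I would compute its Lam\'e coefficients $\hat H_i=|h_i|H_i$ and evaluate
\begin{equation*}
\hat H_1^2+\hat H_2^2-\hat H_3^2 = h_1^2H_1^2+h_2^2H_2^2-h_3^2H_3^2.
\end{equation*}
Substituting $h_1=h_3+H_2/(H_1H_3)$ and $h_2=h_3-H_1/(H_2H_3)$, the terms linear in $h_3$ cancel and the expression reduces to $h_3^2(H_1^2+H_2^2)+(H_1^2+H_2^2)/H_3^2-h_3^2H_3^2$, which equals $1$ by the Guichard condition $H_1^2+H_2^2=H_3^2$; hence $\hat f$ is a $1$-system. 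I expect the $\partial_z h_3$ identity to be the main obstacle: it is the only place using both forms of the Guichard condition, it needs the small algebraic cancellation above, and it requires some care with the orientation and ordering conventions of Theorem~\ref{Guichard_G_surfaces}; all other steps are direct substitutions into formulas already at hand.
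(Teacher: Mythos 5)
Your proposal is correct and follows essentially the same route as the paper: it invokes Theorem~\ref{Guichard_G_surfaces} for the functions $\varphi_i$, extracts the explicit derivatives and differences of the $h_i$ from the system (\ref{system_comb_technical}) in the proof of Proposition~\ref{tos_comb_technical} using both the algebraic and the differentiated Guichard conditions, and verifies the $1$-system property by the same direct substitution. You merely spell out the computations (in particular the $\partial_z h_3$ identity) that the paper leaves implicit, and your sign and index bookkeeping checks out.
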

\begin{proof} 
Suppose that $(f,H_1,H_2,H_3)$ is a Guichard net. From the proof of Theorem~\ref{Guichard_G_surfaces}, we know that $f$ is a G-system with associated systems induced by the functions
\begin{equation*}
\varphi_3:=  \frac{H_3}{H_1H_2}, \ \ \varphi_2:= -\frac{H_2}{H_1H_3}, \ \ \varphi_1:=- \frac{H_1}{H_2H_3}.
\end{equation*}
Hence, from the system (\ref{system_comb_technical}) in the proof of Proposition \ref{tos_comb_technical} and the Guichard conditions (\ref{Guichard_cond_eps}) and  (\ref{diff_guichard_cond}), we obtain the asserted system (\ref{system_associated}).
\\\\Moreover, these constructed associated systems are 1-systems, since
\begin{align*}
\hat{H}_1^2+\hat{H}_2^2-\hat{H}_3^2= (h_1^2-h_3^2)H_1^2+(h_2^2-h_3^2)H_2^2=1.
\end{align*}
\end{proof}
\noindent Conversely, from the existence of a particular Combescure transformed triply orthogonal system, we can conclude that the Guichard condition is satisfied. 

Thus, we obtain a characterization of Guichard nets:
\begin{thm}\label{thm_char_guichard}
A triply orthogonal system $(f, H_1, H_2, H_3)$ is a Guichard net if and only if there exists a Combescure transformed system $(\hat{f}, \hat{H}_1, \hat{H}_2, \hat{H}_3)$ such that the relations 
\begin{equation}
\begin{aligned}\label{char_guichard}
H_i \hat{H}_j - H_j \hat{H}_i = \varepsilon_k H_k
\end{aligned}
\end{equation}
between the Lam\'{e} coefficients of $f$ and $\hat{f}$ hold. In this case, the system $\hat{f}$ is an associated system of the Guichard net. 
\end{thm}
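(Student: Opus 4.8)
The plan is to prove both directions by translating the stated Lamé-coefficient relations into the function data $(h_1,h_2,h_3)$ that govern Combescure transformations, and then exploiting the compatibility conditions (\ref{comb_cond}) together with Lamé's first system.

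\emph{From a Guichard net to the associated system.} Suppose $(f,H_1,H_2,H_3)$ is a Guichard net. By Theorem~\ref{Guichard_G_surfaces} it is a $G$-system, and Proposition~\ref{prop_associated_tos} produces an explicit associated system $\hat f$ with Lamé coefficients $\hat H_i = h_i H_i$, where the $h_i$ are given by (\ref{system_associated}). The first step is a direct computation: using $h_1 = h_3 + \frac{H_2}{H_1 H_3}$ and $h_2 = h_3 - \frac{H_1}{H_2 H_3}$, one finds
\[
H_i \hat H_j - H_j \hat H_i = H_i H_j (h_j - h_i),
\]
and plugging in the differences $h_2-h_1 = -\frac{H_1^2+H_2^2}{H_1H_2H_3}=-\frac{H_3}{H_1H_2}$ (using the Guichard condition $H_1^2+H_2^2=H_3^2$), $h_3-h_1 = -\frac{H_2}{H_1H_3}$, $h_3-h_2 = \frac{H_1}{H_2H_3}$, yields exactly $H_i\hat H_j - H_j\hat H_i = \varepsilon_k H_k$ for the three index pairs, with $(\varepsilon_1,\varepsilon_2,\varepsilon_3)=(1,1,-1)$. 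So the relations (\ref{char_guichard}) hold for this particular $\hat f$.

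\emph{From the relations (\ref{char_guichard}) to the Guichard condition.} Conversely, assume $\hat f$ is a Combescure transform of $f$ with $\hat H_i = h_i H_i$ for functions $h_i$ satisfying (\ref{comb_cond}), and suppose (\ref{char_guichard}) holds, i.e. $H_i H_j(h_j - h_i) = \varepsilon_k H_k$ for each of the three index pairs. First I would solve this linear system for the differences: from $H_1H_2(h_2-h_1)=\varepsilon_3 H_3=-H_3$, $H_2H_3(h_3-h_2)=\varepsilon_1 H_1=H_1$, $H_1H_3(h_3-h_1)=\varepsilon_2 H_2=H_2$, one reads off $h_2-h_1 = -\tfrac{H_3}{H_1H_2}$, $h_3-h_2 = \tfrac{H_1}{H_2H_3}$, $h_3-h_1 = \tfrac{H_2}{H_1H_3}$; these three are automatically consistent since $(h_3-h_1)-(h_3-h_2) = h_2-h_1$ forces $\tfrac{H_2}{H_1H_3} - \tfrac{H_1}{H_2H_3} = -\tfrac{H_3}{H_1H_2}$, which on clearing denominators is precisely $H_2^2 - H_1^2 = -H_3^2$, i.e. $H_1^2+H_2^2-H_3^2 = 0$. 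Thus the Guichard condition drops out algebraically from the mere solvability of (\ref{char_guichard}) with the prescribed signs $(\varepsilon_i)$. That $\hat f$ is then an associated system follows because the differences $h_j-h_i$ coincide with the functions $\varphi_k = \pm\tfrac{H_k}{H_iH_j}$ appearing in the proof of Theorem~\ref{Guichard_G_surfaces}, so $\hat f$ is exactly the associated system constructed there.

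\emph{Main obstacle.} The one point requiring care is the converse direction: one must check that the system (\ref{char_guichard}) is not vacuous or over-determined — that is, that a Combescure transform $f$ with $\hat H_i/H_i = h_i$ and $h_j - h_i = \varphi_k$ genuinely exists. But this is guaranteed: once the Guichard condition is in hand, the functions $\varphi_i$ satisfy the hypotheses (\ref{tos_comb_phi}) of Proposition~\ref{tos_comb_technical} (as verified in the proof of Theorem~\ref{Guichard_G_surfaces}), so the associated system exists, and conversely any $\hat f$ realizing (\ref{char_guichard}) must have these $h_i$ up to a common additive constant, hence be (a translate of) that associated system. So the real content is the purely algebraic extraction of $H_1^2+H_2^2-H_3^2=0$ from the consistency of the three relations, with the differentiated identities (\ref{diff_guichard_cond}) and Lamé's first system (\ref{lame_first}) ensuring the compatibility conditions (\ref{comb_cond}) are met throughout.
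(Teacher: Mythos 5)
Your overall strategy coincides with the paper's: the forward direction invokes the explicit associated system of Proposition \ref{prop_associated_tos}, and the converse extracts the Guichard condition algebraically from the consistency (telescoping) of the three relations $h_j-h_i=\varepsilon_k H_k/(H_iH_j)$. The forward direction is correct as written.

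There is, however, a sign error in your converse that breaks the key computation as displayed. By the paper's index convention, $(i,j,k)$ runs over cyclic permutations of $(1,2,3)$, so the third relation is $H_3\hat{H}_1-H_1\hat{H}_3=\varepsilon_2H_2$, giving $h_1-h_3=\tfrac{H_2}{H_1H_3}$, i.e.\ $h_3-h_1=-\tfrac{H_2}{H_1H_3}$ --- which is also what you yourself use in the forward direction. In the converse you instead write $h_3-h_1=+\tfrac{H_2}{H_1H_3}$, and the resulting identity $\tfrac{H_2}{H_1H_3}-\tfrac{H_1}{H_2H_3}=-\tfrac{H_3}{H_1H_2}$ clears to $H_2^2-H_1^2=-H_3^2$, which is \emph{not} the Guichard condition; your subsequent ``i.e.\ $H_1^2+H_2^2-H_3^2=0$'' does not follow from it. With the correct sign the telescoping identity $(h_3-h_2)+(h_1-h_3)+(h_2-h_1)=0$ becomes $\tfrac{1}{H_1H_2H_3}\{H_1^2+H_2^2-H_3^2\}=0$, which is exactly the paper's one-line argument. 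Once that sign is repaired, the rest of your proposal (including the observation that (\ref{char_guichard}) pins down the differences $h_j-h_i$, hence the $h_i$ up to a common additive constant by the compatibility conditions (\ref{comb_cond}), so that any such $\hat f$ lies in the family of associated systems) is sound and in fact slightly more complete than the paper's proof, which does not spell out why an arbitrary $\hat f$ satisfying (\ref{char_guichard}) must be an associated system.
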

\begin{proof}
Suppose that $\hat{f}$ is a Combescure transform of $f$ fulfilling (\ref{char_guichard}) and define $h_i:=\frac{\hat{H}_i}{H_i}$. Then we obtain
\begin{equation*}
\frac{1}{H_1H_2H_3} \{ H_1^2+ H_2^2-H_3^2 \}=h_3-h_2+h_1-h_3-h_1+h_2=0
\end{equation*}
and $f$ is indeed a Guichard net.

Conversely, let $f$ be a Guichard net. Then, by Proposition \ref{prop_associated_tos}, there exist associated systems given by (\ref{system_associated}) which fulfill
\begin{equation*}
h_j-h_i=\frac{\varepsilon_k H_k}{H_i H_j}.
\end{equation*}
Hence, any associated system yields a sought-after Combescure transform and the claim is proven.
\end{proof}
\noindent From Proposition \ref{prop_associated_tos}, we obtain, similar to the case of G-surfaces, the following relation between the associated systems: if $\hat{f}_0$ is an associated system induced by the functions $h_1, h_2$ and $h_3$ satisfying system (\ref{system_associated}), then the 1-parameter family $\hat{f}_c$ of associated systems are induced by the functions $h_{ic}:=h_i+c$, where $c \in \mathbb{R}$. Hence, we obtain
\begin{equation*}
\hat{f}_c=\hat{f}_0+ c f.
\end{equation*} 
As a consequence, we conclude that the dual systems of a Guichard net are given by
\begin{equation}\label{dual_guichard_form}
df_c^\star = h_{1c}^{\star} \partial_x f dx + h_{2c}^\star \partial_y f dy + h_{3c}^\star \partial_z f dz, \ \text{where } h_{ic}^\star=-(h_i+c)^2+\frac{\varepsilon_i}{H_i^2}.
\end{equation}
The characterization of Guichard nets given in Theorem~ \ref{thm_char_guichard} then shows, that the dual systems provide a 1-parameter family of (generically) new Guichard nets: 
\begin{prop}
The dual systems of a Guichard net are again Guichard nets.
\end{prop}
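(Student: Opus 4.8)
The plan is to reduce the statement to the characterization of Guichard nets already proven in Theorem~\ref{thm_char_guichard}. That is, rather than checking the trace-zero condition for the dual system directly from its Lam\'e coefficients, I will exhibit an explicit Combescure transform of the dual system $f_c^\star$ whose Lam\'e coefficients satisfy the relations \eqref{char_guichard}, with the roles of $f$ and $\hat f$ replaced by $f_c^\star$ and some new associated system.

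The key steps, in order. First, I would use \eqref{dual_guichard_form}: the dual system $f_c^\star$ has Lam\'e coefficients $H_i^\star = |h_{ic}^\star|^{1/2}\,H_i$ with $h_{ic}^\star = -(h_i+c)^2 + \varepsilon_i/H_i^2$, where $h_i = h_{i0}$ solve \eqref{system_associated} and $h_{ic}=h_i+c$. Since $f$ is an associated system of itself shifted, and since by Theorem~\ref{Guichard_G_surfaces} the dual construction is built from Proposition~\ref{tos_comb_technical}, the natural candidate for an associated system of $f_c^\star$ is $f$ itself (or an appropriate shift), reflecting the duality $(f_{\bar c}^\star)_0^\star = f$ already noted for G-surfaces. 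Concretely, I would take as the new Combescure transform the system with Lam\'e coefficients built from the functions $\tilde h_i := h_{ic}/h_{ic}^\star$ up to sign, so that $\tilde h_i H_i^\star$ recovers (a multiple of) $H_i$. Second, I would verify the analogue of the three relations \eqref{char_guichard} for the pair $(f_c^\star, f)$: using $h_{jc}-h_{ic} = h_j - h_i = \varepsilon_k H_k/(H_i H_j)$ from the proof of Theorem~\ref{thm_char_guichard}, together with the identity $h_{ic}^\star - h_{jc}^\star = -(h_{ic}^2 - h_{jc}^2) + \varepsilon_i/H_i^2 - \varepsilon_j/H_j^2$, one computes $H_i^\star \hat H_j^{\mathrm{new}} - H_j^\star \hat H_i^{\mathrm{new}}$ and checks it equals $\varepsilon_k H_k^\star$ after using the Guichard condition $\varepsilon_1 H_1^2 + \varepsilon_2 H_2^2 + \varepsilon_3 H_3^2 = 0$ and its differentiated form \eqref{diff_guichard_cond}. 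Alternatively, and perhaps more cleanly, I would verify directly that $(H_1^\star)^2 + (H_2^\star)^2 - (H_3^\star)^2$ is constant: expanding gives $\sum_i \varepsilon_i h_{ic}^\star H_i^2 = \sum_i \varepsilon_i(-(h_i+c)^2 H_i^2 + \varepsilon_i) = 3 - \sum_i \varepsilon_i (h_i+c)^2 H_i^2$, and one shows the last sum is constant in $(x,y,z)$ by differentiating and invoking Lam\'e's first system \eqref{lame_first}, the equations \eqref{system_associated} for the $h_i$, and the Guichard relations. This shows $f_c^\star$ is a $\chi$-system for a constant $\chi$ depending on $c$; then rescaling the ambient coordinates (or noting that the Guichard condition is the vanishing-trace case and using the inversion/scaling invariance discussed in Section~\ref{section_tos}) identifies it as a genuine Guichard net, or one simply notes that $\chi=0$ after the correct normalization.

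Actually the cleanest route: since $f_c^\star$ admits $f$ (suitably reparametrized) as a Combescure transform, I would directly produce the three scalar identities \eqref{char_guichard} for this pair and invoke Theorem~\ref{thm_char_guichard} verbatim — this avoids any discussion of the constant $\chi$ and of rescaling, because Theorem~\ref{thm_char_guichard} already encodes that the existence of such a transform is \emph{equivalent} to the $0$-system condition. So the proof is essentially: write down $h_i^{\mathrm{new}} := H_i/H_i^\star$ (the Combescure functions of $f$ viewed as a transform of $f_c^\star$), check they satisfy \eqref{comb_cond} relative to the Lam\'e coefficients $H_i^\star$ (this is automatic since Combescure relatedness is symmetric), and then verify $H_i^\star h_j^{\mathrm{new}} - H_j^\star h_i^{\mathrm{new}} = \pm\varepsilon_k H_k^\star$, i.e. $H_i^\star (H_j/H_j^\star) - H_j^\star(H_i/H_i^\star)$, which after clearing denominators is a polynomial identity in the $h_{ic}, h_{ic}^\star$ and $H_i$ that follows from $h_{jc}-h_{ic}=\varepsilon_k H_k/(H_iH_j)$ and the definition of $h_{ic}^\star$.

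The main obstacle I anticipate is bookkeeping rather than conceptual: one must be careful about the signs $\delta$ arising because Lam\'e coefficients are defined up to sign (the quantities $h_{ic}^\star$ need not be positive, so $H_i^\star$ involves $|h_{ic}^\star|^{1/2}$), and about which shift of $f$ in the parameter family actually serves as the associated system of $f_c^\star$ — the formulas at the end of Section~\ref{section_G_surfaces} for $(f_{\bar c}^\star)_d^\star$ suggest the bookkeeping is delicate but was already worked out there for surfaces. Threading these signs consistently, so that the three relations \eqref{char_guichard} hold simultaneously with a coherent choice of orientations, is the one place where care is genuinely required; everything else is forced by Theorem~\ref{thm_char_guichard}.
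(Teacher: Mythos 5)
Your overall strategy --- reduce the claim to the characterization of Theorem~\ref{thm_char_guichard} by exhibiting a Combescure transform of $f_c^\star$ satisfying \eqref{char_guichard} --- is exactly the route the paper takes. However, your ``cleanest route'' picks the wrong witness. If you take $f$ itself as the Combescure transform of $f_c^\star$ (i.e.\ $h_i^{\mathrm{new}}=H_i/H_i^\star$, so $\tilde H_i=H_i$), then using $h_j-h_i=\varepsilon_k H_k/(H_iH_j)$ and $h_i^\star=-h_i^2+\varepsilon_i/H_i^2$ one finds
\begin{equation*}
H_i^\star H_j-H_j^\star H_i=H_iH_j\bigl(h_i^\star-h_j^\star\bigr)=2\varepsilon_k h_k H_k=2\varepsilon_k\hat H_k,
\end{equation*}
which is \emph{not} of the form $\pm\varepsilon_k H_k^\star$; so Theorem~\ref{thm_char_guichard} cannot be invoked for the pair $(f_c^\star,f)$. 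The correct witness is the \emph{associated} system $\hat f_c$ (with a global sign flip): one shows $H_i^\star\hat H_j-H_j^\star\hat H_i=-\varepsilon_k H_k^\star$, so the transform of $f_c^\star$ with coefficient functions $-\hat H_i/H_i^\star$ satisfies \eqref{char_guichard}. Note that your own first formula $\tilde h_i=h_{ic}/h_{ic}^\star$ in fact produces $\tilde h_iH_i^\star=h_{ic}H_i=\hat H_i$, i.e.\ the associated system --- not ``a multiple of $H_i$'' as you claim --- so the two halves of your proposal name different candidate systems, and only the one you do not ultimately endorse is the right one. (This is consistent with the duality bookkeeping at the end of Section~\ref{section_G_surfaces}: the associated surfaces of $f^\star$ are $\hat f+df^\star$, so $\hat f$, not $f$, is an associated system of $f^\star$; $f$ is its \emph{dual}.)

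Your alternative direct verification also contains an error: the quantity to control is $\sum_i\varepsilon_i(H_i^\star)^2=\sum_i\varepsilon_i(h_{ic}^\star)^2H_i^2$, whereas you expand $\sum_i\varepsilon_i h_{ic}^\star H_i^2$, which is linear rather than quadratic in $h_{ic}^\star$. Moreover, the fallback ``show it is a constant $\chi$ and rescale'' does not close the gap: a $\chi$-system with constant $\chi\neq0$ is not a Guichard net, and rescaling $f\mapsto\lambda f$ only multiplies $\chi$ by $\lambda^2$, so a nonzero constant can never be normalized to zero. One really must prove the vanishing, which is what the identity $H_i^\star\hat H_j-H_j^\star\hat H_i=-\varepsilon_k H_k^\star$ together with Theorem~\ref{thm_char_guichard} delivers.
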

\begin{proof}
Let $f$ be a Guichard net with an associated system $\hat{f}$ and the corresponding
 dual system $f^\star$.  We will show that 
\begin{equation*}
H_i^\star \hat{H}_j - H_j^\star \hat{H}_i = - \varepsilon_k H_k^\star,
\end{equation*} 
hence, a Combescure transformation defined by the functions $-h_1, -h_2$ and $-h_3$ satisfies the conditions (\ref{char_guichard}) of Theorem~\ref{thm_char_guichard} and therefore the dual system $f^\star$ is a Guichard net.

To prove this relation, we use $\varepsilon_i\varepsilon_j=-\varepsilon_k$, formula (\ref{dual_guichard_form}) and the relations 
\begin{equation*}
h_i=h_k +  \frac{\varepsilon_j H_j}{H_i H_k} \ \ \text{and} \ \ h_j = h_k - \frac{\varepsilon_i H_i}{H_j H_k}. 
\end{equation*} 
Then we obtain
\begin{equation*}
\begin{aligned}
H_i^\star \hat{H}_j - H_j^\star \hat{H}_i &= h_ih_j \{ -H_i\hat{H}_j - H_j \hat{H}_i \} + \frac{1}{H_iH_j} \{ \varepsilon_i h_jH_j^2 - \varepsilon_j h_i H_i^2 \}
\\&= \varepsilon_k h_k^2H_k - \frac{1}{H_k}= - \varepsilon_k H_k^\star,
\end{aligned}
\end{equation*}
which completes the proof.
\end{proof}
%
%
%
\subsection{Guichard nets with cyclic associated systems} A triply orthogonal system is \emph{cyclic} if two of the coordinate surface families consist of channel surfaces.

In this subsection we employ the geometric consequences for a Guichard net if all its associated systems are cyclic. It will turn out that these Guichard nets are rather special and contain a family of parallel or totally umbilic coordinate surfaces.

To begin, we recall some properties of such systems (cf.\,\cite{eisenhart_treatise, salkowski}) and remark some immediate relations to their associated systems. 
\\\\If the coordinate surfaces of a Lam\'e family are parallel, then the other two families are developable surfaces formed by the normal lines along the curvature lines of the parallel surface family. Thus, two of the rotational coefficients vanish. Conversely, if for example $\beta_{ji}=\beta_{ki}=0$, then we conclude from (\ref{direction_cos}) and (\ref{recover_f}) that the coordinate surfaces $x_i=const$ are parallel. 

Consequently, a triply orthogonal system contains a family of parallel coordinate surfaces if and only if the corresponding coordinate surfaces of a (hence all) Combescure transforms are also parallel. 
\\\\Moreover, taking into account construction (\ref{system_associated}) for the associated systems, we deduce two characterizations for the Guichard nets under consideration.

In the following paragraphs, the functions $h_1$, $h_2$ and $h_3$ fulfill the system (\ref{system_associated}) and therefore induce an associated system of the Guichard net.
\begin{cor}\label{cor_parallel}
Let $f$ be a Guichard net with associated system $\hat{f}$. Then the following are equivalent:
\begin{itemize}
\item[\emph{(i)}] The coordinate surfaces $x_i=const$ of $f$ are parallel.
\item[\emph{(ii)}]  The coordinate surfaces $x_i=const$ of $\hat{f}$ are parallel.
\item[\emph{(iii)}]  $\partial_j h_i = \partial_k h_i =0$.
\end{itemize} 
\end{cor}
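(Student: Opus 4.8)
The plan is to prove the cycle (i) $\Rightarrow$ (iii) $\Rightarrow$ (ii) $\Rightarrow$ (i), exploiting the fact that $f$ and $\hat f$ share the same rotational coefficients $\beta_{ij}$ (this is what characterizes a Combescure transformation) together with the explicit formulas for the associated system in Proposition~\ref{prop_associated_tos}.

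First I would establish (i) $\Leftrightarrow$ (ii). As recalled in the paragraph preceding the corollary, a triply orthogonal system has the family $x_i=const$ parallel if and only if the two rotational coefficients $\beta_{ji}$ and $\beta_{ki}$ vanish (one direction from the developable-normal-line description, the converse from reading off (\ref{direction_cos}) and (\ref{recover_f})). Since $\hat f$ is a Combescure transform of $f$, the systems have identical rotational coefficients, so $\beta_{ji}=\beta_{ki}=0$ holds for $f$ exactly when it holds for $\hat f$. This is essentially already stated in the text and requires no computation.

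Next I would connect the vanishing of $\beta_{ji},\beta_{ki}$ to condition (iii). Recall $\beta_{ji}=\frac{1}{H_j}\partial_j H_i$ and $\beta_{ki}=\frac{1}{H_k}\partial_k H_i$, so $\beta_{ji}=\beta_{ki}=0$ means $H_i$ depends only on $x_i$. The principal curvatures entering (\ref{system_associated}) are $\kappa_{ij}=-\beta_{ij}/H_j$, and I want to show that when $H_i=H_i(x_i)$ the right-hand sides of the relevant derivative equations for $h_i$ vanish. Here one must be careful which of the three index positions is the "parallel" one, i.e.\ read (\ref{system_associated}) with the cyclic convention and check that $\partial_j h_i$ and $\partial_k h_i$ are each expressed through $\beta_{ji}$, $\beta_{ki}$ (equivalently $\partial_j H_i$, $\partial_k H_i$) and hence vanish. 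For the converse direction within (iii) $\Leftrightarrow$ parallelism, if $\partial_j h_i=\partial_k h_i=0$ then, reading the same equations backwards together with the algebraic relations $h_j-h_i=\varepsilon_k H_k/(H_iH_j)$ from the proof of Theorem~\ref{thm_char_guichard}, one extracts that the corresponding combinations of $\partial_\bullet H_i$ must vanish, giving $\beta_{ji}=\beta_{ki}=0$.

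The main obstacle is purely bookkeeping: Proposition~\ref{prop_associated_tos} writes everything in terms of $h_3$ with a specific index assignment, whereas the corollary is stated for a general index $i$, so one has to invoke the cyclic-permutation agreement to transcribe (\ref{system_associated}) into the form $\partial_j h_i = (\text{multiple of }\beta_{ji})$, $\partial_k h_i = (\text{multiple of }\beta_{ki})$, and verify no extra terms involving $\beta_{jk}$ or $\partial$-derivatives of other $H$'s sneak in (they do not, because of the Guichard relation (\ref{diff_guichard_cond})). Once the equations are correctly transcribed, each implication is immediate, so I would keep the write-up short: state the equivalence (i) $\Leftrightarrow$ (ii) via shared rotational coefficients, then note that (\ref{system_associated}) gives $\partial_j h_i$ and $\partial_k h_i$ as $H_j$- and $H_k$-multiples of $\beta_{ji}$ and $\beta_{ki}$ respectively (using $\kappa_{ij}=-\beta_{ij}/H_j$), whence (ii) $\Leftrightarrow$ (iii).
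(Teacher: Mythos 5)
Your proposal is correct and follows exactly the route the paper intends: the corollary is stated without a separate proof precisely because it is meant to be read off from the two preceding paragraphs (parallelism of the family $x_i=const$ is equivalent to $\beta_{ji}=\beta_{ki}=0$, which is shared by all Combescure transforms, giving (i)$\Leftrightarrow$(ii)) together with system (\ref{system_associated}), whose first two equations exhibit $\partial_j h_i$ and $\partial_k h_i$ as nonvanishing multiples of $\beta_{ji}$ and $\beta_{ki}$, giving the equivalence with (iii).
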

\begin{cor}\label{cor_umbilic}
The coordinate surfaces $x_i=const$ of a Guichard net are totally umbilic if and only if $\partial_i h_i = 0$.
\end{cor}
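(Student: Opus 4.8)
The plan is to compute the diagonal derivative $\partial_i h_i$ of the functions appearing in the associated system (\ref{system_associated}) in closed form, and then to observe that the umbilicity criterion falls out at once: the two principal curvatures of the coordinate surface $x_i=const$ are exactly $\kappa_{ij}$ and $\kappa_{ik}$, so this surface is totally umbilic precisely when $\kappa_{ij}\equiv\kappa_{ik}$.

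For $i=3$ there is nothing to prove, since the third line of (\ref{system_associated}) already reads $\partial_z h_3=-\frac{H_1H_2}{H_3^2}(\kappa_{31}-\kappa_{32})$, and, as the Lam\'e coefficients never vanish, $\partial_3 h_3=0$ holds if and only if $\kappa_{31}=\kappa_{32}$. For general $i$ I would derive the analogous identity
\begin{equation*}
\partial_i h_i=-\frac{H_jH_k}{H_i^2}\,(\kappa_{ij}-\kappa_{ik}),
\end{equation*}
and the most convenient route is via the system (\ref{system_comb_technical}) with the functions $\varphi_i$ of the Guichard net, which gives $\partial_i h_i=\varphi_k\,\partial_i\ln(H_j\varphi_k)=\varphi_k(\partial_i\ln H_k-\partial_i\ln H_i)$ because $H_j\varphi_k=-\varepsilon_kH_k/H_i$. (Equivalently one differentiates the relations $h_1=h_3+\tfrac{H_2}{H_1H_3}$ and $h_2=h_3-\tfrac{H_1}{H_2H_3}$ and inserts $\partial_1 h_3,\partial_2 h_3$ from (\ref{system_associated}).) Then one substitutes $\partial_i H_k=-H_iH_k\kappa_{ik}$, eliminates $\partial_i H_i$ by the differentiated Guichard relation (\ref{diff_guichard_cond}), that is $\partial_i H_i=\varepsilon_i(\varepsilon_jH_j^2\kappa_{ij}+\varepsilon_kH_k^2\kappa_{ik})$, and finally uses the Guichard condition (\ref{Guichard_cond_eps}) in the form $-H_i^2=\varepsilon_i\varepsilon_jH_j^2+\varepsilon_i\varepsilon_kH_k^2$ together with $\varepsilon_i\varepsilon_j\varepsilon_k=-1$ to collapse the remaining quadratic terms, which leaves the displayed formula.

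With this identity the corollary is immediate: because $H_1,H_2,H_3$ have no zeros on $U$, $\partial_i h_i=0$ if and only if $\kappa_{ij}=\kappa_{ik}$, i.e.\ if and only if the coordinate surface $x_i=const$ is totally umbilic. (Note also that $\partial_i h_i$ is unchanged under $h_i\mapsto h_i+c$, so the statement is independent of the chosen associated system.) The only point requiring care is the bookkeeping in the middle step, where both forms of the Guichard condition must be used in the right order so that all the $H_\bullet^2$-factors recombine into the single prefactor $H_jH_k/H_i^2$; this is a short computation rather than a genuine obstacle.
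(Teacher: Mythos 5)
Your proof is correct and follows essentially the route the paper intends: the corollary is meant to be read off from the third equation of (\ref{system_associated}) together with its cyclic analogues, and your identity $\partial_i h_i=-\tfrac{H_jH_k}{H_i^2}(\kappa_{ij}-\kappa_{ik})$ (which I checked via both of your suggested derivations) is exactly that statement for general $i$, with the non-vanishing of the Lam\'e coefficients giving the equivalence with $\kappa_{ij}\equiv\kappa_{ik}$. Your remark that $\partial_i h_i$ is invariant under $h_i\mapsto h_i+c$, so the criterion does not depend on which associated system is chosen, is a worthwhile addition that the paper leaves implicit.
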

\noindent These two characterizations in terms of the functions $h_1, h_2$ and $h_3$ enable us to understand Guichard nets with cyclic associated systems:
\begin{prop}\label{assoc_cyclic}
A Guichard net contains a family of parallel or totally umbilic coordinate surfaces if and only if all its associated systems are cyclic. 
\end{prop}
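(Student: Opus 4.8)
The plan is to reduce the statement to the two characterizations in Corollary~\ref{cor_parallel} and Corollary~\ref{cor_umbilic}, both phrased in terms of the functions $h_1,h_2,h_3$ that satisfy system~(\ref{system_associated}). Recall that a triply orthogonal system is cyclic if two of its coordinate-surface families consist of channel surfaces, i.e.\ surfaces with one family of curvature lines being circles; equivalently one principal curvature is constant along the other curvature direction. So first I would translate "the coordinate surfaces $x_i=const$ of $\hat f$ are channel surfaces" into a condition on the Lamé coefficients $\hat H_i=h_iH_i$ and the rotational coefficients $\beta_{ij}$ (which $\hat f$ shares with $f$). Since the principal curvatures of the coordinate surface $x_i=const$ are $\kappa_{ij}=-\beta_{ij}/H_j$, the channel condition for this surface in, say, the $x_j$-direction is $\partial_j(\hat\kappa_{ik})=0$, i.e.\ $\partial_j\bigl(\beta_{ik}/(h_kH_k)\bigr)=0$.

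Next I would observe that for $\hat f$ to be cyclic, two of the three families $x_i=const$ must be channel surfaces; after relabelling we may assume these are $x_1=const$ and $x_2=const$, so the $x_3$-family is the remaining one. The key computation is then to expand $\partial_j\bigl(\beta_{ik}/(h_kH_k)\bigr)=0$ using the derivative formulas in system~(\ref{system_associated}) together with the Guichard relations~(\ref{Guichard_cond_eps}) and~(\ref{diff_guichard_cond}) and Lamé's first system. I expect that after simplification the channel conditions for the various coordinate surfaces of $\hat f$ force, in combination, either $\partial_j h_i=\partial_k h_i=0$ for some index $i$ (which by Corollary~\ref{cor_parallel} says the $x_i$-family of $f$ is parallel) or $\partial_i h_i=0$ for some $i$ (which by Corollary~\ref{cor_umbilic} says the $x_i$-family of $f$ is totally umbilic). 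The converse direction should be easier: if the $x_i$-family of $f$ is parallel, then by Corollary~\ref{cor_parallel} the $x_i$-family of $\hat f$ is parallel as well, hence the other two families of $\hat f$ are developable surfaces swept by normal lines along curvature lines — and such developables are in particular channel surfaces (tangent developables / cones / cylinders all have a family of rulings, but here the relevant circular structure comes from the parallel foliation), so $\hat f$ is cyclic; and if the $x_i$-family of $f$ is totally umbilic, then $\partial_i h_i=0$ shows $\hat H_i$ factors appropriately and one checks directly from~(\ref{system_associated}) that two families of $\hat f$ acquire a constant principal curvature along one curvature direction, i.e.\ are channel surfaces.

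The main obstacle will be the forward direction: showing that the two channel conditions imposed on $\hat f$ (two families of channel surfaces, so two independent PDEs on $h_1,h_2,h_3$) genuinely collapse to the clean alternative "$\partial_j h_i=\partial_k h_i=0$ for some $i$ \textbf{or} $\partial_i h_i=0$ for some $i$". Concretely, one must carefully use that $\beta_{ik}/(h_kH_k)$ being independent of $x_j$ for two choices of the triple, combined with the non-vanishing of the Lamé coefficients, leaves no room for a "mixed" degenerate configuration; this is where the Guichard condition is essential, since it rigidly links the three $\partial_j h_i$'s via the relations $h_i-h_k=\varepsilon_jH_j/(H_iH_k)$ appearing in Proposition~\ref{prop_associated_tos}. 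I would handle this by a short case analysis on which two families of $\hat f$ are channel surfaces, in each case differentiating the relation $h_i-h_k=\varepsilon_jH_j/(H_iH_k)$ and feeding in the channel PDEs to isolate which of the $h$-derivatives must vanish. Once that dichotomy is established, Corollaries~\ref{cor_parallel} and~\ref{cor_umbilic} finish the proof immediately.
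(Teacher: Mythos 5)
There are genuine gaps here, concentrated in the forward direction. First, your translation of the channel condition is off: for the coordinate surface $x_i=const$ of $\hat f$ to be a channel surface with circular curvature lines in the $x_j$-direction, the condition is that the principal curvature belonging to that direction is constant along \emph{its own} curvature lines, $\partial_j\hat\kappa_{ij}=0$, not $\partial_j\hat\kappa_{ik}=0$; in the form the paper uses, $\hat f_c$ is $x_i$-cyclic precisely when $\partial_i\bigl(\kappa_{ji}/(h_i+c)\bigr)=\partial_i\bigl(\kappa_{ki}/(h_i+c)\bigr)=0$. Starting from $\partial_j\bigl(\beta_{ik}/(h_kH_k)\bigr)=0$ sends the whole computation in the wrong direction. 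Second, and more seriously, your plan works with a single associated system $\hat f$, whereas the hypothesis is that \emph{all} associated systems $\hat f_c=\hat f_0+cf$ are cyclic — and this quantifier is exactly what the paper's proof exploits. Comparing the cyclicity condition for two different values of $c$ first yields $\partial_i\kappa_{ji}=\partial_i\kappa_{ki}=0$, i.e.\ that $f$ itself is $x_i$-cyclic; only then does $\partial_i\bigl(\kappa_{ji}/(h_i+c)\bigr)=0$ collapse to $\kappa_{ji}\,\partial_ih_i=\kappa_{ki}\,\partial_ih_i=0$, which gives the clean alternative (parallel via $\kappa_{ji}=\kappa_{ki}=0$, or totally umbilic via $\partial_ih_i=0$ and Corollary \ref{cor_umbilic}). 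With only one cyclic associated system you merely obtain $h_i\partial_i\kappa_{ji}=\kappa_{ji}\partial_ih_i$, from which the dichotomy does not follow; massaging the relations $h_i-h_k=\varepsilon_jH_j/(H_iH_k)$ will not substitute for varying $c$.

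In the converse direction the parallel case is fine, but the totally umbilic case needs an input you do not mention: from $\partial_ih_i=0$ alone you cannot conclude that $\hat\kappa_{ji}=\kappa_{ji}/(h_i+c)$ is independent of $x_i$ — you also need $\partial_i\kappa_{ji}=0$, i.e.\ that a Guichard net with a totally umbilic coordinate surface family is itself a cyclic system. The paper imports this as a known fact about Guichard nets; it is not a formal consequence of Corollary \ref{cor_umbilic} or of system (\ref{system_associated}), so your ``one checks directly'' hides a real step.
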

\begin{proof}
Recall that a triply orthogonal system is $x_i$-cyclic, where the $x_i$-trajectories provide the circular direction, if and only if 
\begin{equation*}
0=\partial_i \kappa_{ji} = \partial_i \kappa_{ki} \ \ \Leftrightarrow \ \ 0=H_i\partial_i \beta_{ji} - \beta_{ji} \partial_iH_i =  H_i\partial_i \beta_{ki} - \beta_{ki} \partial_iH_i.
\end{equation*}
Thus, suppose that the associated systems are $x_i$-cyclic and denote the Lam\'{e} coefficients of them by $\hat{H}_i^c=(h_i+c)H_i$, where $c \in \mathbb{R}$. Then, we obtain 
\begin{align*}
\text{for } c=0: \ & h_iH_i \partial_i \beta_{ji}-\beta_{ji} \partial_i (h_iH_i)=h_iH_i \partial_i \beta_{ki}-\beta_{ki} \partial_i (h_iH_i)=0 \ \ \text{and}
\\[6pt]\text{for } c \neq 0: \ & h_iH_i \partial_i \beta_{ji}-\beta_{ji} \partial_i (h_iH_i) + cH_i \partial_i \beta_{ji} - c\beta_{ji}\partial_i H_i=
\\ & h_iH_i \partial_i \beta_{ki}-\beta_{ki} \partial_i (h_iH_i) + cH_i \partial_i \beta_{ki} - c\beta_{ki}\partial_i H_i=0,
\end{align*}
which shows that the corresponding Guichard net is $x_i$-cyclic. Therefore, we deduce from the circularity of the cyclic associated systems that
\begin{align*}
0&=\partial_i \kappa_{ji}^{c\star}= \partial_i \frac{\kappa_{ji}}{h_i +c},
\\0&=\partial_i \kappa_{ki}^{c\star}= \partial_i \frac{\kappa_{ki}}{h_i +c}.
\end{align*}
As a consequence of the Corollaries \ref{cor_parallel} and \ref{cor_umbilic}, the coordinate surfaces $x_i=const$ are parallel (if $\kappa_{ji}=\kappa_{ki}=0$) or totally umbilic (if $\partial_i h_i =0$).  

Conversely, assume that the Guichard net has a family of parallel coordinate surfaces, then, by Corollary \ref{cor_parallel}, the corresponding coordinate surfaces of the associated systems are also parallel and therefore the associated systems are cyclic. 

In the other case, if the coordinate surfaces $x_i=const$ of the Guichard net are totally umbilic, then the Guichard net is a cyclic system (cf.\,\cite[\S 4]{uhj_guichard}) and, by Corollary \ref{cor_umbilic}, we obtain $h_i=h_i(x_j, x_k)$. Thus, the principal curvatures $\kappa_{ji}^\star$ and $\kappa_{ki}^\star$ do not depend on $x_i$ and therefore the associated systems are $x_i$-cyclic.   
\end{proof}
\noindent Therefore, the associated systems of a Guichard net are triply orthogonal systems consisting of Dupin cyclides if and only if all coordinate surface families of the Guichard net are totally umbilic or parallel. 

This leads to an interesting example which shows that, in general, the associated systems do not admit a reparametrization into a Guichard net. 
\begin{ex}\label{example_guichard}
The so-called 6-sphere coordinates, obtained as inversion of the Cartesian coordinates and consisting of spheres that are all tangent to a fixed point, form a Guichard net. It is determined by the induced metric
\begin{equation*}
I=\frac{1}{(x^2+y^2+2z^2)^2} \big\{ dx^2 + dy^2 + 2 dz^2 \big\}.
\end{equation*} 
By solving system (\ref{system_associated}), we conclude that the induced metrics of the associated systems become
\begin{equation*}
\hat{I}_c= \Big \{\frac{c + \sqrt{2}(y^2+z^2)}{x^2+y^2+2z^2}\Big\}^2 dx^2 + 
\Big \{\frac{c - \sqrt{2}(x^2+z^2)}{x^2+y^2+2z^2}\Big\}^2 dy^2 + 
\Big \{\frac{\sqrt{2}c - x^2 +y^2}{x^2+y^2+2z^2}\Big\}^2 dz^2,
\end{equation*}
where $c \in \mathbb{R}$. The principal curvatures of these systems, as well as Proposition \ref{assoc_cyclic}, then show that all coordinate surfaces are Dupin cyclides. 
\\\\Moreover, any Dupin cyclide is an isothermic surface, that is, it admits conformal curvature line coordinates. Hence, the associated systems of the 6-sphere coordinates consist of isothermic surfaces and  are therefore isothermic triply orthogonal systems in the sense of Darboux \cite{darboux_1878}. 

Taking into account the explicit classification of isothermic Guichard nets given in \cite[Chap.\,III]{iso_guichard, my_thesis}, it follows that the associated systems of the 6-sphere coordinates do not admit a reparametrization into a Guichard net.
\\\\By (\ref{dual_guichard_form}), the Lam\'e coefficients of the dual systems of the 6-sphere coordinates read
\begin{align*}
H_{1c}^\star&=\frac{(x^2+y^2+2z^2)^2-(c+\sqrt{2}(y^2+z^2))^2}{x^2+y^2+2z^2},
\\[4pt]H_{2c}^\star&=\frac{(x^2+y^2+2z^2)^2-(c-\sqrt{2}(x^2+z^2))^2}{x^2+y^2+2z^2},
\\[4pt]H_{3c}^\star&=\frac{-\sqrt{2} \left(x^2+y^2+2 z^2\right)^2-\sqrt{2}\left(c+\frac{1}{\sqrt{2}}(y^2-x^2)\right)^2}{x^2+y^2+2 z^2}
\end{align*} 
\ \\[4pt] and we obtain a 1-parameter family of new non-cyclic Guichard nets. Since the seed Guichard net $f$ is totally cyclic, the torsion
\begin{equation*}
\tau_i^\star= \frac{\kappa_{ji}^\star\partial_i\kappa_{ki}^\star-\kappa_{ki}^\star\partial_i\kappa_{ji}^\star}{(\kappa_{ji}^\star)^2+(\kappa_{ki}^\star)^2}
\end{equation*}
of any $x_i$-coordinate curve in the dual systems vanishes and therefore the dual Guichard nets consist of coordinate surfaces with planar curvature lines. 

However, the dual systems do not contain parallel or totally umbilic surfaces, which is also reflected in the fact that not all associated systems of the dual systems are cyclic (cf.\,Proposition \ref{assoc_cyclic}). Thus, the geometry of the associated systems of the original Guichard net and the associated systems of its dual systems change considerably.
\end{ex}
\begin{center}
\hspace*{-1cm}\includegraphics[scale=0.25]{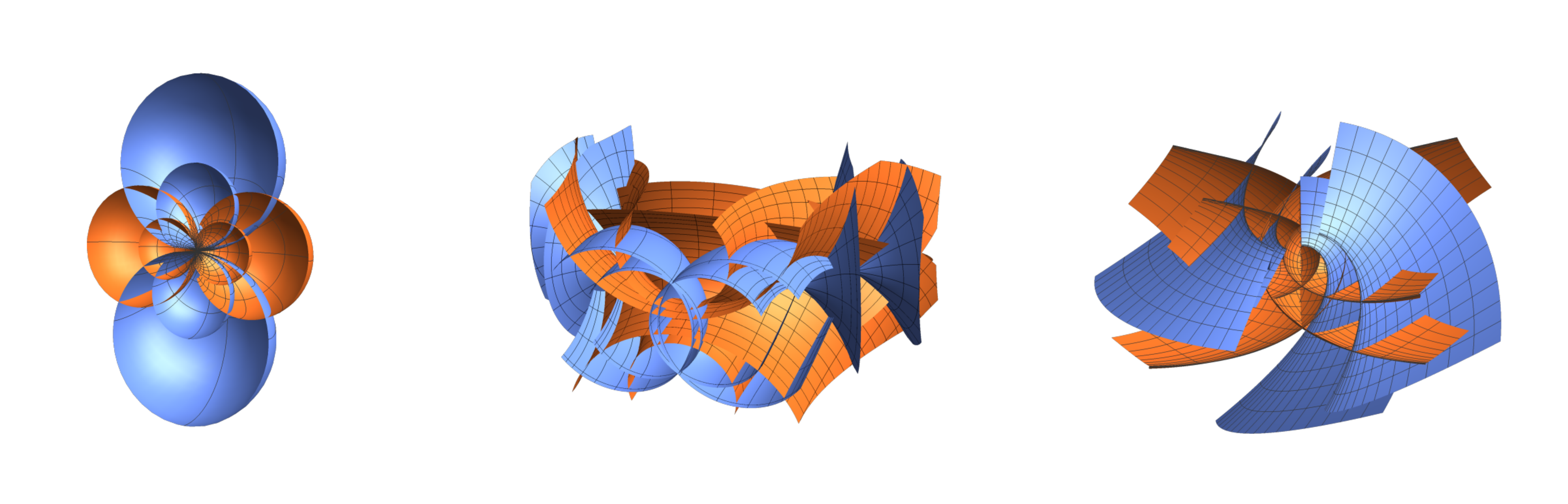}
\\[6pt]\small\emph{Figure 3.} The 6-sphere coordinates parametrized as Guichard net (\emph{left}) with an associated system (\emph{middle}) and the corresponding dual system (\emph{right}).
\end{center}
%
%
%
%
%
%
\section{B\"acklund-type transformations of Guichard nets}\label{section_trafo}
\noindent Classically \cite[\S 4]{bianchi_1918}, two triply orthogonal systems form a \emph{Ribaucour pair} if any two corresponding coordinate surfaces are Ribaucour transformations of each other; that is, two corresponding surfaces envelop a common sphere congruence so that curvature lines correspond. 

In this paper we are interested in Ribaucour transformations that preserve the Guichard condition of a Guichard net (cf.\,\cite{bianchi_1918, bianchi_1919, duality}). 
\\\\To begin with, we summarize some facts about the Ribaucour transformation of triply orthogonal systems in general, which will then be used for the particular case of Guichard nets and their associated systems. 

More details and results in this realm can be found in the two classical monographs \cite{bianchi_1918} and \cite{bianchi_1919}.

\subsection{Ribaucour transforms of triply orthogonal systems} 
Any Ribaucour transform $(f', H'_1, H'_2, H'_3)$ of a triply orthogonal system $(f, H_1, H_2, H_3)$ is uniquely described by four functions $\gamma_1, \gamma_2, \gamma_3$ and $\varphi$ satisfying 
\begin{equation}\label{rib_comp}
\partial_i \gamma_j = \beta_{ji} \gamma_i \ \ \text{and} \ \ \partial_i \varphi = H_i \gamma_i, 
\end{equation}
where the data of the Ribaucour transform reads  
\begin{align*}
f'&=f-\frac{2 \varphi}{A}(\gamma_1 N_1 + \gamma_2 N_2+\gamma_3 N_3),
\\H'_i&= H_i - \frac{2\varphi\theta_i}{A},
\\\beta'_{ij}&=\beta_{ij}-\frac{2 \gamma_i \theta_j}{A}
\end{align*}
and
\begin{align*}
A&:=\gamma_1^2+\gamma_2^2+\gamma_3^2,
\\\theta_i&:=\partial_i \gamma_i+\beta_{ji}\gamma_j+\beta_{ki}\gamma_k.
\end{align*}
Here, $N_i$ denotes the unit normals of the coordinate surfaces $x_i=const$ of $f$.
\\\\Note that, by Lam\'e's equations, for three given functions $\gamma_1, \gamma_2$ and $\gamma_3$ satisfying (\ref{rib_comp}) there exists a 1-parameter family of suitable functions $\varphi$, which define a 1-parameter family of Ribaucour transforms.
Since the radii of the enveloped Ribaucour sphere congruences for the coordinate surface family $x_i=const$ are given by
\begin{equation*}
R_i=-\frac{\varphi}{\gamma_i},
\end{equation*}
the 1-parameter family of functions $\varphi$ indeed determines different Ribaucour transformations. 
\\\\
As observed by Bianchi in \cite[\S 8]{bianchi_1918}, any Ribaucour transformation of a  triply orthogonal system can be decomposed into two Combescure transformations and an inversion in the unit sphere. Since this construction will turn out to be a crucial tool in the transformation of Guichard nets, we recall it here in detail.

Suppose $f'$ is a Ribaucour transformation of a triply orthogonal system $f$ described by the functions $\gamma_1$, $\gamma_2$, $\gamma_3$ and $\varphi$.
If we denote the unit normals of the coordinate surfaces $x_i=const$ of $f$ by $N_i$, then  the function
\begin{equation*}
\bar{f}:=\gamma_1 N_1 + \gamma_2 N_2 + \gamma_3 N_3
\end{equation*}
defines a Combescure transform $\bar{f}=:\mathcal{C}_1(f)$ of $f$ with induced metric
\begin{equation*}
\bar{I}=\theta_1^2 dx^2 + \theta_2^2 dy^2 + \theta_3^2 dz^2.
\end{equation*}
Moreover, the function $\bar{\varphi}:=\frac{1}{2}(A-1)$ fulfills condition (\ref{rib_comp}) with respect to the induced metric $\bar{I}$ and therefore the functions $(\gamma_1, \gamma_2, \gamma_3, \bar{\varphi})$ determine a Ribaucour transformation $\bar{f}'$ of $\bar{f}$:
\begin{equation*}
\bar{f}'=\bar{f}-\frac{2\bar{\varphi}}{A}\bar{f}=\frac{1}{A}\bar{f}=\frac{1}{|\bar{f}|^2}\bar{f}.
\end{equation*}
Thus, this Ribaucour transformation is an inversion $\iota$ in the unit sphere and the induced metric of $\bar{f}'=(\iota\circ\mathcal{C}_1)(f)$ is given by
\begin{equation*}
\bar{I}'=\frac{1}{A^2}\bar{I}.
\end{equation*}
Since the metric coefficients of $\bar{I}'$ fulfill
\begin{equation}\label{rib_trafo_inv}
\partial_j \frac{\theta_i}{A}=\{ \beta_{ji}-\frac{2\gamma_j\theta_i}{A} \}\frac{\theta_j}{A}=\mathcal{R}(\beta_{ji})\frac{\theta_j}{A},
\end{equation}
the transformed system $(\iota\circ\mathcal{C}_1)(f)$ is a Combescure transformation of $f'$ and we indeed gain the sought-after decomposition of the Ribaucour transformation into two Combescure transformations and an inversion.

The constructed systems $(\bar{f},\bar{f}')$ will be called the \emph{induced} Combescure transformations of the Ribauocur pair $(f,f')$. 

\bigskip

\noindent Conversely, a Combescure transformed system $\bar{f}$ of $f$ \emph{induces} a 1-parameter family of Ribaucour transforms of $f$: suppose that $\bar{f}$ is an arbitrary Combescure transform of $f$ and let $N_i$ denote the common unit normals of its coordinate surfaces $x_i=const$. Then the functions 
\begin{equation*}
\gamma_i:=\bar{f}\cdot N_i  
\end{equation*}
and a solution $\varphi$ of the integrable system
\begin{equation}\label{ref_phi}
\partial_i \varphi = H_i \gamma_i,
\end{equation}
where $H_i$ are the metric coefficients of $f$, define a Ribaucour transform $f'$ of $f$.  

Since the function $\varphi$ is, by the equations (\ref{ref_phi}), uniquely defined up to an additive constant, we obtain a 1-parameter family of induced Ribaucour transforms. 
\\\\Furthermore, since the Ribaucour transformed rotational coefficients $\beta_{ij}'$ only depend on the functions $\gamma_1, \gamma_2$ and $\gamma_3$, two Ribaucour transforms that are induced by the same system $\bar{f}$ share the same rotational coefficients. Hence, we obtain the following lemma (cf.\,Definition \ref{def_combescure}):
\begin{lem}\label{lemma_comb_ind}
If two Ribaucour transforms $f'_1$ and $f'_2$ are induced by the same triply orthogonal system, then $f'_1$ and $f'_2$ are related by a Combescure transformation.  
\end{lem}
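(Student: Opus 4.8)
The plan is to show that two Ribaucour transforms $f'_1$ and $f'_2$ of a triply orthogonal system $f$ that arise from the \emph{same} Combescure transform $\bar f$ (equivalently, from the same triple $(\gamma_1,\gamma_2,\gamma_3)$ solving \eqref{rib_comp}) differ only in the choice of the auxiliary function $\varphi$, and that this choice does not affect the rotational coefficients. By Definition \ref{def_combescure} and the remark following it, two triply orthogonal systems are related by a Combescure transformation if and only if they share the same rotational coefficients $\beta_{ij}$, so the lemma reduces to the identity $\beta'_{ij}(f'_1)=\beta'_{ij}(f'_2)$.

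First I would unwind the hypothesis: saying that $f'_1$ and $f'_2$ are induced by the same triply orthogonal system means, in the language of the preceding construction, that there is one Combescure transform $\bar f$ of $f$ with common unit normals $N_i$, hence one triple $\gamma_i=\bar f\cdot N_i$, and that $f'_1,f'_2$ correspond to two solutions $\varphi_1,\varphi_2$ of the integrable system $\partial_i\varphi=H_i\gamma_i$ in \eqref{ref_phi}. Since the right-hand side is the same, $\varphi_1-\varphi_2$ is constant; this is exactly the ``1-parameter family of induced Ribaucour transforms'' already described in the text.

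Next I would invoke the explicit formula for the transformed rotational coefficients recorded above, namely
\[
\beta'_{ij}=\beta_{ij}-\frac{2\gamma_i\theta_j}{A},\qquad A=\gamma_1^2+\gamma_2^2+\gamma_3^2,\qquad \theta_i=\partial_i\gamma_i+\beta_{ji}\gamma_j+\beta_{ki}\gamma_k.
\]
The key observation is that $A$ and the $\theta_j$ are built solely from $\gamma_1,\gamma_2,\gamma_3$ and the data $\beta_{ij}$ of $f$ — the function $\varphi$ never enters. Therefore $\beta'_{ij}$ depends only on the common triple $(\gamma_i)_i$, so $\beta'_{ij}(f'_1)=\beta'_{ij}(f'_2)$. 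By the characterization of Combescure-related triply orthogonal systems via equality of rotational coefficients, $f'_1$ and $f'_2$ are Combescure transforms of one another, which proves the lemma.

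I do not expect a genuine obstacle here: the statement is essentially a bookkeeping consequence of the formulas already displayed, and the only point requiring a word of care is to state clearly that ``induced by the same triply orthogonal system'' forces the $\gamma_i$ (not merely the metric $\bar I$) to coincide, so that the $\varphi$-ambiguity is the \emph{only} freedom — which is precisely what was set up in the paragraph introducing induced Ribaucour transforms.
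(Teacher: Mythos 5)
Your argument is correct and is essentially the paper's own: the paper justifies the lemma by the same observation that the transformed rotational coefficients $\beta'_{ij}=\beta_{ij}-2\gamma_i\theta_j/A$ depend only on $\gamma_1,\gamma_2,\gamma_3$ and not on $\varphi$, combined with the characterization of Combescure-related systems via equality of rotational coefficients. Your added remark that the $\gamma_i$ themselves (not just $\bar I$) are fixed by the hypothesis is a sensible clarification but introduces nothing new.
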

%
%
\subsection{B\"{a}cklund-type transformations of Guichard nets}
The purpose of this subsection is to geometrically understand the interplay between Ribaucour pairs of Guichard nets and the induced Combescure transforms in the decomposition discussed in the previous paragraphs.
\begin{prop}\label{induced_comb_guichard}
Let $(f, f')$ be a Ribaucour pair of Guichard nets, then the induced Combescure transforms $\bar{f}$ and $\bar{f}'$ are $\alpha^2 |\bar{f}|^2$- and $\alpha^2 |\bar{f}'|^2$-systems, $\alpha \in \mathbb{R}$, respectively. 
\end{prop}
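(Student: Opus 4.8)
The plan is to exploit the explicit decomposition of the Ribaucour transformation recalled just above: $\bar{f}=\mathcal{C}_1(f)$ with induced metric $\bar{I}=\sum_i \theta_i^2 dx_i^2$, and $\bar{f}'=(\iota\circ\mathcal{C}_1)(f)$ with induced metric $\bar{I}'=\frac{1}{A^2}\bar{I}$, where $A=|\bar f|^2=\gamma_1^2+\gamma_2^2+\gamma_3^2$. Since the class of $\alpha^2|\cdot|^2$-systems is invariant under inversion (as stated in the Preliminaries, with $H_i^2\mapsto H_i^2/|f|^4$ and $f\mapsto f/|f|^2$), it suffices to prove the claim for $\bar{f}$: once $\bar{f}$ is an $\alpha^2|\bar f|^2$-system, $\bar f'=\bar f/|\bar f|^2$ is automatically an $\alpha^2|\bar f'|^2$-system with the same constant $\alpha$. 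So the whole problem reduces to showing that $\theta_1^2+\theta_2^2-\theta_3^2=\alpha^2 A^2$, i.e. that $\bar f$ satisfies the trace condition $\bar H_1^2+\bar H_2^2-\bar H_3^2=\alpha^2|\bar f|^2$ for some constant $\alpha$, where $\bar H_i=\theta_i$.

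First I would record what it means for $f'$ to be a Guichard net in terms of the transformation data. Write the Guichard condition $\varepsilon_i H_i^2$ summed to zero with $(\varepsilon_1,\varepsilon_2,\varepsilon_3)=(1,1,-1)$; applying the formula $H_i'=H_i-\frac{2\varphi\theta_i}{A}$ and expanding $\sum_i\varepsilon_i(H_i')^2=0$ while using $\sum_i\varepsilon_i H_i^2=0$ for $f$ itself, one obtains
\begin{equation*}
-\frac{4\varphi}{A}\sum_i\varepsilon_i H_i\theta_i+\frac{4\varphi^2}{A^2}\sum_i\varepsilon_i\theta_i^2=0,
\end{equation*}
hence (away from the degenerate locus $\varphi\equiv 0$)
\begin{equation*}
\varphi\sum_i\varepsilon_i\theta_i^2=A\sum_i\varepsilon_i H_i\theta_i.
\end{equation*}
The next step is to differentiate this relation and feed in the structural identities: $\partial_i\varphi=H_i\gamma_i$, $\partial_i\gamma_j=\beta_{ji}\gamma_i$, the definition $\theta_i=\partial_i\gamma_i+\beta_{ji}\gamma_j+\beta_{ki}\gamma_k$, Lamé's first system for the $H_i$, and the differentiated Guichard relation $\varepsilon_i\partial_i H_i+\varepsilon_j H_j\beta_{ij}+\varepsilon_k H_k\beta_{ik}=0$ from equation (\ref{diff_guichard_cond}). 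The goal of this computation is to show that the function $\Phi:=\sum_i\varepsilon_i\theta_i^2 - (\text{const})\cdot A^2$ can be arranged to vanish; equivalently, to show that $\frac{\sum_i\varepsilon_i\theta_i^2}{A^2}$ is constant on $U$ by checking that all three partial derivatives vanish. A parallel (and likely cleaner) route: use the fact, already derived above, that $\bar f$ is a triply orthogonal system whose metric coefficients $\theta_i$ satisfy, via (\ref{rib_trafo_inv}) and the Ribaucour formula for $\beta_{ij}'$, controlled transformation rules; combined with $\partial_i\varphi=H_i\gamma_i$ and the analogous statement $\bar\varphi=\tfrac12(A-1)$ satisfying (\ref{rib_comp}) with respect to $\bar I$, one can treat $f$ and $f'$ symmetrically and deduce the trace identity for $\bar I$ from the two Guichard conditions on $f$ and $f'$ simultaneously.

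The main obstacle I anticipate is the bookkeeping in the differentiation step: one must handle both the ``mixed'' derivatives $\partial_j\theta_i$ (which bring in $\beta$'s and a second-order term) and the ``pure'' derivatives $\partial_i\theta_i$, and cancel everything using precisely the two first-order consequences of the Guichard conditions on $f$ and on $f'$. It is here that the specific signature $(1,1,-1)$ and the identities $\varepsilon_i\varepsilon_j=-\varepsilon_k$ will be essential — the computation should fail for a generic $\chi$-system and succeed only because $\chi=0$ on both sides. Once the constant $\alpha$ is identified (it will be expressed through the Ribaucour data, e.g. via the value of $\sum_i\varepsilon_i H_i\theta_i$ relative to $\varphi$ and $A$ at a point, and shown to be independent of the point), the proof concludes: $\bar f$ is an $\alpha^2|\bar f|^2$-system, and inversion-invariance of this class finishes the statement for $\bar f'$.
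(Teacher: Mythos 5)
Your route is genuinely different from the paper's. The paper does not compute anything from scratch: it invokes Bianchi's classification (\cite[\S 11]{bianchi_1919}) of the data $(\gamma_i,\varphi)$ generating a Ribaucour pair of Guichard nets, in whose normal form $\theta_i=\alpha\bar{\gamma}_i$ with $A=\bar{A}$, so that $\theta_1^2+\theta_2^2-\theta_3^2=\alpha^2\bar{A}=\alpha^2 A=\alpha^2|\bar{f}|^2$ is immediate; the inversion-invariance argument for $\bar{f}'$ is then the same as yours. You instead propose to extract the trace identity directly from the two Guichard conditions, and this does work: your first identity $\varphi\Theta=A\Sigma$, with $\Theta:=\sum_i\varepsilon_i\theta_i^2$ and $\Sigma:=\sum_i\varepsilon_iH_i\theta_i$, is correct, and the differentiation step you defer closes more cleanly than you fear. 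Setting $T_j:=\varepsilon_j\partial_j\theta_j+\sum_{i\neq j}\varepsilon_i\beta_{ji}\theta_i$, the Combescure relations $\partial_j\theta_i=\beta_{ji}\theta_j$ $(i\neq j)$ and the differentiated Guichard condition give $\partial_j\Theta=2\theta_jT_j$ and $\partial_j\Sigma=H_jT_j$, while $\partial_jA=2\gamma_j\theta_j$ and $\partial_j\varphi=H_j\gamma_j$; differentiating $\varphi\Theta=A\Sigma$ and substituting $\Theta=A\Sigma/\varphi$ yields
\begin{equation*}
\bigl(2\varphi\theta_j-AH_j\bigr)\Bigl(T_j-\tfrac{\Sigma}{\varphi}\gamma_j\Bigr)=0,
\end{equation*}
and since $2\varphi\theta_j-AH_j=-AH_j'\neq 0$ one gets $T_j=\tfrac{\Sigma}{\varphi}\gamma_j$, hence $\partial_j(\Sigma/\varphi)=0$ and $\Theta=(\Sigma/\varphi)\,A$ with $\Sigma/\varphi$ constant. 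What your approach buys is a self-contained proof independent of Bianchi's structural theorem; what it loses is the explicit conjugate data $(\bar{\gamma}_i)$ of the normal form.

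Two things need repair before this counts as a proof. First, a recurring slip: since $|\bar{f}|^2=\gamma_1^2+\gamma_2^2+\gamma_3^2=A$ (not $A^2$), the target identity is $\Theta=\alpha^2A$, i.e.\ $\Theta/A$ constant; your stated goals ``$\theta_1^2+\theta_2^2-\theta_3^2=\alpha^2A^2$'' and ``$\Theta/A^2$ constant'' are wrong as written and are not what the computation produces. Second, the decisive computation is only announced, not carried out, and when you do carry it out you should record the hypotheses it uses ($\varphi\not\equiv 0$, i.e.\ the Ribaucour pair is non-trivial, and $H_j'\neq 0$, which holds because $f'$ is a genuine triply orthogonal system) and say a word about why the constant $\Sigma/\varphi$ is non-negative, so that it may be written as $\alpha^2$ with $\alpha\in\mathbb{R}$ — a point that Bianchi's normal form supplies for free but your self-contained version must address.
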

\begin{proof}
By \cite[\S 11]{bianchi_1919}, we know that a Ribaucour pair $(f,f')$ of Guichard nets is generated by the data $(\gamma_1,\gamma_2,\gamma_3,\varphi)$ fulfilling
\begin{align}
\partial_j \gamma_i&= \beta_{ij}\gamma_j, \ \ \ \theta_i=\alpha \bar{\gamma}_i
\\\partial_j \bar{\gamma}_i&= \beta_{ji}\bar{\gamma}_j, \ \ \ \bar{\theta}_i=\alpha \gamma_i,
\\A:=\gamma_1^2+\gamma_2^2+\gamma_3^2&=\bar{\gamma}_1^2+\bar{\gamma}_2^2-\bar{\gamma}_3^2=:\bar{A},
\end{align} 
where $\bar{\theta}_i:=\varepsilon_i\partial_i\bar{\gamma}_i+\varepsilon_j \beta_{ij} \bar{\gamma}_j +\varepsilon_k \beta_{ik} \bar{\gamma}_k$ and
\begin{equation*}
\varphi:=\frac{1}{\alpha}(\varepsilon_i H_i \bar{\gamma}_i+\varepsilon_j H_j \bar{\gamma}_j+\varepsilon_k H_k \bar{\gamma}_k).
\end{equation*}
Therefore, the Lam\'e coefficients $(\theta_1, \theta_2, \theta_3)$ of the induced Combescure transform 
\begin{equation*}
\bar{f}=\gamma_1N_1+\gamma_2 N_2+\gamma_3N_3
\end{equation*}
satisfy the condition 
\begin{equation*}
\theta_1^2+\theta_2^2-\theta_3^2=\alpha^2\bar{A}=\alpha^2 A=\alpha^2|\bar{f}|^2.
\end{equation*}
Moreover, taking into account the change of the induced metric after an inversion in the unit sphere, completes the proof.
\end{proof} 
%
%
Conversely, we can utilize this fact to construct Ribaucour pairs of Guichard nets and obtain in this way a B\"{a}cklund-type transformation for Guichard nets:
\begin{thm}\label{thm_trafo_guichard}
Let $(f, H_1, H_2, H_3)$ be a Guichard net and $(\bar{f}, \bar{H}_1,\bar{H}_2,\bar{H}_3)$ a Combescure transformed $\alpha^2 |\bar{f}|^2$-system of $f$. Then the family of Ribaucour transformations of $f$ induced by $\bar{f}$ contains exactly one Guichard net $\mathcal{R}(f)$, which is given by the data
\begin{equation}
\begin{aligned}\label{data_rib_guichard}
\mathcal{R}(f)&=f- \frac{2\varphi}{|\bar{f}|^2}\bar{f},
\\\mathcal{R}(H_i)&= H_i - \frac{2\varphi}{|\bar{f}|^2}\bar{H}_i,
\end{aligned}
\end{equation}
where $\varphi:=\frac{1}{\alpha^2}\{ H_1 \bar{H}_1+ H_2 \bar{H}_2- H_3 \bar{H}_3\}$.

This Ribaucour transformation is said to be a \emph{B\"{a}cklund-type transformation of $f$ with respect to the system $\bar{f}$}.
\end{thm}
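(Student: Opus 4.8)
The plan is to verify directly that the data $(\mathcal{R}(f), \mathcal{R}(H_i))$ in \eqref{data_rib_guichard} genuinely defines a Ribaucour transform of $f$ (in the form recalled at the start of Section~\ref{section_trafo}) and that the resulting system satisfies the Guichard condition $\mathcal{R}(H_1)^2 + \mathcal{R}(H_2)^2 - \mathcal{R}(H_3)^2 = 0$. First I would recall the general recipe: a Combescure transform $\bar{f}$ of $f$ induces a $1$-parameter family of Ribaucour transforms via $\gamma_i := \bar{f}\cdot N_i$ together with a solution $\varphi$ of $\partial_i \varphi = H_i \gamma_i$, where the freedom in $\varphi$ is a single additive constant. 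Since $\bar{f} = \gamma_1 N_1 + \gamma_2 N_2 + \gamma_3 N_3$ we have $|\bar{f}|^2 = \gamma_1^2 + \gamma_2^2 + \gamma_3^2 = A$, so the ansatz amounts to pinning down the distinguished member of the family, namely the one with $\varphi = \frac{1}{\alpha^2}\{H_1\bar{H}_1 + H_2\bar{H}_2 - H_3\bar{H}_3\}$.

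The first substantive step is to check that this particular $\varphi$ is admissible, i.e.\ that it actually solves $\partial_i\varphi = H_i\gamma_i$. Here is where the hypothesis that $\bar{f}$ is an $\alpha^2|\bar{f}|^2$-system enters decisively. Writing $\bar H_i = h_i H_i$ for the Combescure functions and using $\gamma_i = \bar{f}\cdot N_i$, one computes $\partial_i\varphi$ by differentiating $\frac{1}{\alpha^2}\sum_k \varepsilon_k H_k \bar H_k$ and applying the point equations \eqref{point_TOS}, the Gauss equation, and the relation \eqref{diff_guichard_cond} coming from the Guichard condition of $f$; the $\alpha^2|\bar f|^2$-condition $\bar\theta_1^2+\bar\theta_2^2-\bar\theta_3^2 = \alpha^2|\bar f|^2$ (equivalently the second-system/Lamé constraint on $\bar H_i$) is exactly what collapses the resulting expression to $H_i\gamma_i$. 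This is essentially the converse bookkeeping to the computation in Proposition~\ref{induced_comb_guichard}, read backwards: there, the data of a Ribaucour pair of Guichard nets was shown to force the $\alpha^2$-condition on the induced Combescure transforms; here we run the same identities in the other direction to single out the Guichard member.

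Once $\varphi$ is validated, the Ribaucour data $(\gamma_1,\gamma_2,\gamma_3,\varphi)$ produces, by the general formulas recalled earlier, a triply orthogonal system with $\mathcal{R}(H_i) = H_i - \frac{2\varphi}{A}\bar H_i$ where $\bar H_i = \theta_i$ in the notation there — but since $\bar f$ is already a Combescure transform, $\theta_i = \bar H_i$, matching \eqref{data_rib_guichard}. The final step is to verify the Guichard condition for $\mathcal{R}(f)$: expand
\[
\sum_k \varepsilon_k \mathcal{R}(H_k)^2 = \sum_k \varepsilon_k H_k^2 - \frac{4\varphi}{A}\sum_k \varepsilon_k H_k \bar H_k + \frac{4\varphi^2}{A^2}\sum_k \varepsilon_k \bar H_k^2.
\]
The first sum vanishes because $f$ is a Guichard net. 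The middle sum is $\alpha^2\varphi$ by the very definition of $\varphi$, contributing $-4\varphi^2\alpha^2/A$. For the last sum, $\sum_k \varepsilon_k \bar H_k^2 = \sum_k \varepsilon_k h_k^2 H_k^2$ must equal $\alpha^2 A$; this is the Lamé-coefficient form of the $\alpha^2|\bar f|^2$-condition on $\bar f$ (it is the statement $\bar H_1^2 + \bar H_2^2 - \bar H_3^2$ being the trace of $\bar I$, which for an $\alpha^2|\bar f|^2$-system equals $\alpha^2|\bar f|^2 = \alpha^2 A$). Hence the last term contributes $+4\varphi^2\alpha^2/A$, the two nonzero contributions cancel, and $\mathcal{R}(f)$ is a Guichard net. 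Uniqueness within the induced family is clear: any other choice shifts $\varphi$ by a nonzero constant, which destroys either the $\partial_i\varphi = H_i\gamma_i$ normalization consistent with the Guichard trace or, more transparently, breaks the cancellation above since the middle and last terms scale differently in $\varphi$.

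**Main obstacle.** The genuinely delicate point is the first substantive step — confirming that the prescribed $\varphi = \frac{1}{\alpha^2}\sum_k \varepsilon_k H_k\bar H_k$ satisfies $\partial_i\varphi = H_i\gamma_i$ with $\gamma_i = \bar f\cdot N_i$. This requires correctly assembling the derivatives of the $\bar H_k$ via Lamé's first system, the Gauss/point equations for both $f$ and $\bar f$ (they share rotational coefficients), the differentiated Guichard relation \eqref{diff_guichard_cond}, and the $\alpha^2$-system constraint, and seeing that everything telescopes. The identity $\sum_k\varepsilon_k\bar H_k^2 = \alpha^2 A$ that powers the final cancellation is the same input, so once the first step is done carefully the rest is routine algebra.
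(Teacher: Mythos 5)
Your proposal is correct and follows essentially the same route as the paper: identify the induced Ribaucour family via $\gamma_i=\bar f\cdot N_i$ and $\varphi_\lambda=\frac{1}{\alpha^2}\sum_k\varepsilon_kH_k\bar H_k+\lambda$, then compute $\sum_k\varepsilon_k\mathcal{R}(H_k)^2=\tfrac{4\alpha^2\lambda}{|\bar f|^2}\varphi_\lambda$ so that only $\lambda=0$ yields a Guichard net. The step you flag as the main obstacle, $\partial_i\varphi=H_i\gamma_i$, is also only asserted in the paper; it follows by combining the differentiated Guichard condition \eqref{diff_guichard_cond} with the derivative of the $\alpha^2|\bar f|^2$-condition, which gives $\varepsilon_i\partial_i\bar H_i+\varepsilon_j\beta_{ij}\bar H_j+\varepsilon_k\beta_{ik}\bar H_k=\alpha^2\gamma_i$.
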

\begin{proof}
Let $\bar{f}$ be a Combescure related $\alpha^2 |\bar{f}|^2$-system of the Guichard net $f$. Then, by using the differentiated Guichard condition, we conclude that the 1-parameter family of induced Ribaucour transformations of $f$ are defined by the functions $\gamma_i:=\bar{f} \cdot N_i$ and 
\begin{equation*}
\varphi_\lambda:=\frac{1}{\alpha^2}\{ H_1\bar{H}_1 + H_2\bar{H}_2 - H_3\bar{H}_3 \}+\lambda, \ \ \lambda \in \mathbb{R},
\end{equation*}
which satisfy the conditions (\ref{rib_comp}). Therefore, these Ribaucour transforms are described by 
\begin{align}\label{rib_guichard}
\mathcal{R}(f)&= f - \frac{2\varphi_\lambda}{A}(\gamma_1N_1 + \gamma_2 N_2 + \gamma_3 N_3)=f-\frac{2\varphi_\lambda}{|\bar{f}|^2}\bar{f}
\\\mathcal{R}(H_i)&= H_i - \frac{2 \varphi_\lambda \theta_i}{A} = H_i - \frac{2 \varphi_\lambda}{|\bar{f}|^2}\bar{H}_i.
\end{align}
Moreover, the Lam\'e coefficients satisfy
\begin{align*}
\mathcal{R}(H_1)^2+&\mathcal{R}(H_2)^2-\mathcal{R}(H_3)^2= 
\\[4pt]&\frac{4 \varphi^2}{|\bar{f}|^2}(\bar{H}_1^2+\bar{H}_2^2-\bar{H}_3^2) - \frac{4 \varphi}{|\bar{f}|^2} (H_1\bar{H}_1 + H_2\bar{H}_2 - H_3\bar{H}_3)=\frac{4\alpha^2\lambda}{|\bar{f}|^2}\varphi.
\end{align*}
Thus, an induced Ribaucour transform determined by (\ref{rib_guichard}) is again a Guichard net if and only if $\lambda=0$. 
\end{proof}
By Proposition \ref{induced_comb_guichard} and Theorem~ \ref{thm_trafo_guichard}, we have reduced the B\"{a}cklund-type transformation for Guichard nets to the existence of Combescure transformed $\alpha^2|\bar{f}|^2$-systems of the Guichard nets. Thus, after an $\alpha^2|\bar{f}|^2$-system is determined, the B\"acklund-type transformation of any Combescure related Guichard net is obtained by simple computations from ($\ref{data_rib_guichard}$).
\\\\Analogous proofs as given for Proposition \ref{induced_comb_guichard} and Theorem~\ref{thm_trafo_guichard} lead to a B\"acklund-type transformation for the class of $\lambda$-systems, where $\lambda \in \mathbb{R}$ is a constant:
\begin{cor}\label{cor_const_backlund}
Let $(f, H_1, H_2, H_3)$ be a $\lambda$-system, $\lambda \in \mathbb{R}$, then any Ribaucour transformed $\lambda$-system $\mathcal{R}(f)$ of $f$ is induced by a Combescure related $\alpha^2|\bar{f}|^2$-system $(\bar{f},\bar{H}_1,\bar{H}_2,\bar{H}_3)$ and determined by
\begin{equation}
\begin{aligned}\label{data_rib_const}
\mathcal{R}(f)&=f- \frac{2\varphi}{|\bar{f}|^2}\bar{f},
\\\mathcal{R}(H_i)&= H_i - \frac{2\varphi}{|\bar{f}|^2}\bar{H}_i,
\end{aligned}
\end{equation}
where $\varphi:=\frac{1}{\alpha^2}\{ H_1 \bar{H}_1+ H_2 \bar{H}_2- H_3 \bar{H}_3\}$.
\end{cor}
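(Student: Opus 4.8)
The plan is to transcribe the proofs of Proposition~\ref{induced_comb_guichard} and Theorem~\ref{thm_trafo_guichard} to the constant-trace setting, using that differentiating $H_1^2+H_2^2-H_3^2=\lambda$ with $\lambda$ constant yields exactly the relation (\ref{diff_guichard_cond}) of the Guichard case (a constant differentiates to zero), so every structural identity used there remains available, with $(\varepsilon_1,\varepsilon_2,\varepsilon_3)=(1,1,-1)$ as before. Concretely, I would start from a Ribaucour pair $(f,f')$ with $f'=\mathcal{R}(f)$ both $\lambda$-systems, write its Ribaucour data as $(\gamma_1,\gamma_2,\gamma_3,\varphi)$, and pass to the decomposition of a Ribaucour transformation recalled above: the function $\bar f:=\gamma_1N_1+\gamma_2N_2+\gamma_3N_3$ is a Combescure transform $\mathcal{C}_1(f)$ of $f$ with Lam\'e coefficients $\bar H_i$ and $A:=\gamma_1^2+\gamma_2^2+\gamma_3^2=|\bar f|^2$, and since $\gamma_i=\bar f\cdot N_i$, the system $\mathcal{R}(f)$ is precisely the Ribaucour transform of $f$ induced by $\bar f$ via the function $\varphi$, that is, $\mathcal{R}(f)=f-2\varphi\bar f/|\bar f|^2$ and $\mathcal{R}(H_i)=H_i-2\varphi\bar H_i/|\bar f|^2$ with $\partial_i\varphi=H_i\gamma_i$. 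It then remains to prove (a) that $\bar f$ is an $\alpha^2|\bar f|^2$-system for a constant $\alpha$, and (b) that $\varphi=\tfrac1{\alpha^2}\{H_1\bar H_1+H_2\bar H_2-H_3\bar H_3\}$.

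For (a) I would follow Proposition~\ref{induced_comb_guichard}: imposing the constant-trace condition on both $f$ and $f'$ forces the Ribaucour data to satisfy the Bianchi-type generating system, i.e.\ the $\lambda$-system analogue of \cite[\S 11]{bianchi_1919} --- its derivation uses only the once-differentiated trace relation and so carries over unchanged --- giving $\bar H_i=\alpha\bar\gamma_i$ with $\partial_j\bar\gamma_i=\beta_{ji}\bar\gamma_j$ and $A=\bar\gamma_1^2+\bar\gamma_2^2-\bar\gamma_3^2$; hence $\bar H_1^2+\bar H_2^2-\bar H_3^2=\alpha^2A=\alpha^2|\bar f|^2$. (Alternatively one can extract $\alpha$ directly from the Ribaucour formula for $\mathcal{R}(H_i)$ and the $\lambda$-condition, without quoting Bianchi.) Then, as in Proposition~\ref{induced_comb_guichard}, accounting for the change of the induced metric under the inversion $\iota$ in the unit sphere shows that $\bar f':=(\iota\circ\mathcal{C}_1)(f)$ is an $\alpha^2|\bar f'|^2$-system as well.

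For (b), following Theorem~\ref{thm_trafo_guichard} I would check directly that $\varphi_0:=\tfrac1{\alpha^2}\sum_i\varepsilon_iH_i\bar H_i$ solves $\partial_i\varphi_0=H_i\gamma_i$: differentiating $\sum_i\varepsilon_iH_i\bar H_i$ and substituting, for $j\neq i$, $\partial_iH_j=H_i\beta_{ij}$ and $\partial_i\bar H_j=\bar H_i\beta_{ij}$, together with the differentiated $\lambda$-condition $\varepsilon_i\partial_iH_i=-\varepsilon_jH_j\beta_{ij}-\varepsilon_kH_k\beta_{ik}$ for $f$ and the differentiated condition $\varepsilon_i\partial_i\bar H_i=\alpha^2\gamma_i-\varepsilon_j\bar H_j\beta_{ij}-\varepsilon_k\bar H_k\beta_{ik}$ for $\bar f$ (the latter from $\partial_i|\bar f|^2=2\bar H_i\gamma_i$), the $\beta$-terms cancel in pairs and only $\alpha^2H_i\gamma_i$ survives. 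Hence $\varphi=\varphi_0+c$ for a constant $c$; inserting this into $\mathcal{R}(H_i)=H_i-2\varphi\bar H_i/|\bar f|^2$ and using $\sum_i\varepsilon_iH_i\bar H_i=\alpha^2\varphi_0$ and $\sum_i\varepsilon_i\bar H_i^2=\alpha^2|\bar f|^2$ gives $\sum_i\varepsilon_i\mathcal{R}(H_i)^2=\lambda+4\alpha^2c\,\varphi/|\bar f|^2$, so $\mathcal{R}(f)$ is again a $\lambda$-system only when $c=0$ (the transform being non-trivial, $\varphi\not\equiv0$). Thus $\varphi=\varphi_0$, which is exactly the data (\ref{data_rib_const}).

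The one genuinely new point is (a): establishing the Bianchi-type generating data --- equivalently, the constancy of $\alpha$ --- for Ribaucour pairs of constant-$\chi$ systems, since the classical references only treat Guichard nets. As indicated this is not a serious obstacle, because the relevant identities involve only the once-differentiated trace relation, which is identical for every constant $\lambda$ and for $\lambda=0$; the rest is a routine transcription of the Guichard computations.
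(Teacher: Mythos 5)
Your proposal is correct and follows exactly the route the paper intends: the paper gives no separate proof of this corollary, stating only that it follows by arguments analogous to Proposition~\ref{induced_comb_guichard} and Theorem~\ref{thm_trafo_guichard}, and your transcription — including the verification that $\partial_i\varphi_0=H_i\gamma_i$ and the computation $\sum_i\varepsilon_i\mathcal{R}(H_i)^2=\lambda+4\alpha^2c\,\varphi/|\bar f|^2$ forcing $c=0$ — is precisely that argument carried out. You also correctly isolate the only point needing care, namely that the Bianchi-type generating data for Ribaucour pairs of constant-trace systems depends only on the once-differentiated trace relation (the constant $\lambda$ cancels in $\sum_i\varepsilon_i(H_i'^2-H_i^2)=0$), so the Guichard-case derivation carries over verbatim.
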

Based on these observations, we prove a permutability theorem for the associated and dual systems of a Guichard net and the B\"acklund-type transformation:
\begin{thm}
Let $f$ be a Guichard net with associated systems $\hat{f}_c$ and dual systems $f^\star_c$. Then the B\"acklund-type transforms $\mathcal{R}(\hat{f}_c)$ and $\mathcal{R}(f_c^\star)$ induced by an $\alpha^2|\bar{f}|^2$-system provide associated and dual systems of the Ribaucour transformed Guichard net $\mathcal{R}(f)$ induced by the same $\alpha^2|\bar{f}|^2$-system:
\begin{equation*}
\mathcal{R}(\hat{f}_c)=\widehat{\mathcal{R}(f)}_c  \ \ \text{ and } \ \ \mathcal{R}(f_c^\star)= \mathcal{R}(f)_c^\star.
\end{equation*}
\end{thm}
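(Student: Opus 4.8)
The plan is to verify the two claimed identities by working directly with the explicit Combescure data. Recall from the excerpt that the associated systems $\hat{f}_c$ of $f$ are given by $d\hat f_c = (h_1+c)\,\partial_x f\,dx + (h_2+c)\,\partial_y f\,dy + (h_3+c)\,\partial_z f\,dz$ with $h_i$ solving system (\ref{system_associated}), and the dual systems by (\ref{dual_guichard_form}), namely $h_{ic}^\star = -(h_i+c)^2 + \varepsilon_i/H_i^2$. The B\"acklund-type transform $\mathcal R$ with respect to a fixed Combescure $\alpha^2|\bar f|^2$-system $\bar f$ of $f$ is, by Theorem~\ref{thm_trafo_guichard} and Corollary~\ref{cor_const_backlund}, applied to \emph{any} Combescure relative of $f$ (the associated systems $\hat f_c$ are $1$-systems by Proposition~\ref{prop_associated_tos}, and the dual systems $f^\star_c$ are $0$-systems by the preceding Proposition) via formula (\ref{data_rib_const}): $\mathcal R(g) = g - \tfrac{2\varphi_g}{|\bar f|^2}\bar f$ with $\varphi_g = \tfrac{1}{\alpha^2}\{G_1\bar H_1 + G_2\bar H_2 - G_3\bar H_3\}$, where $G_i$ are the Lam\'e coefficients of $g$. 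Here the key point is that $\bar f$ is Combescure related to \emph{all} of $f$, $\hat f_c$, $f^\star_c$ simultaneously: since these all share the rotational coefficients $\beta_{ij}$ of $f$, the same $\bar f = \gamma_1 N_1 + \gamma_2 N_2 + \gamma_3 N_3$ (with $\gamma_i = \bar f\cdot N_i$) induces Ribaucour transforms of each of them, and the induced $\gamma_i$ are literally the same functions in every case.

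First I would compute $\widehat{\mathcal R(f)}_c$. By Proposition~\ref{prop_associated_tos} applied to the Guichard net $\mathcal R(f)$, its associated systems are obtained by solving system (\ref{system_associated}) with $H_i$ replaced by $\mathcal R(H_i)$. Rather than solving this anew, I would instead verify that $\mathcal R(\hat f_c)$ satisfies the defining relations of an associated system of $\mathcal R(f)$ — concretely, that $\mathcal R(\hat f_c)$ is Combescure related to $\mathcal R(f)$ with ratios $\mathcal R(\hat H_{ic})/\mathcal R(H_i) = H_i^{-1}\hat H_{ic}\cdot(\text{something independent of }i)$, or more efficiently, that the characterization in Theorem~\ref{thm_char_guichard} holds, i.e. $\mathcal R(H_i)\,\mathcal R(\hat H_{jc}) - \mathcal R(H_j)\,\mathcal R(\hat H_{ic}) = \varepsilon_k\,\mathcal R(H_k)$. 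Expanding the left side using $\mathcal R(H_i) = H_i - \tfrac{2\varphi}{|\bar f|^2}\bar H_i$ and $\mathcal R(\hat H_{ic}) = \hat H_{ic} - \tfrac{2\hat\varphi_c}{|\bar f|^2}\bar H_i$ (with $\hat\varphi_c$ the function $\varphi$ from (\ref{data_rib_const}) built from the Lam\'e coefficients of $\hat f_c$), the cross terms proportional to $\bar H_i\bar H_j - \bar H_j\bar H_i$ cancel, and one is left with $H_i\hat H_{jc} - H_j\hat H_{ic}$ plus correction terms proportional to $\bar H_k$; using $H_i\hat H_{jc} - H_j\hat H_{ic} = \varepsilon_k H_k$ (Theorem~\ref{thm_char_guichard} for $f$) and $\bar H_1\bar H_1\cdots$ — more precisely the relation $\varepsilon_1\bar H_1^2 + \varepsilon_2\bar H_2^2 + \varepsilon_3\bar H_3^2$ expressed through $\hat\varphi_c$ and $\varphi$ — these should combine to exactly $\varepsilon_k\mathcal R(H_k)$. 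The bookkeeping here is the analogue of the final computation in the proof that dual systems of a Guichard net are Guichard nets. For the second identity, $\mathcal R(f^\star_c) = \mathcal R(f)^\star_c$, I would similarly use (\ref{dual_guichard_form}) applied to $\mathcal R(f)$: its dual systems have Lam\'e coefficients built from $-(h_i^{\mathcal R}+c)^2 + \varepsilon_i/\mathcal R(H_i)^2$ where $h_i^{\mathcal R}$ are the functions defining $\widehat{\mathcal R(f)}_c$; once the first identity is established, $h_i^{\mathcal R} = \mathcal R(\hat H_{ic})/\mathcal R(H_i)$ evaluated at $c=0$, and one checks that the resulting dual Lam\'e coefficients coincide with $\mathcal R(H_{ic}^\star) = H_{ic}^\star - \tfrac{2\varphi^\star_c}{|\bar f|^2}\bar H_i$.

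The main obstacle will be the bookkeeping of the various $\varphi$-type functions: there is one $\varphi$ for $f$, one $\hat\varphi_c$ for $\hat f_c$, one $\varphi^\star_c$ for $f^\star_c$, and one must show the correct linear/quadratic relations among them — essentially $\hat\varphi_c = \tfrac{1}{\alpha^2}\{\hat H_{1c}\bar H_1 + \hat H_{2c}\bar H_2 - \hat H_{3c}\bar H_3\}$ must be related to $\varphi$ by the same affine shift $c$ (up to a $|\bar f|^2$ term) that relates $\hat f_c$ to $f$, and $\varphi^\star_c$ must be the corresponding quadratic combination. I expect that $\hat H_{ic} = (h_i+c)H_i$ together with the Combescure relation $\bar H_i = h_i^{\bar f}H_i$ (for the data $h_i^{\bar f}$ defining $\bar f$) makes $\hat\varphi_c$ expand cleanly, because $\sum_i \varepsilon_i \hat H_{ic}\bar H_i = \sum_i \varepsilon_i h_i^{\bar f} H_i^2 (h_i + c)$, and the $c$-linear part is $c\sum_i\varepsilon_i h_i^{\bar f}H_i^2$, which by the differentiated Guichard/$\alpha^2|\bar f|^2$ conditions relates back to $|\bar f|^2 = A$. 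Once this relation among the scalar potentials is pinned down, both permutability identities reduce to the same cancellation of $\bar H_i\bar H_j$ cross terms that appeared in the dual-Guichard proof, so the substance of the argument is that one bookkeeping lemma about the $\varphi$'s. I would isolate that as a preliminary computation and then the two displayed equalities follow by direct substitution into (\ref{data_rib_guichard})–(\ref{data_rib_const}), (\ref{equ_hat})–(\ref{system_associated}) and (\ref{dual_guichard_form}).
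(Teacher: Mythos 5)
Your strategy coincides in substance with the paper's: both arguments note that the three Ribaucour transforms, being induced by the same $\alpha^2|\bar f|^2$-system, remain Combescure related to one another (Lemma \ref{lemma_comb_ind}), and both then reduce the permutability to checking that a characterizing bilinear identity among Lam\'e coefficients survives the affine substitution $G_i \mapsto G_i - \tfrac{2\varphi_G}{|\bar f|^2}\bar H_i$, which in turn rests on algebraic relations among the scalar potentials $\varphi$, $\hat\varphi$, $\varphi^\star$. The only genuine difference is which identity gets checked: the paper verifies the single symmetric G-system relation $\mathcal R(H_i)\mathcal R(H^\star_j)+\mathcal R(H_j)\mathcal R(H^\star_i)=-2\mathcal R(\hat H_i)\mathcal R(\hat H_j)$, which certifies the associated and the dual claim in one stroke, whereas you split the work, verifying the antisymmetric relation of Theorem \ref{thm_char_guichard} for the associated part and then comparing against (\ref{dual_guichard_form}) for the dual part. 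The paper's choice is slightly more economical, and its ``bookkeeping lemma'' is exactly the pair of relations (\ref{phi_relations}): the linear terms are handled by $\varphi H_i^\star+\varphi^\star H_i+2\hat\varphi\hat H_i=\tfrac{2}{\alpha^2}\bar H_i$, and the non-cancelling $\bar H_i\bar H_j$ cross term of the symmetric form is absorbed by $\varphi\varphi^\star+\hat\varphi^2=\tfrac{1}{\alpha^2}|\bar f|^2$. One correction to your sketch: for your antisymmetric identity the quadratic cross term vanishes for free, but the auxiliary relation you then need is $\hat\varphi\,(H_i\bar H_j-H_j\bar H_i)+\varphi\,(\bar H_i\hat H_j-\bar H_j\hat H_i)=\varepsilon_k\varphi\bar H_k$, which is not the statement about $\varepsilon_1\bar H_1^2+\varepsilon_2\bar H_2^2+\varepsilon_3\bar H_3^2$ nor the $c$-linear bookkeeping you describe (the $c$-dependence is trivial, since $\hat\varphi_c=\hat\varphi_0+c\varphi$ makes $\mathcal R(\hat f_c)=\mathcal R(\hat f_0)+c\,\mathcal R(f)$ automatically); this relation does hold — it is a consequence of (\ref{phi_relations}) — but it must be established, and isolating it correctly is the one substantive step your outline leaves imprecise.
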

\ \\ \ 
\begin{center}\begin{tikzcd}[column sep=4em]
\bm{\bar{f}} \arrow[rd,no head, dashed,"\mathcal{C}"] \arrow[rdd,no head, dashed,"\mathcal{C}"]\arrow[rddd,no head, dashed,"\mathcal{C}"] \arrow[rrr,dash,"\text{inversion}"] & & & \bm{\bar{f}'} \arrow[ld,no head, dashed,swap,"\mathcal{C}"] \arrow[ldd,no head, dashed,swap,"\mathcal{C}"]\arrow[lddd,no head, dashed,swap,"\mathcal{C}"] \\[-1em]
   & \bm{f} \arrow[d,dash,"\mathcal{C}"] \arrow[r,dash,"\mathcal{R}"] & \bm{\mathcal{R}(f)}\arrow[d,dash,swap,"\mathcal{C}"] &   \\
      & \bm{\hat{f}} \arrow[d,dash,"\mathcal{C}"]\arrow[r,dash,"\mathcal{R}"] & \bm{\widehat{\mathcal{R}(f)}}\arrow[d,dash,swap,"\mathcal{C}"] &   \\
         & \bm{f^{\star}} \arrow[r,dash,"\mathcal{R}"] & \bm{\mathcal{R}(f)^{\star}} &   \\
\end{tikzcd}
\\\small\emph{Figure 4.} Permutability theorem for a Guichard net $f$ with associated system $\hat{f}$ and dual system $f^\star$, where all Ribaucour transformations $\mathcal{R}$ are induced by the $\alpha^2|\bar{f}|^2$-system $\bar{f}$ and $\alpha^2|\bar{f}'|^2$-system $\bar{f}'$, respectively.
\end{center}
\begin{proof}
Firstly observe that, due to Lemma \ref{lemma_comb_ind}, the systems $\mathcal{R}(f)$, $\mathcal{R}(\hat{f}_c)$ and $\mathcal{R}(f_c^\star)$ are again Combescure transformations of each other, because they are induced by the same $\alpha^2|\bar{f}|^2$-system.

Furthermore, let us denote the Lam\'e coefficients of the Guichard net $f$ and an associated system $\hat{f}$ with the corresponding dual system $f^\star$ by $(H_i)_i$, $(\hat{H}_i)_i$ and $(H^\star_i)_i$, respectively. Thus, by Theorem~\ref{Guichard_G_surfaces}, the Lam\'e coefficients satisfy
\begin{equation*}
H_iH_j^\star+H_jH_i^\star=-2\hat{H}_i\hat{H}_j.
\end{equation*}
and we obtain the following relations 
\begin{equation}\label{phi_relations}
\begin{aligned}
\varphi H_i^\star+\varphi^\star H_i+2\hat{\varphi}\hat{H}_i=\frac{2}{\alpha^2}\bar{H}_j,
\\\varphi \varphi^\star + \hat{\varphi}^2=\frac{1}{\alpha^2}|\bar{f}|^2,
\end{aligned}
\end{equation}
where
\begin{equation*}
\begin{aligned}
\varphi=\frac{1}{\alpha^2} \sum \varepsilon_i H_i\bar{H}_i, \ \ 
\hat{\varphi}= \frac{1}{\alpha^2} \sum \varepsilon_i \hat{H}_i\bar{H}_i \ \ \text{and} \ \ \varphi^\star=\frac{1}{\alpha^2} \sum \varepsilon_i H^\star_i\bar{H}_i.
\end{aligned}
\end{equation*}
These functions, together with the functions $\gamma_i=f \cdot N_i$, $\hat{\gamma}_i=\hat{f}\cdot N_i$ and $\gamma^\star_i=f^\star\cdot N_i$, determine the induced Ribaucour transforms $\mathcal{R}(f)$, ${R}(\hat{f}_c)$ and $\mathcal{R}(f_c^\star)$ (see Theorem~\ref{thm_trafo_guichard} and Corollary~\ref{cor_const_backlund}). 

A straightforward computation using the relations (\ref{phi_relations}) then shows that the Lam\'e coefficients satisfy the conditions
\begin{align*}
\mathcal{R}(H_i)\mathcal{R}(H^\star_j)-\mathcal{R}(H_j)\mathcal{R}(H^\star_i)= - 2\mathcal{R}(\hat{H}_i)\mathcal{R}(\hat{H}_j).
\end{align*}
Thus, by Theorem~\ref{Guichard_G_surfaces}, the systems ${R}(\hat{f}_c)$ and $\mathcal{R}(f_c^\star)$ form an associated and a dual system of ${R}(f)$ and the claim is therefore proven.
\end{proof}
%
%
%
%
%
%
\ \\ \bibliographystyle{abbrv}
\bibliography{mybib}
%
%
%
%
%
\end{document}